\documentclass[11pt,reqno]{amsart}
\usepackage{enumitem}

\numberwithin{equation}{section}

\setlength{\oddsidemargin}{0.5 cm}
\setlength{\evensidemargin}{0.5cm}
   \setlength{\textwidth}{15cm}
    \setlength{\topmargin}{-0.0 cm}
\setlength{\textheight}{21cm}
\newtheorem{claim}{Claim}[section]
\newtheorem{theorem}{Theorem}[section]

\newtheorem{proposition}[theorem]{Proposition}
\newtheorem{lemma}[theorem]{Lemma}
\newtheorem{corollary}[theorem]{Corollary}
\newtheorem{remark}[theorem]{Remark}

\def\al{\aligned}
\def\eal{\endaligned}
\def\be{\begin{equation}}
\def\ee{\end{equation}}
\def\lab{\label}
\def\a{\alpha}

\def\e{\epsilon}

\def\M{{\bf M}}

\def\al{\aligned}

\def\p{\partial}
\def\d{\nabla}

\def\k{\kappa}
\def\v{\textrm{Vol}}

\usepackage{soul,color}
\numberwithin{equation}{section}

\usepackage[bindingoffset=0.2in,%
            left=1in,right=1in,top=1in,bottom=1in,%
            footskip=.25in]{geometry}
\usepackage{esint, cancel, ulem, thmbox, mdframed}

\begin{document}

\title[Li-Yau gradient bounds under nearly optimal curvature conditions]{Li-Yau gradient bounds on compact manifolds under nearly optimal curvature conditions}
\author{Qi S. Zhang and Meng Zhu}
\address{Q. S. Z.: Department of Mathematics, University of California, Riverside, Riverside, CA 92521, USA}
\email{qizhang@math.ucr.edu}
\address{M. Z.: Department of Mathematics, Shanghai Key Laboratory of Pure Mathematics and Mathematical Practice, East China Normal University, Shanghai 200241, China}
\address{Department of Mathematics, University of California, Riverside, Riverside, CA 92521, USA}
\email{mzhu@math.ecnu.edu.cn}

\begin{abstract}
We prove Li-Yau type gradient bounds for the heat equation either on manifolds with fixed metric or
under the Ricci flow. In the former case the curvature condition is $|Ric^-| \in L^p$ for some
$p>n/2$, or $\sup_\M \int_\M |Ric^-|^2(y)d^{2-n}(x,y)dy<\infty$,  where $n$ is the dimension of the manifold. In the later case, one only needs scalar curvature being bounded. We will explain why the conditions are nearly optimal and give an application. The Li-Yau bound for the heat equation on manifolds with fixed metric seems to be the first one allowing Ricci curvature not bounded from below.
\end{abstract}
\maketitle
\tableofcontents

\section{Introduction}

Let $(\M^n, g_{ij})$ be an $n$-dimensional complete Riemannian manifold. In \cite{LY:1}, P. Li an S.T. Yau discovered the following celebrated Li-Yau bound, for positive solutions of the heat equation
\be
\lab{heatequation}
\frac{\partial u}{\partial t}=\Delta u.
\ee Suppose  $Ric\geq-K$, where $K\geq0$ and $Ric$ is the Ricci curvature of $\M$. Then any positive solution of \eqref{heatequation} satisfies
\begin{equation}\label{Li-Yau>1}
\frac{|\nabla u|^2}{u^2} - \alpha\frac{u_t}{u} \leq \frac{n\alpha^2K}{2(\alpha-1)}+\frac{n\alpha^2}{2t},\quad \forall \alpha>1.
\end{equation}
In the special case where $Ric\geq 0$, one has the optimal Li-Yau bound
\begin{equation}\label{Li-Yau}
\frac{|\nabla u|^2}{u^2} - \frac{u_t}{u} \leq \frac{n}{2t}.
\end{equation}
In the same paper, many applications of \eqref{Li-Yau>1} and \eqref{Li-Yau} have also been demonstrated by the authors, including the classical parabolic Harnack inequality, optimal Gaussian estimates of the heat kernel, estimates of eigenvalues of the Laplace operator, and estimates of the Green's function. Moreover, \eqref{Li-Yau>1} and
\eqref{Li-Yau} can even imply the Laplacian Comparison Theorem (see e.g. \cite{Chowetc} page 394).

The Li-Yau bound \eqref{Li-Yau>1} was later improved for small time by Hamilton in \cite{Ha:2}, where he proved under the same assumptions as above that
\begin{equation}\label{Hamilton}
\frac{|\nabla u|^2}{u^2} - e^{2Kt}\frac{u_t}{u} \leq e^{4Kt}\frac{n}{2t}.
\end{equation}
Hamilton \cite{Ha:2} further showed a matrix Li-Yau bound for the heat equation. Similar matrix Li-Yau bound was subsequently obtained by Cao-Ni \cite{CaNi} on K\"ahler manifolds.

For the past three decades, many Li-Yau type bounds have been proved not only for
the heat equation, but more generally, for other linear and semi-linear parabolic equations
on manifolds with or without weights.  Let us mention the result by  Bakry and Ledoux \cite{BL} who
derived the Li-Yau bound for weighted manifolds by an ordinary differential inequality
involving the entropy and energy of the backward heat equation. For most recent development, see the papers \cite{CTZ}, \cite{Dav}, \cite{GM},
\cite{LX}, \cite{QZZ}, \cite{Wan}, \cite{WanJ} and the latest \cite{BBG}, \cite{ZZ} and references therein. In all of these results, the essential assumption is that   the Ricci curvature or the corresponding Bakry-Emery Ricci curvature is bounded from below by a constant.
In many situations, it is highly desirable to weaken this assumption.

 Li-Yau bounds have also been extended to situations with moving metrics.
Let $g_{ij}(t)$, $t\in[0, T]$, be a family of Riemannian metrics on $\M$ which solves the Ricci flow:
\begin{equation}\label{RF1}
\frac{\partial }{\partial t}g_{ij}(t)=-2R_{ij}(t),
\end{equation}
where $R_{ij}(t)$ is the Ricci curvature tensor of $g_{ij}(t)$. One may still consider linear and semi-linear parabolic equations under the Ricci flow in the sense that in the heat operator $\frac{\partial}{\partial t}-\Delta$, we have $\Delta=\Delta_t$ which is the Laplace operator with respect to the metric $g_{ij}(t)$ at time $t$. The two most prominent examples are the heat equation
\be
\label{ricciheateq}
  (\Delta-\frac{\partial }{\partial t})u= 0, \, \partial_t g_{ij}= - 2 R_{ij}
  \ee and the conjugate heat equation
 \be
\label{ricciconheateq}
  (\Delta-R + \frac{\partial }{\partial t})u= 0, \, \partial_t g_{ij}= - 2 R_{ij}.
  \ee

The study of Li-Yau bound for heat type equations under the Ricci flow was initiated by Hamilton. In \cite{Ham1988}, he obtained a Li-Yau bound for the scalar curvature along the Ricci flow on 2-sphere. This result was later improved by Chow \cite{Chow}. In higher dimensions, both matrix and trace Li-Yau bounds for curvature tensors, also known as Li-Yau-Hamilton inequalities, were obtained by Hamilton \cite{Ham1993jdg} for the Ricci flow with bounded curvature and nonnegative curvature operator. These estimates played a crucial role in the study of singularity formations of the Ricci flow on three-manifolds and solution to the Poincar\'e conjecture. We remark that
Brendle \cite{Bre} has generalized Li-Yau-Hamilton inequalities under weaker curvature assumptions. The Li-Yau-Hamilton inequality for the K\"ahler-Ricci flow with nonnegative holomorphic bisectional curvature was obtained by H.-D. Cao \cite{Cao}. In addition, in \cite{P:1}, Perelman showed a Li-Yau type bound for the fundamental solution of the conjugate heat equation \eqref{ricciconheateq} under the Ricci flow (see also \cite{Ni2006}).

Recently, there have been many results on Li-Yau bounds for positive solutions of the heat or conjugate heat equations under the Ricci flow. For example authors of \cite{KuZh} and \cite{Cx} proved Li-Yau type bound
for all positive solutions of the conjugate heat equation without any curvature condition, just like
Perelman's aforementioned result for the fundamental solution.
In \cite{CH:1} and \cite{BCP} the authors  proved various Li-Yau type bounds for positive solutions of \eqref{ricciheateq}
under either positivity condition of the curvature tensor or boundedness of the Ricci curvature.  So there is a marked difference between these results on the conjugate
heat equation and the heat equation in the curvature conditions. In view of the absence of curvature
condition for the conjugate heat equation, one would hope that the curvature conditions for the heat equation can be weakened.

Recently, in \cite{BZ}, the authors proved the following gradient estimate for bounded positive solutions $u$ of the heat equation \eqref{ricciheateq},
\begin{equation}\label{BZ}
|\Delta u|+\frac{|\nabla u|^2}{u}-aR\leq \frac{Ba}{t},
\end{equation}
where $R=R(x,t)$ is the scalar curvature of the manifold at time $t$, and $B$ is a constant and $a$ is an upper bound of $u$ on $M\times[0,T]$. Although this result requires no curvature condition and it has some other applications,  it is not a
Li-Yau type bound.

The goal of this paper is to prove Li-Yau bounds for positive solutions for both the fixed metric case  \eqref{heatequation} and  the Ricci flow case
\eqref{ricciheateq} under essentially optimal curvature conditions.

The first theorem is for the fixed metric case, we will have two independent conditions
and two conclusions. The conditions are motivated by different problems such as studying
manifolds with integral Ricci curvature bound and the K\"ahler-Ricci flow. The conclusions
range from long time bound with necessarily worse constants, to short time
bound with better constants.

\begin{theorem}\label{thmLY}
Let $(\M, g_{ij})$ be a compact $n$-dimensional Riemannian manifold, and $u$ a positive solution of \eqref{heatequation}.  Suppose either one of the following conditions holds.

(a)  $\int_\M |Ric^-|^p dy \equiv \sigma <\infty$ for some $p>\frac{n}{2}$, where $Ric^-$ denotes the nonpositive part of the Ricci curvature;  and the manifold is noncollapsed under scale 1, i.e., $|B(x, r)| \ge \rho r^n$ for $0<r\leq 1$ and some
 $\rho>0$;

(b)   $\sup_\M \int_\M |Ric^-|^2 d^{-(n-2)}(x,y)dy \equiv \sigma <\infty$ and the heat kernel of \eqref{heatequation}
satisfies the Gaussian upper bound (which holds automatically under (a)):
\be
\lab{GUP}
G(x, t; y, 0) \le \frac{\hat C(t)}{t^{n/2}} e^{ - \bar c d^2(x, y)/t},  \, t \in (0, \infty)
\ee for some positive constant $\bar c$ and positive increasing function $\hat C(t)$ which grows to infinity as $t\to \infty$.  Here $d(x,y)$ is the distance from $x$ to $y$.\\

\hspace{-.5cm}Then,

(1) for any constant $\alpha\in(0,1)$, we have
\be\label{eqLY}
\a \underline{J}(t) \frac{ |\d u|^2}{u^2} - \frac{\p_t u}{u} \le \frac{n}{(2 - \delta)\a\underline{J}(t)} \frac{1}{t}
\ee
for $t\in (0, \infty)$, where $$\delta=\frac{2(1-\a)^2}{n+(1-\a)^2},$$ and
\be
\underline{J}(t)=\left\{
\al
&2^{-1/(5\delta^{-1}-1)}e^{-(5\delta^{-1}-1)^{\frac{n}{2p-n}}\left[4\sigma\hat C(t)^{1/p}\right]^{\frac{2p}{2p-n}}t},\ \textrm{under condition (a)};\\
&2^{-1/(10\delta^{-1}-2)}e^{-C_0(5\delta^{-1}-1)\sigma \hat C(t)t},\ \textrm{under condition (b)},
\eal\right.
\ee
with $C_0$ being a constant depending only on $n$, $p$ and $\rho$, and $\hat C(t)$ the increasing function on the right hand side of \eqref{GUP}.

(2) in particular, for any $\beta\in(0,1)$, there is a $T_0=T_0(\beta,\sigma, p, n, \rho)$ such that
\be\label{eqLY2}
\beta\frac{ |\d u|^2}{u^2} - \frac{\p_t u}{u} \le \frac{n}{2\beta} \frac{1}{t}
\ee
for $t\in (0, T_0]$. Here $T_0 = c (1-\beta)^{4p/(2p-n)}$ and $c (1-\beta)^{4}$
under conditions (a) and (b), respectively; and $c$ is a positive constant depending only
on the parameters of conditions (a) and (b), i.e., $c=c(\sigma, p, n, \rho) $.

\end{theorem}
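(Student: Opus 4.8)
The plan is to run the classical Li-Yau maximum principle argument on the quantity
$F = t(\alpha J(t)|\nabla \log u|^2 - \partial_t \log u)$ (or a close variant), but with the constant lower Ricci bound $-K$ replaced by the \emph{integral} term $|Ric^-|$, and then to absorb that term using a heat-kernel / Sobolev inequality coming from conditions (a) or (b). Concretely, write $f = \log u$, so $f$ satisfies $\partial_t f = \Delta f + |\nabla f|^2$, and consider $w = \alpha J(t)|\nabla f|^2 - \partial_t f$ where $J(t)$ is a time-dependent "curvature discount" factor to be chosen (this is the mechanism that produces $\underline J(t)$ in the statement). Apply the Bochner formula to $|\nabla f|^2$ together with the evolution equation for $\partial_t f$; the cross terms reassemble, via the usual Cauchy–Schwarz trick $|\Hess f|^2 \ge \frac{1}{n}(\Delta f)^2$ and the splitting of $\Delta f$ into $\partial_t f - |\nabla f|^2$, into a good negative term of size roughly $\frac{\delta}{n}\,t\,w^2$ (the $\delta$ is exactly the loss one pays for using $\alpha<1$ instead of $\alpha>1$), plus one bad term of the form $C\,t\,|Ric^-|\,|\nabla f|^2$ coming from $-Ric(\nabla f,\nabla f)$ in Bochner.

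The heart of the matter — and the step I expect to be the main obstacle — is controlling that bad term. Rather than bounding $|Ric^-|$ pointwise (impossible here), I would integrate the differential inequality for $w$ against the heat kernel $p(x,t;y,s)$ and use the self-improving property: $\int |Ric^-|(y)\, p(x,t;y,s)\,|\nabla f|^2(y,s)\,dy$ is estimated by first peeling off $\sup|\nabla f|^2$-type information and then invoking a parabolic mean-value / Moser iteration in which the potential $|Ric^-|$ enters through the Gaussian bound \eqref{GUP}. Under (a), $|Ric^-|\in L^p$ with $p>n/2$ plus noncollapsing gives a Sobolev inequality and hence a Gaussian upper bound (this should be quoted from an earlier section), and the Kato-type norm of $|Ric^-|$ against the heat kernel over a time interval of length $t$ is controlled by $\sigma^{1/p}\hat C(t)^{1/p} t^{1-n/(2p)}$; under (b) the hypothesis is literally a bound on the elliptic Kato norm $\sup_x\int |Ric^-|^2 d^{2-n}(x,y)\,dy$, which converts directly into a parabolic Kato bound via \eqref{GUP}. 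Feeding these into a Gronwall/Duhamel estimate for the integrated inequality forces the curvature discount factor $J(t)$ to decay exactly like the stated $\underline J(t)$: the exponent $2p/(2p-n)$ in condition (a) and the linear-in-$t$ exponent in condition (b) are the precise outputs of balancing the Kato bound against the good quadratic term.

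For part (2), I would simply observe that as $t\to 0^+$ we have $\underline J(t)\to 2^{-1/(5\delta^{-1}-1)}$-ish but, more importantly, that by choosing $\alpha$ close enough to $1$ we can make $\delta$ small, hence $5\delta^{-1}-1$ large, hence the prefactor $2^{-1/(5\delta^{-1}-1)}\to 1$, and on a short enough interval the exponential factor is also close to $1$; so given $\beta\in(0,1)$ pick $\alpha\in(\beta,1)$ and then choose $T_0$ small enough that $\alpha\underline J(t)\ge \beta$ and $(2-\delta)\alpha\underline J(t)\ge 2\beta$ for $t\le T_0$. Tracking the dependence of $\delta$ on $1-\alpha$ (namely $\delta\sim (1-\alpha)^2/n$) and of the exponential's exponent on $\sigma,\hat C$, the threshold comes out as $T_0 = c(1-\beta)^{4p/(2p-n)}$ under (a) and $c(1-\beta)^4$ under (b), with $c$ depending only on the stated parameters; the factor $4p/(2p-n)$ versus $4$ is inherited from the $2p/(2p-n)$ versus $1$ difference in the exponents of $\underline J$. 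The one technical subtlety to handle carefully is the cutoff/compactness: since $\M$ is compact no spatial cutoff is needed, but one must still justify that $w$ (or $tw$) attains its maximum and that at an interior space-time maximum the elliptic inequality holds in the classical sense, which is standard once $u>0$ is smooth.
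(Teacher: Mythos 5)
Your sketch misreads the structure of the estimate in a way that matters. You take the ``curvature discount'' factor $J$ to be a function of \emph{time only}, $J=J(t)$, and then try to absorb the bad term $\,|Ric^-|\,|\nabla f|^2$ by an a posteriori Duhamel/Gr\"onwall argument applied to the Li--Yau quantity itself, plus a Moser-type iteration. That cannot close: if $J$ is spatially constant, the only counterterm available from $(\Delta-\partial_t)J$ is $-\partial_t J$, and there is no time-dependent multiplier whose derivative dominates $2|Ric^-|J$ pointwise when $|Ric^-|$ is merely $L^p$. The alternative you propose -- integrating the differential inequality for $w$ against the heat kernel and ``peeling off $\sup|\nabla f|^2$'' before invoking Moser iteration -- is circular, because a uniform bound on $|\nabla f|^2$ is exactly the kind of gradient estimate the theorem is meant to deliver, and no such crude a priori bound is available under an integral Ricci assumption. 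Note also that the $\underline J(t)$ appearing in the statement is only the \emph{lower envelope} of an auxiliary function; it is not the function used in the argument.

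The paper instead takes $J=J(x,t)$ to be a genuinely space--time dependent solution of the \emph{nonlinear} problem
\[
\Delta J - 2|Ric^-|J - 5\delta^{-1}\frac{|\nabla J|^2}{J} - \partial_t J = 0,\qquad J(\cdot,0)=1,
\]
designed so that, after the Bochner computation for $Q=\alpha J\frac{|\nabla u|^2}{u^2}-\frac{\partial_t u}{u}$ and the Cauchy--Schwarz absorptions that generate the $|\nabla J|^2/J$ term, the entire coefficient of $|\nabla f|^2$ vanishes identically. Existence of $J$ and the two-sided bound $\underline J(t)\le J(x,t)\le 1$ are then obtained by the substitution $w=J^{-(a-1)}$ with $a=5\delta^{-1}$, which \emph{linearizes} the problem to the Schr\"odinger-type heat equation $\Delta w-\partial_t w+2(a-1)|Ric^-|w=0$, $w(\cdot,0)=1$; Duhamel and Gr\"onwall are applied to this $w$ (not to the Li--Yau quantity), using the Gaussian upper bound of the heat kernel under either (a) or (b), yielding exactly the exponents $2p/(2p-n)$ (resp.\ linear in $t$) that you guessed. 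With $J$ in hand, the maximum-principle step on $tQ$ is the classical one, using $J\le 1$ to make the coefficient of $|\nabla f|^4$ nonnegative after the choice $\delta=\frac{2(1-\alpha)^2}{n+(1-\alpha)^2}$. For part~(2), the paper does not merely send $t\to 0$ in $\underline J$; it re-derives a sharper short-time two-sided bound $(1+\eta)^{-\delta/(5-\delta)}\le J\le 1$ on $[0,T_0]$ via a contraction-mapping argument for $w$, which is where the clean constant $\frac{n}{2\beta}$ and the explicit $T_0=c(1-\beta)^{4p/(2p-n)}$ (resp.\ $c(1-\beta)^4$) come from. So the missing idea in your proposal is the space-dependent auxiliary function solving a nonlinear evolution equation; once that is recognized, the rest of your plan aligns with the paper.
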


\begin{remark}

It is well known that Condition (a) actually implies that the heat kernel of \eqref{heatequation} has a Gaussian upper bound for all time, i.e. \eqref{GUP} holds:
\[
G(x, t; y, 0) \le \frac{\hat C(t)}{t^{n/2}} e^{ - \bar c d^2(x, y)/t},  \, t \in (0, \infty)
\] for some positive constant $\bar c$ and positive increasing function $\hat C(t)$ which grows to infinity as $t\to \infty$. For short time interval $(0,1]$, the function $\hat C(t)$ can be replaced by a constant $\hat C$. This is proven in \cite{TZz:1} Section 2 e.g.. Longer time bound follows from the reproducing formula of heat kernels.
In addition,  a volume upper bound $|B(x, r)| \le C r^n$ follows from Petersen-Wei \cite{PW1}.

The condition $|Ric^-|\in L^p$ for some $p>\frac{n}{2}$ is nearly optimal in the sense that it is not clear $|\nabla u|$ will stay bounded when $|Ric^-|\in L^{\frac{n}{2}}$, which is the well known border
line condition where regularity may fail.

If $Ricci \ge 0$,  one can take $\sigma=0$ and $\alpha \to 1$. Then \eqref{eqLY} becomes the optimal
Li-Yau bound \eqref{Li-Yau}.

Professor Guofang Wei kindly informed us that the noncollapsing condition in (a) may possibly  be removed
by a recent result of Dai-Wei-Z. L. Zhang. In this case the Gaussian upper bound \eqref{GUP} should be replaced by a local version where $t^{n/2}$ is replaced by $|B(x, \sqrt{t})|$ with suitable adjusted parameters.

\end{remark}

\begin{remark}
The conditions in (b) imply that $|Ric^-|^2$ belongs to the Kato class, i.e.,
\[
K(|Ric^-|^2)=\sup_{x\in\M}\int_M \Gamma(x,y)|Ric^-|^2(y)dy<\infty,
\]
where $\Gamma(x,y)$ denotes the Green's function on $\M$.

However, it is not hard to see that the same techniques used in the proof can actually be applied to deal with the cases where $|Ric^-|^p$ is in the Kato class for any $1\leq p<\infty$. Note $p=1$ is the optimal one scaling wise.

The main reason that we are considering the curvature condition in (b) is that it is preserved under the K\"ahler-Ricci flow as proved in \cite{TZq1} and \cite{TZq2}, while it is not clear whether the same property holds if one replaces the power of $|Ric^-|$ by some $p>2$. It is also proven in \cite{TZq1} that the Gaussian bound for the heat kernel holds for each time slice of the normalized K\"ahler Ricci flow with uniform constants. Therefore, the conclusion of part (b) is valid with uniform constants for each time slice along the normalized K\"ahler Ricci flow.
\end{remark}

The Li-Yau bound in the above theorem seems to be the first one allowing Ricci curvature not bounded from below.
Moreover, as an application, we use \eqref{eqLY2} to extend some results in \cite{CoNa} on the parabolic approximations of the distance functions to the case where $|Ric^-|\in L^p$ for some $p>\frac{n}{2}$. The main extension results were first proved in \cite{TZz:1}.\\

Next we turn to the heat equation coupled with the Ricci flow \eqref{ricciheateq} for which we prove
\begin{theorem}\label{thmLYRF}
Let $\M$ be a compact $n$-dimensional Riemmannian manifold, and $g_{ij}(t)$, $t\in [0,T)$, a solution of the Ricci flow \eqref{RF1} on $\M$. Denote by $R$ the scalar curvature of $\M$ at $t$, and $R_1$ a positive constant. Suppose that $-1\leq R\leq R_1$ for all time $t$, and $u$ is a positive solution of the heat equation \eqref{ricciheateq}.
Then, for any $\delta\in[\frac{1}{2},1)$, we have
\begin{equation}\label{LYRF}
\delta\frac{|\nabla u|^2}{u^2}-\frac{\partial_t u}{u} \le \delta\frac{|\nabla u|^2}{u^2}-\frac{\partial_t u}{u}-\alpha R+\frac{\beta}{R+2}\leq
\frac{1}{t} \left(\frac{n}{2\delta}+\frac{4n\beta T}{\delta(1-\delta)} \right)
\end{equation}
for $t\in(0,T)$, where $\alpha=\frac{n}{2\delta(1-\delta)^2}$ and $\beta=\alpha (R_1+2)^2$.

\end{theorem}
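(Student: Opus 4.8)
The plan is a parabolic maximum principle argument. First, the left inequality in \eqref{LYRF} is elementary: writing $\psi(R)=-\alpha R+\frac{\beta}{R+2}$, one checks $\psi\ge 0$ on $[-1,R_1]$, since $\psi(R)\ge\frac{\beta}{R+2}\ge 0$ when $R\le 0$, while for $0<R\le R_1$ the inequality $\beta=\alpha(R_1+2)^2\ge\alpha R_1(R_1+2)\ge\alpha R(R+2)$ gives $\frac{\beta}{R+2}\ge\alpha R$. (One also notes $\psi(R)\le\alpha+\beta\le 2\beta$.) Hence $\delta\frac{|\nabla u|^2}{u^2}-\frac{\partial_t u}{u}\le\delta\frac{|\nabla u|^2}{u^2}-\frac{\partial_t u}{u}+\psi(R)$, so it remains to prove the right inequality, i.e. $P\le\frac1t\big(\frac{n}{2\delta}+\frac{4n\beta T}{\delta(1-\delta)}\big)$, where $P:=\delta\frac{|\nabla u|^2}{u^2}-\frac{\partial_t u}{u}+\psi(R)$.

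Set $f=\log u$, so $\partial_t f=\Delta f+|\nabla f|^2$ along the flow, put $G:=tP$, and write $\Box:=\partial_t-\Delta$. I would first compute $\Box P$ via the Ricci flow evolution equations. Two standard cancellations occur: in Bochner's formula the Ricci term produced by $\partial_t g^{ij}=2R^{ij}$ cancels the one from Bochner, giving $\Box|\nabla f|^2=2\langle\nabla|\nabla f|^2,\nabla f\rangle-2|\Hess f|^2$; and $\partial_t\Delta f=\Delta\partial_t f+2\langle\Ric,\Hess f\rangle$ since $g^{ij}\partial_t\Gamma^k_{ij}=0$ by the contracted Bianchi identity. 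Combined with $\Box R=2|\Ric|^2$ and $\Box\frac{\beta}{R+2}=-\frac{2\beta|\Ric|^2}{(R+2)^2}-\frac{2\beta|\nabla R|^2}{(R+2)^3}$, this yields, after collecting a gradient term into $2\langle\nabla P,\nabla f\rangle$,
\[
\begin{aligned}
\Box P={}&2\langle\nabla P,\nabla f\rangle+2\Big(\alpha+\tfrac{\beta}{(R+2)^2}\Big)\langle\nabla R,\nabla f\rangle-2\delta|\Hess f|^2-2\langle\Ric,\Hess f\rangle\\
&-2\Ric(\nabla f,\nabla f)-2\alpha|\Ric|^2-\tfrac{2\beta|\Ric|^2}{(R+2)^2}-\tfrac{2\beta|\nabla R|^2}{(R+2)^3}.
\end{aligned}
\]

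The heart of the argument — the reason only a bound on $R$ is needed, and the reason for the large constants $\alpha=\frac{n}{2\delta(1-\delta)^2}$, $\beta=\alpha(R_1+2)^2$ and the auxiliary term $\frac{\beta}{R+2}$ — is that $R\le R_1$ forces $\frac{2\beta}{(R+2)^2}\ge 2\alpha$, so the scalar curvature supplies a generous $-4\alpha|\Ric|^2$. This lets me dominate the two terms in which the (not bounded below) Ricci curvature appears: complete the square $-2\delta|\Hess f|^2-2\langle\Ric,\Hess f\rangle=-2\delta|\Hess f+\tfrac1{2\delta}\Ric|^2+\tfrac1{2\delta}|\Ric|^2$, and estimate $-2\Ric(\nabla f,\nabla f)\le 2|\Ric|\,|\nabla f|^2\le 2\alpha|\Ric|^2+\tfrac1{2\alpha}|\nabla f|^4$; the total $|\Ric|^2$ cost $\tfrac1{2\delta}+2\alpha$ is covered by $4\alpha$. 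The cross term $2(\alpha+\tfrac{\beta}{(R+2)^2})\langle\nabla R,\nabla f\rangle$ I absorb using $\tfrac{2\beta|\nabla R|^2}{(R+2)^3}$ at the cost of $\le C_n\beta|\nabla f|^2$ (using $\beta=\alpha(R_1+2)^2$). Then $|\Hess f+\tfrac1{2\delta}\Ric|^2\ge\tfrac1n(\Delta f+\tfrac R{2\delta})^2$, and $-\Delta f=(1-\delta)|\nabla f|^2+P-\psi(R)$ with $0\le\psi(R)\le 2\beta$; together with the identity $\tfrac1{2\alpha}=\tfrac{\delta(1-\delta)^2}{n}$ this makes the quartic $\tfrac1{2\alpha}|\nabla f|^4$ absorbable by the quartic part of $-\tfrac{2\delta}{n}(\Delta f+\tfrac R{2\delta})^2$, the leftover $R$-dependent terms being $\le C_n\beta(|\nabla f|^2+P)+C_n\beta$. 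The upshot, at any point with $\nabla P=0$, is $\Box P\le-\tfrac{2\delta}{n}\big((1-\delta)|\nabla f|^2+P\big)^2+C_n\beta\big(|\nabla f|^2+P\big)+C_n\beta$.

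Finally I run the maximum principle on $\M\times[0,T']$ for $T'<T$: since $\M$ is compact and $G=tP\to0$ as $t\to0^+$, either $G\le0$ there (done, as the right side of \eqref{LYRF} is positive) or $G$ has a positive maximum at $(x_0,t_0)$ with $t_0>0$, where $\nabla P=0$, $\Delta G\le0$, $\partial_t G\ge0$, so $0\le\Box G=P+t_0\Box P$. Plugging in the estimate and writing $y=|\nabla f|^2\ge0$, the $y$–quadratic $-\tfrac{\delta(1-\delta)^2}{n}y^2+\big(C_n\beta-\tfrac{4\delta(1-\delta)}{n}P\big)y$ is $\le0$ once $P\ge\tfrac{C_nn\beta}{4\delta(1-\delta)}$, and then $0\le P+t_0\big(-\tfrac{2\delta}{n}P^2+C_n\beta P+C_n\beta\big)$ forces $t_0P\le\tfrac{n}{2\delta}+\tfrac{C_n'n\beta T}{\delta}$; in the complementary range $t_0P<\tfrac{C_nn\beta T}{4\delta(1-\delta)}$ already. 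With careful bookkeeping of the numerical constants — absorbing all lower-order contributions into the $\tfrac{4n\beta T}{\delta(1-\delta)}$ term, which is where $\delta\in[\tfrac12,1)$ is convenient — this gives $G(x_0,t_0)\le\tfrac{n}{2\delta}+\tfrac{4n\beta T}{\delta(1-\delta)}$, hence $tP\le G(x_0,t_0)$ on $\M\times(0,T']$; letting $T'\to T$ finishes the proof. I expect the only real obstacle to be precisely the control of $-2\Ric(\nabla f,\nabla f)$ and $-2\langle\Ric,\Hess f\rangle$ using only a two-sided scalar curvature bound; the mechanism above — a large negative multiple of $|\Ric|^2$ manufactured from $-\alpha R$ together with $\frac{\beta}{R+2}$ — is exactly what dictates the choices of $\alpha$ and $\beta$, the rest being the usual Li–Yau machinery.
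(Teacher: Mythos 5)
Your proposal is correct and follows essentially the same maximum-principle strategy as the paper: both compute the heat operator of the Li--Yau quantity (your $P$ equals the paper's $F/u$, so working with $tP$ is equivalent to the paper's $Q=tF-\theta u$), and both use complete-square arguments to absorb the two problematic Ricci terms, with the $\tfrac{\beta}{R+2}$ auxiliary term supplying the $\tfrac{2\beta|\nabla R|^2}{(R+2)^3}$ budget needed for the $\langle\nabla R,\nabla f\rangle$ cross term. The main organizational difference is in how the $|\Ric|^2$ budget is allocated. The paper's Lemma 3.1 writes $\mathcal{L}F$ as a sum of three complete squares plus residual: the first square $\tfrac1u\bigl|u_{ij}-\tfrac{u_iu_j}{u}+uR_{ij}\bigr|^2$ absorbs the Hessian--Ricci cross term at a cost of exactly $u|\Ric|^2$ drawn from the $2\alpha u|\Ric|^2$ budget, and the remaining $(2\alpha-1)u|\Ric|^2$ is bundled into a third square $\tfrac1{(2\alpha-1)u}\bigl|(2\alpha-1)uR_{ij}+\tfrac{u_iu_j}{u}\bigr|^2$ to absorb $R_{ij}\tfrac{u_iu_j}{u}$, at the cost of $-\tfrac1{2\alpha-1}\tfrac{|\nabla u|^4}{u^3}$; crucially, the paper then simply \emph{discards} the $\tfrac{2\beta u|\Ric|^2}{(R+2)^2}$ term as a nonnegative extra. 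You instead complete the Hessian square at cost $\tfrac1{2\delta}|\Ric|^2$, use Cauchy--Schwarz on $\Ric(\nabla f,\nabla f)$ at cost $2\alpha|\Ric|^2+\tfrac1{2\alpha}|\nabla f|^4$, and pay for both out of the combined budget $2\alpha+\tfrac{2\beta}{(R+2)^2}\ge 4\alpha$, which is where you use $R\le R_1$. Thus your heuristic that $\tfrac{\beta}{R+2}$ manufactures the needed $-|\Ric|^2$ is a valid alternative reading, but it is not actually what the paper does — in the paper, $R\le R_1$ enters only to make $\beta\ge\alpha(R+2)^2$, ensuring both that $\tfrac{\beta}{(R+2)^3}\ge\tfrac\alpha{R+2}$ (so the $|\nabla R|^2$ budget covers the cross-term squares) and that the residual gradient cost $\alpha(R+2)|\nabla f|^2+\tfrac{\beta}{R+2}|\nabla f|^2$ is bounded by $2\beta|\nabla f|^2$. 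One last caveat: your looser Cauchy--Schwarz absorptions (e.g.\ using $|\mu(R)|\le 3\beta$) will reproduce the stated bound only up to a universal constant in place of $4$; the paper's tighter square decomposition is what lands exactly on $\tfrac{n}{2\delta}+\tfrac{4n\beta T}{\delta(1-\delta)}$, so your claim that ``careful bookkeeping'' gives precisely $4$ should be checked if you want the theorem verbatim.
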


\begin{remark}
Note that the curvature assumption is made only on the scalar curvature rather than on the Ricci or curvature tensor. For the Ricci flow on a compact manifold $\M$, one can always rescale a solution so that the scalar curvature to be bounded from below by $-1$. Under suitable assumptions, the
result in the theorem still holds when $\M$ is complete noncompact.

The Li-Yau bound in the above theorem actually is scaling invariant. Readers can refer to Theorem \ref{thmLYRF1} in section 3 for the corresponding version before rescaling the metrics.
In case the scalar curvature is 0, the Ricci curvature is also $0$ by the maximum principle. Then, by scaling,
we can let $\delta=1$ in the theorem and the bound becomes the optimal Li-Yau bound for Ricci flat case.

\end{remark}

\begin{remark} This theorem clearly implies a Harnack inequality for positive solutions of \eqref{ricciheateq}
if the scalar curvature is bounded.
\end{remark}

This paper is organized as follows: the main Theorems \ref{thmLY} and \ref{thmLYRF} are proved in sections 2 and 3, respectively. The main technical hurdle is to construct certain auxiliary functions to
cancel various curvature terms arising from commutation formulas. For example, in order to
prove Theorem \ref{thmLY}, one needs to deal with the bad term $| Ric^-| \frac{|\nabla u|^2}{u}$.
If one only imposes integral conditions on $|Ric^-|$, then this term can not be bounded by good terms coming out of the Bochner's formula.  The auxiliary functions for Theorem \ref{thmLY}  are obtained by solving
a nonlinear evolution equation, which is used to cancel the bad term.
When proving  Theorem \ref{thmLYRF} in section 3, an additional bad term $< \nabla^2 u, Ric >$
appears.
We will use the good terms coming from the equation of $\frac{\beta}{R}$ to
control it.
In section 4, we deduce from Theorem \ref{thmLY} the extended parabolic approximations of the distance functions.
\medskip

\section{Fixed metric case}

In this section, we work on an $n$-dimensional compact Riemannian manifold $\M$ with a fixed
metric $g$.   For the Ricci curvature, we assume either
\be
\lab{ric-cond}
| Ric^- | \in L^p(\M), \quad p>n/2, \quad \text{or} \quad \sup_{x \in \M}
\int_{\M} \frac{ |Ric^-(y)|^2}{d^{n-2}(x, y)} dy < \infty.
\ee

\hspace{-.5cm}{\bf Proof of Theorem \ref{thmLY}:}
By direct computation, we have
\[
(\Delta  - \p_t) \frac{ |\d u|^2}{u} = \frac{2}{u} \left| u_{ij} - \frac{u_i u_j}{u}
\right|^2 + 2 R_{ij} \frac{u_i u_j}{u}.
\]

Let $J=J(x, t)$ be a smooth positive  function and $\a \in (0, 1)$ be a parameter.   Then
\[
\al
&(\Delta  - \p_t)  \left[ \a J \frac{ |\d u|^2}{u} - \p_t u \right] \\
&= \a \left[ \frac{2}{u} \left| u_{ij} - \frac{u_i u_j}{u}
\right|^2 J + 2 R_{ij} \frac{u_i u_j}{u} J + \Delta J \frac{ |\d u|^2}{u}
+ 2 \d J \cdot \d \frac{ |\d u|^2}{u}  -(\p_t J)  \frac{ |\d u|^2}{u} \right].
\eal
\]  Denote the heat operator $\Delta-\frac{\partial}{\partial t}$ by $\mathcal{L}$. Recall the quotient formula for the heat operator.
\[
\mathcal{L} \left(\frac{F}{G}\right) + 2 \d \ln G \cdot \d \frac{F}{G} =
\frac{\mathcal{L}F}{G} - \frac{F \mathcal{L}G}{G^2}.
\]Take
\[
F= \a J \frac{ |\d u|^2}{u} - \p_t u, \quad G=u,
\] and
\be
\lab{defforQ}
Q \equiv \a J \frac{ |\d u|^2}{u^2} - \frac{\p_t u}{u}.
\ee
We find that
\be
\lab{eqforQ}
\al
&(\Delta  - \p_t) Q
+ 2 \frac{\d u}{u}\cdot \d Q \\
&= \a \left[ \frac{2}{u^2} \left| u_{ij} - \frac{u_i u_j}{u}
\right|^2 J + 2 R_{ij} \frac{u_i u_j}{u^2} J + \Delta J \frac{ |\d u|^2}{u^2}
+
2 \d J\cdot \d \left(\frac{ |\d u|^2}{u} \right) \frac{1}{u}
 -(\p_t J)  \frac{ |\d u|^2}{u^2} \right].
\eal
\ee

Let $f=\ln u$. Using the identities
\[
\frac{1}{u^2} \left| u_{ij}  - \frac{u_i u_j}{u}
\right|^2  = | f_{ij}|^2,
\]
and
\[
\d \left(\frac{ |\d u|^2}{u} \right) \frac{1}{u}
= \d \left(\frac{ |\d u|^2}{u^2} \right) + \frac{ |\d u|^2}{u} \frac{\d u}{u^2}
= \d \left( |\d f|^2 \right)+ \frac{ |\d u|^2}{u} \frac{\d u}{u^2},
\]we can turn (\ref{eqforQ}) into
\be
\lab{eqforQ2}
\al
&(\Delta  - \p_t) Q
+ 2 \frac{\d u}{u}\cdot \d Q \\
&= \a \left[ 2 \left| f_{ij}\right|^2 J + 2 R_{ij} \frac{u_i u_j}{u^2} J
+ \Delta J \frac{ |\d u|^2}{u^2}
+
2 \d J \cdot\d |\d f|^2 + 2 \d J \cdot\frac{\d u}{u} \left(\frac{ |\d u|^2}{u^2} \right)
 -(\p_t J)  \frac{ |\d u|^2}{u^2} \right].
\eal
\ee Observe that, in local coordinates,
\[
2 \d J\cdot \d |\d f|^2 =2 J_i (f^2_j)_i = 4 J_i f_{ji} f_j
\ge - \delta | f_{ij} |^2 J - \frac{| \d J|^2}{J} \, |\d f|^2 4 \delta^{-1}.
\]Therefore, we deduce the following inequality
\be
\lab{ineqforQ}
\al
&(\Delta  - \p_t) Q
+ 2 \frac{\d u}{u}\cdot \d Q \\
&\ge  \a \left[  (2 J - \delta J) \left| f_{ij}\right|^2  - 2 |Ric^-| \frac{|\d u|^2}{u^2} J
+ \Delta J \frac{ |\d u|^2}{u^2}
-\frac{| \d J|^2}{J} \, |\d f|^2 4 \delta^{-1}\right. \\
 &\qquad  \left. -2 |\d J|   \frac{ |\d u|^3}{u^3}
 -(\p_t J)  \frac{ |\d u|^2}{u^2} \right].
\eal
\ee Using the inequality, for any $\delta>0$,
\[
2 |\d J|   \frac{ |\d u|^3}{u^3} =
2 |\d J | \, |\d f|^3  \le \delta J |\d f|^4 + \delta^{-1} \frac{| \d J|^2}{J} \, |\d f|^2,
\]we can turn the above inequality into
\[
\al
&(\Delta  - \p_t) Q
+ 2 \frac{\d u}{u}\cdot \d Q \\
&\ge  \a  (2 J - \delta J) \left| f_{ij}\right|^2  +
\a \left[\Delta J - 2 |Ric^-|  J
-
5\delta^{-1} \frac{| \d J|^2}{J}
 -\p_t J \right] |\d f|^2 - \delta \a J |\d f|^4.
\eal
\] From \eqref{heatequation}, we know
$$\Delta f-\partial_t f=-|\nabla f|^2.$$ Hence,
\be
\lab{ineqtQ}
\al
&(\Delta  - \p_t) ( t Q )
+ 2 \frac{\d u}{u} \cdot\d (t Q)  \\
&\ge  \a t (2 J - \delta J) \frac{1}{n}  \left( | \d  f |^2 - \p_t f \right)^2  +
\a \left[\Delta J - 2 V  J
-
5 \delta^{-1} \frac{| \d J|^2}{J}
 -\p_t J \right]  t |\d f|^2\\
&\qquad  - \delta \a  t J |\d f|^4 -Q,
\eal
\ee where we have written $| Ric^-|=V$.

For any given parameter $\delta>0$ such that $5\delta^{-1}>1$, we make the following
\begin{claim}
\lab{claim}
the problem
\be
\lab{eqforJ}
\begin{cases}
\Delta J - 2 V  J
-
5 \delta^{-1}\frac{| \d J|^2}{J}
 -\p_t J =0, \quad \text{on} \quad {\M} \times (0, \infty);\\
J(\cdot, 0) = 1,
\end{cases}
\ee
has a unique solution for $t\in[0,\infty)$, which satisfies
\be
\underline{J}(t)\leq J(x,t)\leq 1,
\ee
where
\be
\underline{J}(t)=\left\{
\al
&2^{-1/(5\delta^{-1}-1)}e^{-(5\delta^{-1}-1)^{\frac{n}{2p-n}}\left[4\sigma\hat C(t)^{1/p}\right]^{\frac{2p}{2p-n}}t},\ \textrm{under condition (a)};\\
&2^{-1/(10\delta^{-1}-2)}e^{-C_0(5\delta^{-1}-1)\sigma \hat C(t)t},\ \textrm{under condition (b)},
\eal\right.
\ee
with $C_0$ being a constant depending only on $n$, $p$ and $\rho$, and $\hat C(t)$ the increasing function in \eqref{GUP}.

\end{claim}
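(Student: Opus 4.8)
The plan is to transform the fully nonlinear equation \eqref{eqforJ} into a linear parabolic equation by the substitution $J = w^{-\gamma}$ for a suitable exponent $\gamma > 0$ (so that $w \ge 1$ when $0 < J \le 1$), reducing the nonlinear gradient term $5\delta^{-1}|\nabla J|^2/J$ to something that can be absorbed. Computing, $\nabla J = -\gamma w^{-\gamma-1}\nabla w$, $\Delta J = -\gamma w^{-\gamma-1}\Delta w + \gamma(\gamma+1)w^{-\gamma-2}|\nabla w|^2$, and $|\nabla J|^2/J = \gamma^2 w^{-\gamma-2}|\nabla w|^2$. Plugging in, the $|\nabla w|^2$ terms carry the coefficient $\gamma(\gamma+1) - 5\delta^{-1}\gamma^2$, which vanishes exactly when $\gamma + 1 = 5\delta^{-1}\gamma$, i.e. $\gamma = 1/(5\delta^{-1}-1)$ (this is well-defined precisely because $5\delta^{-1} > 1$, which is the standing hypothesis, and it explains the exponent $1/(5\delta^{-1}-1)$ appearing in $\underline J$). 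With this choice the equation for $w$ becomes the \emph{linear} equation $\Delta w - 2V w = \partial_t w$ with $w(\cdot,0) = 1$. First I would record this reduction carefully and note that existence, uniqueness, and positivity of $w$ on $\M \times [0,\infty)$ follow from standard linear parabolic theory on the compact manifold $\M$ (the potential $V = |Ric^-|$ is in $L^p$, $p > n/2$, hence in a suitable Kato-type class, so the Schrödinger semigroup $e^{t(\Delta - 2V)}$ is well-behaved); in particular $w \ge 1$ by the maximum principle since $V \ge 0$ and the initial datum is $1$, giving immediately $J = w^{-\gamma} \le 1$.

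The substantive part is the \emph{upper} bound on $w$, equivalently the lower bound $\underline J(t)$ on $J$. The plan is to write $w$ via the Duhamel/Feynman–Kac representation against the heat kernel $G$ of \eqref{heatequation}: $w(x,t) = 1 + \int_0^t \int_\M G(x,t;y,s)\,2V(y)\,w(y,s)\,dy\,ds$, or more precisely to estimate $\sup_{\M\times[0,t]} w$ by iterating this inequality. The key analytic input is the Gaussian upper bound \eqref{GUP}, which under condition (a) (or is assumed under condition (b)), combined with Hölder's inequality and the volume bound $|B(x,r)| \le Cr^n$ from Petersen–Wei, controls the quantity $\sup_x \int_0^t\int_\M G(x,t;y,s) V(y)\,dy\,ds$. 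Under (a), one estimates $\int_\M G(x,t;y,s)V(y)\,dy \le \|G(x,t;\cdot,s)\|_{L^{p'}} \|V\|_{L^p} \le C(\hat C(t)/(t-s)^{n/2})^{1/p}\,\sigma^{1/p}(t-s)^{n/(2p')}$ type bound after integrating the Gaussian, and the $s$-integral converges because $p > n/2$ makes the singularity integrable; the resulting bound is $\lesssim \sigma^{1/p}\hat C(t)^{1/p} t^{1 - n/(2p)}\cdot(\text{const})$, and a Gronwall/iteration argument turns the linear-in-$w$ Duhamel inequality into the exponential bound, with the precise constants $(5\delta^{-1}-1)^{n/(2p-n)}[4\sigma\hat C(t)^{1/p}]^{2p/(2p-n)}t$ emerging from optimizing the iteration (the exponent $2p/(2p-n)$ is the conjugate-type exponent forced by the scaling $t^{1-n/(2p)}$). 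Under (b) the relevant integral $\sup_x\int_\M V^2(y) d^{-(n-2)}(x,y)\,dy = \sigma < \infty$ together with \eqref{GUP} directly bounds $\int_0^t\int_\M G(x,t;y,s)V(y)\,dy\,ds \lesssim \sigma \hat C(t) t$ via Cauchy–Schwarz and the standard estimate $\int_0^t\int G(x,t;y,s)\,d^{-(n-2)}(x,y)^{-1}\cdots$ — here one uses that the time-integrated heat kernel is comparable to the Green's function $\sim d^{-(n-2)}$ — yielding the second branch of $\underline J(t)$ with constant $C_0 = C_0(n,p,\rho)$.

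The main obstacle I anticipate is making the iteration/Gronwall step fully rigorous and extracting the \emph{sharp} constants displayed in $\underline J(t)$: the Duhamel inequality $w \le 1 + 2V \ast_G w$ does not immediately close by naive Gronwall because the kernel $\int_0^t\int_\M 2G V$ is not a pointwise-in-time bounded function of $t$ in the cleanest way, so one must set up an $L^\infty$ iteration $\sup_{[0,t]} w \le \sum_k (2\kappa(t))^k/k!$-style series where $\kappa(t)$ is the integrated kernel norm — this is what produces the exponential $e^{c\,\kappa(t)}$ rather than a blow-up — and then carefully track how the parameter $\delta$ (through $\gamma = 1/(5\delta^{-1}-1)$, hence through $J = w^{-\gamma}$) enters to give the factors $(5\delta^{-1}-1)^{n/(2p-n)}$ and $(5\delta^{-1}-1)$ in the two cases, and the $2^{-1/(5\delta^{-1}-1)}$ and $2^{-1/(10\delta^{-1}-2)}$ prefactors (the latter coming from the crude bound $w \le 2$ on a short initial interval, raised to the power $-\gamma$, with $10\delta^{-1}-2 = 2(5\delta^{-1}-1)$). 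A secondary technical point is justifying the Feynman–Kac representation and the a priori smoothness needed to run the maximum principle when $V$ is merely $L^p$; this is handled by first solving on a smoothed potential $V_\epsilon$ and passing to the limit, using the uniform estimates just derived.
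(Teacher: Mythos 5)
Your proposal follows essentially the same route as the paper's proof: the power substitution $J=w^{-\gamma}$ with $\gamma=1/(5\delta^{-1}-1)$ chosen to kill the $|\nabla w|^2$ terms (the paper writes this as $w=J^{-(a-1)}$ with $a=5\delta^{-1}$), the maximum principle to get $w\ge1$ hence $J\le1$, and the Duhamel integral representation combined with the Gaussian heat kernel bound and a Gronwall-type iteration to obtain the exponential upper bound for $w$, handling conditions (a) and (b) by different uses of H\"older/Cauchy--Schwarz on the kernel integral. One computational slip to fix: after the substitution the linearized equation is $\Delta w + 2(a-1)Vw - \partial_t w = 0$, i.e.\ with coefficient $2/\gamma = 2(5\delta^{-1}-1)$ on $Vw$ (and with a $+$ sign since $V\ge0$), not $\Delta w - 2Vw = \partial_t w$; this factor of $a-1$ in the Duhamel kernel is exactly what produces the $(5\delta^{-1}-1)$-powers in $\underline J(t)$, which your final paragraph correctly anticipates even though the displayed equation omits it.
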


In the following steps, we will prove the claim.\\

\hspace{-.45cm}{\it step 1.   Conversion into an integral equation.}

Let $a=5\delta^{-1}$, and
\be
\lab{defw}
w = J^{-(a-1)}.
\ee
It is straightforward to check that $w$ satisfies
\be
\lab{eqforw}
\begin{cases}
\Delta w -\p_t w +2(a-1) V w =0 , \quad \text{on} \quad {\M} \times (0, \infty);\\
w(\cdot, 0) =1.
\end{cases}
\ee
Since $V$ is a piece-wise smooth function, \eqref{eqforw} has a long time solution (see e.g. Chapter 6 of \cite{Lieb}).

To show that $J$ exists for all time and derive the bounds for $J$, we derive the bounds for $w$ first. Via the Duhamel's formula, \eqref{eqforw} can be transformed to the following
integral equation,
\be
\lab{inteqforw}
w(x, t) = 1+ 2(a-1)\int^t_0 \int_\M G(x, t; y, s) V(y) w (y, s) dyds.
\ee Here $G(x,t;y,s)=G(y,t;x,s)$ is the heat kernel on $\M$.\\

\hspace{-.45cm}{\it step 2. long time bounds}

Here we prove long time bounds for solutions of \eqref{eqforw}.

Let $w$ be a solution of \eqref{eqforw}. For a lower bound of $w$, we can show that
\be
\lab{w>w0}
w  \ge 1
\ee for any $t>0$.

In fact, let $\e>0$ be a small positive number, which will be taken to $0$ eventually. Then the
function $Z_\e = e^{\e t} w$ satisfies the equation
\be
\lab{eqforwe}
\Delta Z_\e + 2(a-1) V Z_\e
 -\p_t Z_\e = - \e Z_\e.
\ee
First, by continuity, since $w(\cdot, 0)=1$, we know that $w\geq0 $ at least for a short time. Applying the maximum principle on \eqref{eqforw}, we see that \eqref{w>w0} holds
at least for a short time. So $Z_\e > 1$ at least for a short time. We now show that
\be
\lab{ee>w0}
Z_\e > 1
\ee for all time $t>0$ as long as the
solution exists.  Suppose not.  Then there exists a first time $t_0$ and  point $x_0 \in \M$ such that
$Z_\e(x_0, t_0) = 1$. At this point $(x_0, t_0)$,  the following holds
\[
\Delta Z_\e \ge 0, \quad \partial_t Z_\e \le 0, \quad 2(a-1) V Z_\e  \ge 0.
\]This is a contradiction to equation \eqref{eqforwe}.  Letting $\e \to 0$ in \eqref{ee>w0},  we know \eqref{w>w0} holds for all time.

Notice that \eqref{w>w0} implies that $J\leq 1$ as long as the solution exits, which is not obvious to see from \eqref{eqforJ}.

Next, for any fixed $T>0$, we derive an upper bound for $w(x,t)$ on $[0,T]$. We will treat condition (a) and (b) separately.

Let $$m(t)=\sup_{\M\times[0,t]}w(x,s).$$
First, under condition (b), since
\[
\al
w(x, t) &= 1+2(a-1)\int_0^t\int_\M G(x,t;y,s)V(y)w(y,s)dyds\\
&\le 1 +2(a-1)  \left(\int^t_0 \int_\M G(x, t; y, s) V^2(y)dyds\right)^{1/2} \left( \int^t_0 \int_\M G(x, t; y, s) w^2(y,s)dyds\right)^{1/2}\\
& \le  1+  2(a-1)  \left(\int^t_0 \int_\M \frac{\hat C(t)}{(t-s)^{n/2}}e^{-\frac{\bar{c} d^2(x,y)}{t-s}}V^2(y)dyds\right)^{1/2}\left( \int^t_0 \int_\M G(x, t; y, s) m^2(s)dyds\right)^{1/2}\\
&\leq 1+C_0(a-1)\sqrt{\hat C(t)}\left( \int_\M \frac{V^2(y)}{d^{n-2}(x,y)}dy\right)^{1/2}\left(\int^t_0m^2(s)ds\right)^{1/2}\\
&\leq 1+C_0(a-1)\sqrt{\sigma\hat C(T)}\left(\int^t_0m^2(s)ds\right)^{1/2}
\eal
\]
for all $t\in[0,T]$ and $x\in\M$, we have
$$
m(t)\leq 1+C_0(a-1)\sqrt{\sigma\hat C(T)}\left(\int^t_0m^2(s)ds\right)^{1/2},
$$
and hence
$$
m^2(t)\leq 2+2C_0(a-1)^2\sigma\hat C(T)\int^t_0m^2(s)ds,
$$
which is the Gr\"onwall inequality.

Therefore, we get
\begin{align}
m^2(t)\leq 2e^{2C_0(a-1)^2\sigma\hat C(T)t},
\end{align}
i.e.,
\begin{align}
m(t)\leq \sqrt{2}e^{C_0(a-1)^2\sigma\hat C(T)t}.
\end{align}
Especially, we have shown
\begin{align}
w(x,t)\leq \sqrt{2}e^{C_0(a-1)^2\sigma\hat C(t)t},
\end{align}
for any $t\in[0,\infty)$.
From \eqref{defw}, we have
$$ 2^{-1/(2a-2)}e^{-C_0(a-1)\sigma \hat C(t)t}\leq J(x,t)\leq 1.$$

Under the condition (a),
\[
\al
w(x, t) &= 1+2(a-1)\int_0^t\int_\M G(x,t;y,s)V(y)w(y,s)dyds\\
&\le 1 +2(a-1)\int^t_0 \int_\M G(x, t; y, s) V(y) m(s)dyds.
\eal
\]
Thus,
\be\label{m(t)1}
\al
m(t)&\le 1 +2(a-1)\int^t_0 \int_\M G(x, t; y, s) V(y) m(s)dyds\\
&=  1+ 2(a-1)\int^{t-\e}_0 \int_\M G(x, t; y, s) V(y) m(s)dyds\\
&\quad + 2(a-1)\int^t_{t-\e} \int_\M G(x, t; y, s) V(y) m(s)dyds.
\eal
\ee
Notice that
\[
\al
\int_\M G(x, t; y, s) V(y)dy&\leq ||V||_{L^p}\left(\int_\M G^{\frac{p}{p-1}}(x,t;y,s)dy\right)^{(p-1)/p}\\
&= ||V||_{L^p}\left(\int_\M G^{\frac{1}{p-1}}\cdot Gdy\right)^{(p-1)/p}\\
&\leq \sigma\hat C(t)^{1/p} \frac{1}{(t-s)^{\frac{n}{2p}}}.
\eal
\]
Therefore, \eqref{m(t)1} can be further written as
\[
\al
m(t)
&\leq 1+ 2(a-1)\sigma\hat C(T)^{1/p}\int^{t-\e}_0\frac{1}{(t-s)^{\frac{n}{2p}}} m(s)ds+2(a-1)\sigma\hat C(T)^{1/p}m(t)\e^{1-\frac{n}{2p}}.
\eal
\]

Moving the third term on the right hand side to the left hand side, we get
\be
\begin{aligned}\label{m(t)}
\left[1-2(a-1)\sigma\hat C(T)^{1/p}\e^{1-\frac{n}{2p}}\right]m(t)&\leq 1+ 2(a-1)\sigma\hat C(T)^{1/p}\int^{t-\e}_0\frac{1}{(t-s)^{\frac{n}{2p}}} m(s)ds\\
&\leq 1+ 2(a-1)\sigma\hat C(T)^{1/p}\e^{-\frac{n}{2p}}\int^{t-\e}_0m(s)ds.
\end{aligned}
\ee

Setting $$\e=\left(4(a-1)\sigma\hat C(T)^{1/p}\right)^{-\frac{2p}{2p-n}},$$ we have
$$1-2(a-1)\sigma\hat C(T)^{1/p}\e^{1-\frac{n}{2p}}=\frac{1}{2}.$$
Therefore, \eqref{m(t)} becomes
\begin{align*}
m(t)\leq 2+ \left(4(a-1)\sigma\hat C(T)^{1/p}\right)^{\frac{2p}{2p-n}}\int^{t}_0m(s)ds,
\end{align*}
which again is the Gr\"onwall inequality.

Hence, we have
\be
m(t)\leq 2\exp\left\{\left(4(a-1)\sigma\hat C(T)^{1/p}\right)^{\frac{2p}{2p-n}}t\right\},
\ee
for all $t\in[0,T]$. Especially, we have
\be
w(x,t)\leq 2\exp\left\{\left(4(a-1)\sigma\hat C(t)^{1/p}\right)^{\frac{2p}{2p-n}}t\right\},
\ee
for any $t\in[0,\infty)$, i.e.,
$$2^{-1/(a-1)}e^{-(a-1)^{\frac{n}{2p-n}}\left[4\sigma\hat C(t)^{1/p}\right]^{\frac{2p}{2p-n}}t}\leq J(x,t)\leq 1.$$

Therefore, $J$ exists for $t\in [0, \infty)$. This completes the proof of the claim \ref{claim}.\\

Now we continue with the proof of part (1).
In (\ref{ineqtQ}), choosing $J$ as in Claim \ref{claim}, then we deduce
\be
(\Delta  - \p_t) ( t Q )
+ 2 \frac{\d u}{u}\cdot \d (t Q)
\ge  \a t (2 J - \delta J) \frac{1}{n}  \left( | \d  f |^2 - \p_t f \right)^2    - \delta \a  t J|\d f|^4 -Q
\ee For any $T>0$, let $(x_0, t_0)$ be a maximum point of
$t Q = t \left( \a J \frac{ |\d u|^2}{u^2} - \frac{\p_t u}{u} \right)$ in ${\M} \times [0, T]$.
Then at this point, the above inequality induces
\[
0 \ge \a t (2 J - \delta J) \frac{1}{n}  \left( | \d  f |^2 - \p_t f \right)^2
  - \delta \a  t J|\d f|^4 -Q.
\]Clearly we can assume $Q \ge 0$ at $(x_0, t_0)$ since the result is already proven
otherwise.  Then
\[
\left( | \d  f |^2 - \p_t f \right)^2 \ge
 \left( \a J \frac{ |\d u|^2}{u^2} - \frac{\p_t u}{u} \right)^2 + (1-\a J)^2 | \d f|^4.
\]Plugging this into the previous inequality, we find that
\[
0 \ge \a \frac{2 J - \delta J}{n} t Q^2 + \left[  \frac{2-\delta }{n} (1-\a J)^2 - \delta  \right]
\a t J| \d f|^4 - Q.
\] By choosing
\be\lab{alphadelta}
\delta=\frac{2(1-\alpha)^2}{n+(1-\alpha)^2},
\ee one has
\be
\lab{conddelta}
\frac{2-\delta}{n} (1-\a )^2 - \delta  = 0.
\ee
Since $J\leq 1$, we derive from above that
$$\frac{2-\delta}{n} (1-\a J)^2 - \delta  \ge 0\quad \text{on} \quad {\M} \times [0, \infty).$$

Therefore, at $(x_0, t_0)$,
\[
0 \ge \a \frac{2 J - \delta J}{n} t^2 Q^2  - t Q,
\]which infers
\[
t Q \le t Q |_{(x_0, t_0)} \le \frac{n}{(2 - \delta ) \a J}\le\frac{n}{(2 - \delta ) \a \underline{J}(T)},
\]i.e.,
\be
\lab{lyaJ}
\a \underline{J}(t) \frac{ |\d u|^2}{u^2} - \frac{\p_t u}{u} \le \frac{n}{(2  - \delta) \a \underline{J}(t)} \frac{1}{t}.
\ee This proves part (1) of the theorem.\\

For part (2), we first prove an improved short time bound for $J$.

Consider the closed ball in $L^\infty({\M} \times [0, T_0])$
\be
X = \{ w  \in L^\infty({\M} \times [0, T_0])
 \, | \,   1\leq w \le 1+\eta\}.
\ee Here $\eta$ is a positive number in $(0, 1)$, and $T_0$ is a constant to be determined.   Let $w_0=w(\cdot, 0)=1$, and $P$ the map on $X$
\be
\lab{defforP}
P w =  w_0 +2(a-1) \int^t_0 \int_\M G(x, t; y, s) V(y) w(y, s) dyds.
\ee

For any $w \in X$, since $w\geq w_0=1$, we have
$$Pw\geq w_0.$$

Moreover,
\be\label{1}
\al
P w -w_0
&\le 2(a-1)  \int^t_0 \int_\M G(x, t; y, s)  V(y) w(y, s) dyds\\
&\le 2(a-1)(1+\eta) w_0\int^t_0 \int_\M G(x, t; y, s)  V(y)   dyds .
\eal
\ee

Notice that, under the condition (b), we have $\displaystyle \sup_x \int_\M \frac{| Ric^-(y)|^2} {d(x, y)^{n-2}} dy< \infty$. Then by using the Gaussian upper bound of $G$,

\[
\al
&\int^t_0 \int_M G(x, t; y, s)  V(y)   dyds \\
\le&  \left(\int^t_0 \int_\M G(x, t; y, s) dyds \right)^{1/2} \,  \left(\int^t_0 \int_\M G(x, t; y, s) V^2(y) dy \right)^{1/2}\\
\le& C \sqrt{t} \left(\int^t_0 \int_\M \frac{1}{(t-s)^{n/2}} e^{- c d^2(x, y)/(t-s)} V^2(y) dyds \right)^{1/2}\\
\le& C \sqrt{t} \left(\int_\M \frac{1}{d^{n-2}(x, y)}  V^2(y) dy\right)^{1/2} \\
=& C \sqrt{t} \sqrt{\sigma} \equiv C_0 \sqrt{t}.
\eal
\]Hence
\be\label{2}
\int^t_0 \int_\M G(x, t; y, s)  V(y)   dyds \le C_0   \sqrt{t}.
\ee

If $|Ric^-| \in L^p$ with $p>n/2$, the $1/2$ power on $t$ on the right hand side above should be replaced by $1-\frac{n}{2p}$. Here is a quick proof. By Remark 1.2, the heat kernel $G$ also has an
Gaussian upper bound and $|B(x, r)| \le C r^n$. So
\be
\lab{3}
\al
&\int^t_0 \int_\M G(x, t; y, s)  V(y)   dyds \\
\le& C \int^t_0 \int_\M \frac{1}{(t-s)^{n/2}} e^{- c d^2(x, y)/(t-s)} V(y) dyds\\
\le& C  \int^t_0 \left( \int_\M \frac{1}{(t-s)^{np/[2(p-1)]}} e^{- c p/(p-1) \frac{d^2(x, y)}{t-s}}  dy \right)^{(p-1)/p} ds  \, \Vert V \Vert_{L^p}\\
\le& C_0 t^{1-\frac{n}{2p}} .
\eal
\ee

In the following, we prove the theorem under the condition (b), so that \eqref{2} holds. The proof
under the condition (a) works verbatim after replacing \eqref{2} by \eqref{3}.

From \eqref{1} and \eqref{2}, we see that
\[
P w -w_0  \le  C_0(a-1) \sqrt{t} w_0.
\]If we choose
\be
\lab{T0<}
T_0 = \left[ C_0 (a-1) \right]^{-2} \eta^2=\left[ C_0 (5\delta^{-1}-1) \right]^{-2} \eta^2,
\ee then
\be
\lab{Pxx}
P w -w_0  \le \eta  w_0.
\ee Thus $P$ maps $X$ into $X$.

Next we show that $P$ is a contraction mapping on $X$ when $T_0$ is chosen as in \eqref{T0<}.
Let $w_1$ and $w_2$ be two elements in $X$.  Then (\ref{defforP}) implies
\[
\al
&|P w_2 - P w_1|(x, t) =
2(a-1)\left|  \int^t_0 \int_\M G(x, t; y, s) V(y)
\left( w_2 - w_1\right)(y, s) dyds \right|\\
&\le  2(a-1)\int^t_0 \int_\M G(x, t; y, s) V(y) dyds \,
\Vert w_2-w_1 \Vert_\infty\\
& \le C_0 (a-1)\sqrt{t}
\Vert w_2-w_1 \Vert_\infty.
\eal
\] By \eqref{T0<}, we know that under condition (b) of the theorem,
(\ref{Pxx}) holds  and also
\be
\lab{Pcontract}
\Vert P w_2 - P w_1 \Vert_\infty \le \eta   \Vert w_2-w_1 \Vert_\infty.
\ee  Hence $P$ is a contraction map from $X$ to $X$.  The unique fixed point, named
$w$,  is a solution to (\ref{inteqforw}) and (\ref{eqforw}). By the definition of $X$,
we already know that on ${\M} \times [0, T_0]$,
\[
1 \le w \le  1+\eta.
\] From the relations (\ref{defw}), we know that
\[
J = w^{\frac{1}{a-1}}=w^{-\frac{\delta}{5-\delta}}.
\]Hence
\be
\lab{<J<2}
(1+\eta)^{-\frac{\delta}{5-\delta}}\le  J \le    1.
\ee

Let
\be
\lab{beta}
\beta=\frac{n\a}{n+(1-\a)^2} (1+\eta)^{-\frac{\delta}{5-\delta}}.
\ee
Then, \eqref{lyaJ} can be rewritten as
\[\beta\frac{n+(1-\a)^2}{n}\frac{|\nabla u|^2}{u^2}-\frac{\partial_t u}{u}\leq \frac{n}{2\beta}\frac{1}{t},\]
which obviously implies \eqref{eqLY2}.

Moreover, from \eqref{T0<}, \eqref{alphadelta}, and \eqref{beta}, we see that
\be
\lab{T0=3}
T_0 = c (1-\beta)^4
\ee under condition (b) of the theorem.

Similarly, under condition (a), one can get
\be
T_0=[C_0(5\delta-1)]^{-2p/(2p-n)}\eta^{2p/(2p-n)},
\ee
and hence
\be
\lab{T0=4}
T_0 = c (1-\beta)^{4p/(2p-n)}.
\ee

\qed

\medskip

\section{Ricci flow case}

In this section, we consider the Li-Yau bound in the Ricci flow case and prove Theorem \ref{thmLYRF}. The main tool is still the maximum principle applied on a differential inequality involving
Li-Yau type quantity. However, due to the Ricci flow, extra terms involving the Ricci curvature and
Hessian of the solution will come out. In order to proceed we need to create a new term with
the scalar curvature in the denominator.

Before proving the theorem, we carry out some basic computations.

\begin{lemma}\label{F}
Let
\be\label{defF}
F=-\Delta u+\delta\frac{|\nabla u|^2}{u}-\alpha Ru+\frac{\beta u}{R+C},
\ee and operator $\mathcal{L}=\Delta -\frac{\partial }{\partial t}$, where $\delta$, $\alpha$, and $\beta$ are arbitrary constants and $C$ is a constant so that $R+C>0$. Then
\be\lab{eqF}
\begin{aligned}
\mathcal{L}F&= \frac{1}{u}\left|u_{ij}-\frac{u_iu_j}{u}+uR_{ij}\right|^2+\frac{2\delta-1}{u}\left|u_{ij}-\frac{u_iu_j}{u}\right|^2+\frac{1}{(2\alpha-1)u}\left|(2\alpha-1)uR_{ij}+\frac{u_iu_j}{u}\right|^2\\
&\quad -\frac{1}{2\alpha-1}\frac{|\nabla u|^4}{u^3}+\frac{\alpha u}{R+C}\left|\nabla R-\frac{R+C}{u}\nabla u\right|^2+(\frac{\beta}{(R+C)^3}-\frac{\alpha}{R+C})u|\nabla R|^2\\
&\quad -\frac{\alpha (R+C)|\nabla u|^2}{u}+\frac{2\beta u|R_{ij}|^2}{(R+C)^2}+\frac{\beta u}{R+C}\left| \frac{\nabla R}{R+C}-\frac{\nabla u}{u}\right|^2-\frac{\beta|\nabla u|^2}{u(R+C)}.
\end{aligned}
\ee
\end{lemma}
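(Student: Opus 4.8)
The plan is a direct computation: I would apply $\mathcal{L}=\Delta-\partial_t$ to each of the four summands $-\Delta u$, $\delta|\nabla u|^2/u$, $-\alpha Ru$, $\beta u/(R+C)$ of $F$ separately, add the results, and then regroup everything by completing squares into the right-hand side of \eqref{eqF}. The input facts are: $u$ solves \eqref{ricciheateq}, so $\mathcal{L}u=0$; along \eqref{RF1} one has $\partial_t g^{ij}=2R^{ij}$, $\partial_t R=\Delta R+2|R_{ij}|^2$, and the commutation rule $\partial_t(\Delta_t\varphi)=\Delta_t(\partial_t\varphi)+2R_{ij}\nabla_i\nabla_j\varphi$ (the first-derivative terms in $\partial_t\Gamma$ cancel by the contracted second Bianchi identity); the Bochner formula at a fixed time; and the quotient rule $\mathcal{L}(P/Q)+2\nabla\ln Q\cdot\nabla(P/Q)=\mathcal{L}P/Q-P\,\mathcal{L}Q/Q^2$ already used in Section 2.

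First, $\mathcal{L}(\Delta u)=\Delta(\mathcal{L}u)-2R_{ij}u_{ij}=-2R_{ij}u_{ij}$ by the commutation rule, so $\mathcal{L}(-\Delta u)=2R_{ij}u_{ij}$. Next, combining the Bochner formula $\Delta|\nabla u|^2=2|\nabla^2u|^2+2\langle\nabla u,\nabla\Delta u\rangle+2R_{ij}u_iu_j$ with $\partial_t|\nabla u|^2=2R^{ij}u_iu_j+2\langle\nabla u,\nabla\partial_t u\rangle$, the two Ricci contributions cancel and $\mathcal{L}|\nabla u|^2=2|\nabla^2u|^2$; plugging this into the quotient rule with $Q=u$, the resulting cross-term $-2u^{-1}\nabla u\cdot\nabla(|\nabla u|^2/u)$ combines with $2u^{-1}|\nabla^2u|^2$ to give precisely $\mathcal{L}(\delta|\nabla u|^2/u)=\tfrac{2\delta}{u}\bigl|u_{ij}-\tfrac{u_iu_j}{u}\bigr|^2$. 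Finally, from $\partial_t R=\Delta R+2|R_{ij}|^2$ we get $\mathcal{L}R=\mathcal{L}(R+C)=-2|R_{ij}|^2$; the product rule gives $\mathcal{L}(-\alpha Ru)=2\alpha u|R_{ij}|^2-2\alpha\nabla R\cdot\nabla u$, and the quotient rule applied to $u/(R+C)$ (legitimate since $R+C>0$) gives $\mathcal{L}\bigl(\tfrac{\beta u}{R+C}\bigr)=\tfrac{2\beta u|R_{ij}|^2}{(R+C)^2}-\tfrac{2\beta\nabla R\cdot\nabla u}{(R+C)^2}+\tfrac{2\beta u|\nabla R|^2}{(R+C)^3}$.

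Adding the four contributions, the terms involving $u_{ij}$ and $R_{ij}$ are $2R_{ij}u_{ij}+\tfrac{2\delta}{u}|u_{ij}-\tfrac{u_iu_j}{u}|^2+2\alpha u|R_{ij}|^2$; expanding the first three squares on the right of \eqref{eqF} one checks these equal that sum, with the $R_{ij}u_iu_j/u$ terms cancelling in pairs and the $|\nabla u|^4/u^3$ term produced by the third square being removed by the explicit $-\tfrac{1}{2\alpha-1}\tfrac{|\nabla u|^4}{u^3}$ (here $2\alpha-1\ne0$ is needed). The remaining terms $-2\alpha\nabla R\cdot\nabla u-\tfrac{2\beta\nabla R\cdot\nabla u}{(R+C)^2}+\tfrac{2\beta u|\nabla R|^2}{(R+C)^3}+\tfrac{2\beta u|R_{ij}|^2}{(R+C)^2}$ are similarly rewritten: introduce the squares $\tfrac{\alpha u}{R+C}|\nabla R-\tfrac{R+C}{u}\nabla u|^2$ and $\tfrac{\beta u}{R+C}|\tfrac{\nabla R}{R+C}-\tfrac{\nabla u}{u}|^2$, and absorb the leftover $\alpha(R+C)|\nabla u|^2/u$, $\alpha u|\nabla R|^2/(R+C)$, $\beta|\nabla u|^2/(u(R+C))$ and $\beta u|\nabla R|^2/(R+C)^3$ pieces into the coefficients displayed in \eqref{eqF}. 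The only place where any care is required is the cancellation of the Bochner Ricci term against the one coming from $\partial_t g^{ij}=2R^{ij}$ in the second step — this is what makes the identity hold with no stray $R_{ij}u_iu_j$ outside the squares — after which the lemma is pure bookkeeping in completing the square, with $2\alpha-1\ne0$ and $R+C>0$ exactly the conditions making the chosen decomposition valid.
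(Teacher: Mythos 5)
Your proposal is correct and follows essentially the same route as the paper: compute $\mathcal{L}$ of each of the four summands of $F$ using the commutation identity $\partial_t\Delta_t = \Delta_t\partial_t + 2R_{ij}\nabla_i\nabla_j$, the evolution $\partial_t R = \Delta R + 2|R_{ij}|^2$, and the quotient/product rules for $\mathcal{L}$, then regroup the sum into the three $R_{ij}$-squares and the two $\nabla R$-squares with the listed remainder terms. The only cosmetic difference is that you derive $\mathcal{L}|\nabla u|^2 = 2|u_{ij}|^2$ from Bochner plus the cancellation of the two Ricci contributions, while the paper simply states it; the bookkeeping that completes the squares is identical and, as you note, uses $2\alpha - 1\neq 0$ and $R+C>0$ in exactly the places the paper does.
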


\proof

It follows from \eqref{ricciheateq}  that
\begin{align}\label{delta u}
\mathcal{L}(\Delta u)=-2R_{ij}u_{ij},
\end{align}
and
\begin{align}\label{nabla u}
\mathcal{L}(|\nabla u|^2)=2|u_{ij}|^2.
\end{align}
Also, it is well known that under the Ricci flow we have
\begin{equation}\label{R}
\mathcal{L}R=-2|R_{ij}|^2.
\end{equation}
On the other hand, it is straightforward to check that for any smooth functions $f$ and $g$, one has
\begin{align}\label{f/g}
\mathcal{L}(\frac{f}{g})=\frac{1}{g}\mathcal{L}f-\frac{f}{g^2}\mathcal{L}g-\frac{2}{g}\nabla_i\frac{f}{g}\nabla_i g,
\end{align}
and
\begin{align}\label{fg}
\mathcal{L}(fg)=f\mathcal{L}g+g\mathcal{L}f+2\nabla_i f\nabla_i g.
\end{align}
It then follows from \eqref{ricciheateq}, \eqref{nabla u}, \eqref{R}, \eqref{f/g} and \eqref{fg} that
\begin{equation}\label{nabla u/u}
\begin{aligned}
\mathcal{L}\left(\frac{|\nabla u|^2}{u}\right)&=\frac{1}{u}\mathcal{L}|\nabla u|^2-\frac{2}{u}\nabla_i\frac{|\nabla u|^2}{u}\nabla_i u\\
&=\frac{2}{u}|u_{ij}|^2-\frac{4}{u^2}u_{ij}u_iu_j+\frac{2|\nabla u|^4}{u^3}\\
&=\frac{2}{u}|u_{ij}-\frac{u_iu_j}{u}|^2,
\end{aligned}
\end{equation}

\begin{equation}\label{Ru}
\mathcal{L}(Ru)=u\mathcal{L}R+2\nabla_iR\nabla_iu=-2u|R_{ij}|^2+2\nabla_iR\nabla_i u,
\end{equation}
and
\begin{equation}\label{u/R+C}
\begin{aligned}
\mathcal{L}\left(\frac{u}{R+C}\right)&=-\frac{u}{(R+C)^2}\mathcal{L}R-\frac{2}{R+C}\nabla_i\frac{u}{R+C}\nabla_iR\\
&=\frac{2u|R_{ij}|^2}{(R+C)^2}+\frac{2u|\nabla R|^2}{(R+C)^3}-\frac{2}{(R+C)^2}\nabla_i R\nabla_i u.
\end{aligned}
\end{equation}

Thus, by \eqref{defF}, \eqref{delta u}, \eqref{nabla u/u}, \eqref{Ru} and \eqref{u/R+C}, we have, after splitting zeros in four occasions, that
\begin{align*}
\mathcal{L}F&=2R_{ij}u_{ij}+\frac{2\delta}{u}\left|u_{ij}-\frac{u_iu_j}{u}\right|^2+2\alpha u|R_{ij}|^2-2\alpha\nabla_iR\nabla_iu\\
& \quad +\frac{2\beta u|R_{ij}|^2}{(R+C)^2}+\frac{2\beta u|\nabla R|^2}{(R+C)^3}-\frac{2\beta}{(R+C)^2}\nabla_i R\nabla_i u\\
&=\frac{1}{u}\left|u_{ij}-\frac{u_iu_j}{u}+uR_{ij}\right|^2+\frac{2\delta-1}{u}\left|u_{ij}-\frac{u_iu_j}{u}\right|^2+(2\alpha-1)u|R_{ij}|^2+2R_{ij}\frac{u_iu_j}{u}\\
&\quad +\frac{1}{2\alpha-1}\frac{|\nabla u|^4}{u^3}-\frac{1}{2\alpha-1}\frac{|\nabla u|^4}{u^3}-2\alpha\nabla_iR\nabla_iu+\frac{\alpha u|\nabla R|^2}{R+C}+\frac{\alpha (R+C)|\nabla u|^2}{u}\\
&\quad +(\frac{\beta}{(R+C)^3}-\frac{\alpha}{R+C})u|\nabla R|^2-\frac{\alpha (R+C)|\nabla u|^2}{u}+\frac{2\beta u|R_{ij}|^2}{(R+C)^2}-\frac{2\beta}{(R+C)^2}\nabla_i R\nabla_i u\\
&\quad +\frac{\beta u|\nabla R|^2}{(R+C)^3}+\frac{\beta|\nabla u|^2}{u(R+C)}-\frac{\beta|\nabla u|^2}{u(R+C)}.
\end{align*}
Observe that the 3rd, 4th and 5th terms, 7th, 8th and 9th terms and 13th, 14th and 15th
terms form complete squares, respectively.  Hence we get \eqref{eqF}. \qed\\

Now we are ready to prove Theorem \ref{thmLYRF}.\\

\hspace{-.5cm}{\bf Proof of Theorem \ref{thmLYRF}:}
Assume that $1>\delta \geq \frac{1}{2}$ and $\alpha>1$. By choosing $C=2$ and $\beta= \alpha (R_1+2)^2$ in the above lemma, we have
\begin{equation}\label{LF1}
\begin{aligned}
\mathcal{L}F&\geq \frac{1}{nu}\left|\Delta u-\frac{|\nabla u|^2}{u}+uR\right|^2+\frac{2\delta-1}{nu}\left|\Delta u-\frac{|\nabla u|^2}{u}\right|^2 -\frac{1}{2\alpha-1}\frac{|\nabla u|^4}{u^3}\\
&\quad -\frac{\alpha (R+2)|\nabla u|^2}{u}-\frac{\beta|\nabla u|^2}{u(R+2)}.
\end{aligned}
\end{equation}
Notice that
$$\Delta u-\frac{|\nabla u|^2}{u}=\left(\Delta u-\frac{|\nabla u|^2}{u}+uR\right)-uR.$$
We rewrite \eqref{LF1} as
\begin{align*}
\mathcal{L}F&\geq\frac{2\delta}{nu}\left|\Delta u-\frac{|\nabla u|^2}{u}+uR\right|^2-\frac{2(2\delta-1)R}{n}\left(\Delta u-\frac{|\nabla u|^2}{u}+uR\right) +\frac{(2\delta-1)R^2u}{n}\\
&\quad -\frac{1}{2\alpha-1}\frac{|\nabla u|^4}{u^3}-\frac{\alpha (R+2)|\nabla u|^2}{u}-\frac{\beta|\nabla u|^2}{u(R+2)}.
\end{align*}
According to the definition of $F$ in \eqref{defF}, the above inequality becomes
\begin{align*}
\mathcal{L} F &\geq\frac{2\delta}{nu}\left|F+(1-\delta)\frac{|\nabla u|^2}{u}+(\alpha-1)Ru-\frac{\beta u}{R+2}\right|^2\\
&\quad +\frac{2(2\delta-1)R}{n}\left(F+(1-\delta)\frac{|\nabla u|^2}{u}+(\alpha-1)Ru-\frac{\beta u}{R+2}\right)\\
&\quad +\frac{(2\delta-1)R^2u}{n}-\frac{1}{2\alpha-1}\frac{|\nabla u|^4}{u^3}
-\left[\alpha(R+2)+\frac{\beta}{R+2}\right]\frac{|\nabla u|^2}{u}.
\end{align*}
Let $Q=tF-\theta u$. Then at $t=0$, we have $Q<0$. Suppose that at time $t_0>0$ and point $x_0\in \M$, $Q$ reaches $0$ for the first time. Then at $(x_0, t_0)$, we have $t_0 F=\theta u$ and

\begin{align*}
0&\geq t_0\mathcal{L}Q(x_0,t_0)\\
&\geq-\theta u +\frac{2\delta}{nu}\left|\theta u+(1-\delta)\frac{|\nabla u|^2}{u}t_0+(\alpha-1)Rut_0-\frac{\beta ut_0}{R+2}\right|^2\\
&\quad +\frac{2(2\delta-1)Rt_0}{n}\left(\theta u+(1-\delta)\frac{|\nabla u|^2}{u}t_0+(\alpha-1)Rut_0-\frac{\beta u t_0}{R+2}\right)\\
&\quad +\frac{(2\delta-1)R^2u}{n}t_0^2-\frac{1}{2\alpha-1}\frac{|\nabla u|^4}{u^3}t_0^2 -\left[\alpha(R+2)+\frac{\beta}{R+2}\right]\frac{|\nabla u|^2}{u}t_0^2
\end{align*}  After expanding the first square, we deduce
\begin{align*}
&0 \ge -\theta u+\frac{2\delta}{n}\theta^2u+\frac{2\delta(1-\delta)^2}{n}\frac{|\nabla u|^4}{u^3}t_0^2+\frac{2\delta(\alpha-1)^2R^2u}{n}t_0^2\\
&\quad +\frac{2\delta\beta^2u}{n(R+2)^2}t_0^2+\frac{4\delta(1-\delta)\theta}{n}\frac{|\nabla u|^2}{u}t_0+\frac{4\delta(\alpha-1)\theta Ru}{n}t_0-\frac{4\delta\beta \theta u}{n(R+2)}t_0\\
&\quad+\frac{4\delta(1-\delta)(\alpha-1)R}{n}\frac{|\nabla u|^2}{u}t^2_0-\frac{4\delta(1-\delta)\beta}{n(R+2)}\frac{|\nabla u|^2}{u}t_0^2-\frac{4\delta(\alpha-1)\beta R u}{n(R+2)}t_0^2\\
&\quad +\frac{2(2\delta-1)\theta Ru}{n}t_0+\frac{2(2\delta-1)(1-\delta)R}{n}\frac{|\nabla u|^2}{u}t_0^2+\frac{2(2\delta-1)(\alpha-1)R^2u}{n}t_0^2\\
&\quad -\frac{2(2\delta-1)\beta R u}{n(R+2)}t_0^2+\frac{(2\delta-1)R^2u}{n}t_0^2-\frac{1}{2\alpha-1}\frac{|\nabla u|^4}{u^3}t_0^2 -\left[\alpha(R+2)+\frac{\beta}{R+2}\right]\frac{|\nabla u|^2}{u}t_0^2.
\end{align*} This becomes, after combining similar terms,
\begin{align*}
&0 \geq -\theta u+\frac{2\delta}{n}\theta^2u-\frac{4\delta\beta \theta u}{n(R+2)}t_0-\frac{(4\delta\alpha-2)\theta u}{n}t_0-\frac{(4\delta\alpha-2)\beta R_1u}{n(R+2)}t_0^2 \\
&\quad +\left(\frac{2\delta(1-\delta)^2}{n}-\frac{1}{2\alpha-1}\right)\frac{|\nabla u|^4}{u^3}t_0^2\\
&\quad +\left(\frac{4\delta(1-\delta)\theta}{n}t_0-\frac{4\delta(1-\delta)\beta}{n(R+2)}t_0^2-\frac{2\beta}{(R+2)} t_0^2-\frac{(4\delta\alpha-2)(1-\delta)}{n}t_0^2\right)\frac{|\nabla u|^2}{u}.
\end{align*}

It is straightforward to check that by choosing
$$\alpha=\frac{n}{2\delta(1-\delta)^2}, \qquad \textrm{and}\qquad \theta=\frac{n}{2\delta}+\frac{4n\beta T}{\delta(1-\delta)}$$
one has
\begin{equation}
\frac{2\delta(1-\delta)^2}{n}-\frac{1}{2\alpha-1}>0,
\end{equation}
\begin{equation}
\frac{4\delta(1-\delta)\theta}{nT}\geq \left[\frac{4\delta(1-\delta)}{n}+2\right] \beta+\frac{(4\delta\alpha-2)(1-\delta)}{n},
\end{equation}
and
\begin{equation}
\frac{2\delta}{nT^2}\theta^2-(\frac{1}{T^2}+\frac{4\delta\beta }{nT}+\frac{4\delta\alpha }{nT})\theta-\frac{4\delta\alpha\beta R_1 }{n}>0.
\end{equation}

Therefore, we have a contradiction. It follows that
$$-\Delta u+\delta\frac{|\nabla u|^2}{u}-\alpha Ru+\frac{\beta u}{R+2}\leq\frac{\theta u}{t}$$
for any $t\in(0, T)$, which is \eqref{LYRF}.\qed\\

In general, along the Ricci flow we have $$-\sup_\M R(x,0)\leq R(x,t)\leq \sup_{\M \times[0,T)} R(x,t).$$ Denote by $R_1=sup_{\M \times[0,T)}R(x,t)$ and
\begin{equation*}
R_0=\left\{\begin{aligned}&\sup_\M R^{-}(x,0),\ if\  \sup_\M R^{-}(x,0)>0\\
&\inf_\M R(x,0), \ if\  \sup_\M R^{-}(x,0)=0.
\end{aligned}
\right.
\end{equation*}

It is not hard to check that by choosing $C=2R_0$ and $\beta=\alpha(R_1+2R_0)^2$ in Lemma \ref{F} and repeating the proof of Theorem \ref{thmLYRF}, we can get the following scaling invariant Li-Yau bounds.

\begin{theorem}\label{thmLYRF1}
Let $\M$ be a compact $n$-dimensional Riemmanian manifold, and $g_{ij}(t)$, $t\in [0,T)$, a solution of the Ricci flow \eqref{RF1} on $\M$. Suppose that $u$ is a positive solution of the heat equation \eqref{ricciheateq}. Then, for any $\delta\in[\frac{1}{2},1)$, when $\sup_\M R^-(x,0)>0$, we have
\begin{equation}
\delta\frac{|\nabla u|^2}{u^2}-\frac{\partial_t u}{u}-\alpha R+\frac{\beta}{R+2R_0}\leq\frac{\theta}{t},
\end{equation}
and when $\sup_\M R^-(x,0)=0$, we have
\begin{equation}
\delta\frac{|\nabla u|^2}{u^2}-\frac{\partial_t u}{u}-\alpha R+\frac{\beta}{R}\leq\frac{\theta}{t},
\end{equation}
for any $t\in(0,T)$, where $\alpha=\frac{n}{2\delta(1-\delta)^2}$, $\beta=\alpha (R_1+2R_0)^2$, and $\theta=\frac{n}{2\delta}+\frac{4n\beta T}{\delta(1-\delta)R_0}$.
\end{theorem}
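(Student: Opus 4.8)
The plan is to re-run the maximum principle argument from the proof of Theorem \ref{thmLYRF} verbatim, with the two free constants $C$ and $\beta$ in Lemma \ref{F} rescaled so that $R+C$ stays positive along the flow and the resulting estimate becomes invariant under the parabolic rescaling $g\mapsto\lambda g$, $t\mapsto\lambda t$ (under which $R_0$, $R_1$ scale like $\lambda^{-1}$, so that $\theta$ is dimensionless). A preliminary step is to control $R$ along the flow: from $\partial_t R=\Delta R+2|R_{ij}|^2\ge\Delta R+\tfrac2nR^2$ and the scalar maximum principle, $R(\cdot,t)\ge\inf_\M R(\cdot,0)$ for all $t\in[0,T)$, so $R\ge -R_0$ when $\sup_\M R^-(\cdot,0)>0$ and $R\ge R_0>0$ in the degenerate case; combined with $R\le R_1$, this makes $C=2R_0$ admissible, with $R+C=R+2R_0\in[R_0,\,R_1+2R_0]$, and in the degenerate case one may instead take $C=0$ since then $R$ itself is bounded below by a positive constant.

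Fix $\alpha=\frac{n}{2\delta(1-\delta)^2}$, $\beta=\alpha(R_1+2R_0)^2$, and $\theta=\frac{n}{2\delta}+\frac{4n\beta T}{\delta(1-\delta)R_0}$, let $F$ be as in \eqref{defF} with this $C$, and set $Q=tF-\theta u$, so $Q=-\theta u<0$ at $t=0$. If $Q$ ever reached $0$, take $(x_0,t_0)$ with $t_0>0$ to be the first such point; there $Q$ attains a spatial maximum with $\partial_t Q\ge 0$, hence $0\ge t_0\mathcal{L}Q=t_0^2\mathcal{L}F-\theta u$, using $\mathcal{L}u=0$, $\mathcal{L}(tF)=t\mathcal{L}F-F$, and $t_0F=\theta u$. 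Into this I would insert the identity \eqref{eqF}: discard the nonnegative complete–square terms, regroup via $\Delta u-\tfrac{|\nabla u|^2}{u}=(\Delta u-\tfrac{|\nabla u|^2}{u}+uR)-uR$ to absorb the curvature–linear leftover into a square, write $\Delta u-\tfrac{|\nabla u|^2}{u}+uR=F+(1-\delta)\tfrac{|\nabla u|^2}{u}+(\alpha-1)Ru-\tfrac{\beta u}{R+C}$, multiply through by $t_0^2$, and expand the square with $t_0F=\theta u$ substituted in.

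After collecting like terms, the inequality takes the schematic form $0\ge A\,u+B\,\tfrac{|\nabla u|^2}{u}+D\,\tfrac{|\nabla u|^4}{u^3}$, and it suffices that $A>0$, $B\ge 0$, $D\ge 0$. These reduce to exactly three algebraic facts, each forced by the choices of $\alpha$ and $\theta$: (i) $\frac{2\delta(1-\delta)^2}{n}-\frac{1}{2\alpha-1}>0$, immediate from $\alpha=\frac{n}{2\delta(1-\delta)^2}$; (ii) after using $R\le R_1$ and $\frac{\beta}{R+C}\le\alpha(R_1+2R_0)$, the bound $\frac{4\delta(1-\delta)\theta}{nT}\ge\big[\frac{4\delta(1-\delta)}{n}+2\big]\beta+\frac{(4\delta\alpha-2)(1-\delta)}{n}$; and (iii) the quadratic–in–$\theta$ inequality $\frac{2\delta}{nT^2}\theta^2-\big(\frac{1}{T^2}+\frac{4\delta\beta}{nT}+\frac{4\delta\alpha}{nT}\big)\theta-\frac{4\delta\alpha\beta R_1}{n}>0$. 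Since the right-hand side is then strictly positive, we get a contradiction; hence $Q<0$ on $(0,T)$, i.e. $tF<\theta u$, and dividing by $tu$ and using $\Delta u=\partial_t u$ gives $\delta\frac{|\nabla u|^2}{u^2}-\frac{\partial_t u}{u}-\alpha R+\frac{\beta}{R+2R_0}\le\frac{\theta}{t}$ in general, and the version with $\frac{\beta}{R}$ when $C=0$ is used in the degenerate case.

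The analytic content — the Bochner/commutator identity of Lemma \ref{F} and the grouping into complete squares — is already in place, so the main obstacle here is organizational: tracking the numerous cross terms generated by expanding the square when $R$ is only two-sided bounded rather than sign-definite away from $R_0$ and $R_1$, and choosing the scaling-invariant constants $\beta$, $\theta$ so that all three coefficient inequalities hold at once. The only genuinely non-routine point is the case split: confirming that $R+2R_0>0$ is preserved by the flow, and checking that the degenerate regime $\sup_\M R^-(\cdot,0)=0$ — where $\theta\to\infty$ as $R_0\to 0$ and the denominator $R+2R_0$ is replaced by $R$ — is handled consistently via the alternative choice $C=0$.
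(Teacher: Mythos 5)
Your overall strategy matches the paper's: use the scalar maximum principle ($\partial_tR=\Delta R+2|R_{ij}|^2\ge\Delta R$) to certify $R\ge\inf_\M R(\cdot,0)$, so that $C=2R_0$ (resp.\ $C=0$) makes $R+C\ge R_0>0$ along the flow, then re-run the $Q=tF-\theta u$ first-touch argument from Theorem \ref{thmLYRF} with $\beta=\alpha(R_1+2R_0)^2$. That architecture is exactly what the paper intends by ``repeating the proof.'' The problem is that the three ``algebraic facts'' you write down are the coefficient conditions from Theorem \ref{thmLYRF} \emph{as they appear there}, i.e.\ normalized so that $R\ge-1$ and hence $R+C\ge 1$; they do not carry over verbatim, and your own $\theta$ (which has an extra $R_0^{-1}$) would fail to satisfy them.

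Two concrete points. First, the inequality you invoke, $\frac{\beta}{R+C}\le\alpha(R_1+2R_0)$, is backwards: since $R\le R_1$ gives $R+C\le R_1+2R_0$, we have $\frac{\beta}{R+C}\ge\frac{\beta}{R_1+2R_0}=\alpha(R_1+2R_0)$. The useful bound is $\frac{\beta}{R+C}\le\frac{\beta}{R_0}$, coming from $R+C\ge R_0$. Second, tracing the coefficient of $\frac{|\nabla u|^2}{u}$, the terms containing $\frac{\beta}{R+C}$ are bounded by $\frac{\beta}{R_0}$, and the curvature-linear term $\frac{(1-\delta)(4\delta\alpha-2)}{n}R\cdot\frac{|\nabla u|^2}{u}t_0^2$ is bounded from below using $R\ge-R_0$, so it acquires a factor $R_0$. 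The correct analogue of your (ii) is therefore
\[
\frac{4\delta(1-\delta)\theta}{nT}\ \ge\ \left[\frac{4\delta(1-\delta)}{n}+2\right]\frac{\beta}{R_0}\ +\ \frac{(4\delta\alpha-2)(1-\delta)}{n}\,R_0,
\]
and similarly in (iii) the $\frac{4\delta\beta}{nT}$ and $\frac{4\delta\alpha\beta R_1}{n}$ terms each pick up a factor $R_0^{-1}$. Your unscaled (ii) cannot hold for the given $\theta$: since $\frac{4\delta(1-\delta)\theta}{nT}=\frac{2(1-\delta)}{T}+\frac{16\beta}{R_0}$, requiring this to dominate a bare $2\beta$ fails once $R_0>8$. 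With the rescaled RHS the choice of $\theta$ does close the argument (e.g.\ $16\ge\frac{4\delta(1-\delta)}{n}+2$, and $R_1+2R_0\ge R_0$ gives $\beta/R_0\ge\alpha R_0$ so the leftover $R_0$-term is absorbed because $13\alpha(1-\delta)\ge 2$), and this matching of the $R_0^{-1}$ in $\theta$ against the $\beta/R_0$ on the right is precisely the scaling-invariance you invoke at the outset — but you need to actually carry the $R_0$'s through rather than restate the $R_0=1$ conditions.
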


\begin{remark}
From \eqref{BZ}, one can see that if $R>0$, then there are both a forward inequality $u_t\geq -\frac{Ba}{t}$, and a backward inequality $u_t\leq\frac{Ba}{t}$.

The Li-Yau bound \eqref{LYRF} obtained above gives us a stronger forward Harnack inequality $\frac{u_t}{u}\geq -\frac{k}{t}$ when $R>0$. However, it seems that a backward Harnack inequality of the form
$\frac{u_t}{u}\leq \frac{k}{t}$ cannot be expected. Because if this were the case, then one would have
$u(x,t_2)\leq u(x,t_1)(\frac{t_2}{t_1})^k$. Now suppose that $M$ is an Einstein manifold $R_{ij}=\rho g_{ij}$ with $\rho>0$ and $u(x,t)=G(x,t;x_0,0)$ the heat kernel under the Ricci flow. According to a result in \cite{CZ}, we have the Gaussian lower and upper bounds of $G$, i.e.,
$$Ct^{-\frac{n}{2}}e^{-\frac{cd^2_{t}(x,x_0)}{t}}\leq G(x,t;x_0,0)\leq Ct^{-\frac{n}{2}}e^{-\frac{d_t^2(x,y)}{ct}}.$$
It then follows that
$$Ct_2^{-\frac{n}{2}}e^{-\frac{cd^2_{t_2}(x,x_0)}{t_2}}\leq G(x,t_2;x_0,0)\leq G(x,t_1;x_0,0)(\frac{t_2}{t_1})^k\leq C(\frac{t_2}{t_1})^kt_1^{-\frac{n}{2}}e^{-\frac{d_{t_1}^2(x,y)}{ct_1}},$$
i.e.,
$$e^{-\frac{(1-2\rho t_2)d^2_0(x,x_0)}{t_2}+\frac{(1-2\rho t_1)d^2_0(x,x_0)}{t_1}}\leq C(\frac{t_2}{t_1})^{\frac{n}{2}+k}.$$
Obviously, when $t_2=2t_1$ and $x\neq x_0$, we get a contradiction for $t_1$ small enough.
\end{remark}
\medskip

\newcommand\redsout{\bgroup\markoverwith{\textcolor{red}{\rule[0.5ex]{3pt}{0.5pt}}}\ULon}
\section{Applications on extending Colding-Naber result}

In this section, we mainly apply the Li-Yau bound \eqref{eqLY2} to extend parabolic approximations of distance functions of Colding-Naber \cite{CoNa} to the case where $|Ric^-|\in L^{p}$ for some $p>\frac{n}{2}$. This result is a moderate variation of the one in Tian-Z.L. Zhang \cite{TZz:1}. In addition, some of the intermediate results are also proved by replacing the condition that $|Ric^-|\in L^{p}$ by $|Ric|\in K^{2,n-2}$, where $K^{p,\lambda}$ denotes the Kato type space with the norm
$$\|w\|_{K^{p,\lambda}}=\left(\sup_\M\int_\M \frac{|w|^p}{d^{\lambda}(x,y)}dy\right)^{\frac{1}{p}}.$$ We hope that these results will find applications in the study of K\"ahler Ricci flows  which enjoy the property that the condition $|Ric|\in K^{2,n-2}$ is preserved (\cite{TZq2}).

Let $(\M, g_{ij})$ be a compact $n$-dimensional Riemannian manifold.
We list below the following four assumptions, part of which will be used in various results in the section.  In particular, we will present two theorems under the assumptions
\textit{A1} and \textit{A2};  and under the assumptions \textit{A1}, \textit{A3} and \textit{A4} respectively. As explained below, these conditions correspond to the normalized K\"ahler-Ricci flow of dimensions 3 or less and to all dimensions respectively.\\

\textit{A1}: $M$ is $\kappa$-noncollapsed for some constant $\k$, i.e.,
\begin{equation}\label{noncollapsing}
\v(B_r(x))\geq \k r^n,\quad \forall x\in \M,\ \textrm{and}\ r\leq 1.
\end{equation}

\textit{A2}: $||Ric^-||_{L^p}\leq \Lambda$ for some $p>\frac{n}{2}$.

\textit{A3}: $||Ric||_{K^{2,n-2}}\leq \Gamma$, the heat kernel of \eqref{heatequation}
has a Gaussian upper bound for $0<t \le 1$ as in \eqref{GUP}, and  $|B(x, r)| \le C r^n,
\quad \forall x\in \M,\ \textrm{and}\ r\leq 1$.

\textit{A4}: on any geodesic ball $B_r(x)$, there exists a function $\phi\in C_0^{\infty}(B_r(x))$ such that
$\phi\geq0,\ \phi=1\ \textrm{in}\ B_{r/2}(x),$
and
$|\nabla \phi|^2+|\Delta \phi|\leq Cr^{-2}.$\\

\begin{remark}
\lab{rkgoodcut}
Under assumptions \textit{A1} and \textit{A2}, the existence of cut-off functions as in $\textit{A4}$ was first proved by Petersen-Wei \cite{PW2}, where the volume doubling property and laplacian comparison theorem were used. It is known that in  that \textit{A1} (\cite{P:1}) and \textit{A2} (\cite{TZz:1}) hold uniformly on each time slice of the normalized K\"ahler-Ricci flow of complex dimension $3$ and less.

On the other hand, as shown in \cite{BZ} Theorem 1.3 for the  Ricci flow, given the Gaussian upper and lower bound of the heat kernel $G(x,t;y,0)$ and its time derivative, one can also construct a cut-off function $\phi$ such that $0<\phi\leq 1$ in $B_r(x)$, $\phi\geq c>0$ in $B_{r/2}(x)$, and $|\Delta\phi|+|\d\phi|^2\leq Cr^{-2}$ for any $x\in\M$ and $r\leq r_0$. And hence a cut-off function in an annulus as in Lemma \ref{cutoffannulus} can be obtained, and used instead. Although that paper dealt with Ricci flow case, the same method works for fixed manifolds since the cut-off function is constructed from the heat kernel. Moreover, by a simple covering argument, after composition with a one variable function, one can further refine the
cut-off function so that $\phi=1$ in a smaller ball with, say, half of the radius. i.e. It becomes a "good" cut-off function.
\end{remark}

\begin{remark} We mention that conditions \it{A1}, \it{A3}, \it{A4} hold uniformly on each time slice of the normalized K\"ahler-Ricci flow in all dimensions. Indeed, \it{A1} is Perelman's $\kappa$ noncollapsing (\cite{P:1}).
From the papers \cite{TZq1} and \cite{TZq2}, we know \it{A3} holds on each time slice of the normalized K\"ahler-Ricci flow.
In \cite{TZq1}, during the proof of Lemma 2.3, a Gaussian upper and lower bound for the stationary heat kernel of each time slice is proven. Hence by Remark \ref{rkgoodcut}, condition \it{A4} holds, namely a good cut-off function exists.

If one works a little harder, by assuming just the Gaussian upper bound, one can prove the Gaussian lower bound holds under the condition $||Ric||_{K^{2,n-2}}\leq \Gamma$. So {\it A1} and
{\it A3} together actually imply {\it A4}. But we will not seek this generalization here.

\end{remark}

Let $h^{\pm}_t(x)$ be the parabolic approximations of the local distance functions as defined in \eqref{defh}.

The main results of this section are Theorem \ref{distance approximation} and
Theorem \ref{distance approximation 2} below.
As remarked above, the first one works for  the normalized K\"ahler-Ricci flow of dimension 3 and less. The second works for  the normalized K\"ahler-Ricci flow of all dimensions, but the result is weaker in that the bounds are less concrete.

\begin{theorem}\label{distance approximation}
Assume that A1 and A2 are satisfied. Let $O^+$ and $O^-$ be two fixed points in $\M$. Denote by $d_0=d(O^+, O^-)$. Then for some fixed $\delta>0$, there exist constants $C=C(n,p,\k,\Lambda,\delta)$ and $\overline{\e}=\overline{\e}(n,p,\delta)$, such that for any $0<\e\leq\overline{\e}$,
\[
x\in M_{\delta, 2} \equiv \{x \in \M \, | \,
\delta d_0 < d(x, \{O^+, O^-\}) \le 2 d_0 \}
\] with
\[
e(x) \equiv d(O^-,x)+d(O^+,x)-d(O^+,O^-) \leq \e^2d_0,
\] and any $\e$-geodesic $\sigma: [0, d_0]\rightarrow \M$ connecting $O^+$ and $O^-$, there exists $r\in[\frac{1}{2}, 2]$ satisfying\\
(1) $\displaystyle \left|h^{\pm}_{r{\e}^2d_0^2}-d^{\pm}\right|\leq C d_0(\e^2+\e^{2-\frac{n}{2p}})$.\\
(2) $\displaystyle \oint_{B_{{\e}d_0}(x)}\left||\nabla h^{\pm}_{r{\e}^2d_0^2}|^2(y)-1\right|dy\leq C(\e+\e^{1-\frac{n}{2p}})$.\\
(3) $\displaystyle \oint_{\delta d_0}^{(1-\delta)d_0}\oint_{B_{{\e}d_0}(\sigma(s))}\left||\nabla h^{\pm}_{r{\e}^2d_0^2}|^2(y)-1\right|dyds\leq C(\e^2+\e^{2-\frac{n}{p}})$.\\
(4) $\displaystyle \oint_{\delta d_0}^{(1-\delta)d_0}\oint_{B_{{\e}d_0}(\sigma(s))}\left|\nabla^2 h^{\pm}_{r{\e}^2d_0^2}\right|^2(y)dyds\leq \frac{C(1+\e^{-\frac{n}{p}})}{d_0^2}$.

\end{theorem}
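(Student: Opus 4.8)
The plan is to follow the strategy of Colding--Naber \cite{CoNa}, replacing their use of the Laplacian comparison under a two-sided Ricci bound by the Li--Yau estimate \eqref{eqLY2} and the integral curvature machinery (Gaussian heat kernel bounds, volume doubling) available under \textit{A1}--\textit{A2}. Write $d^\pm(x)=d(O^\pm,x)$ and let $h^\pm_t$ be the heat-flow mollification of $d^\pm$ as in \eqref{defh}, i.e. $h^\pm_t(x)=\int_\M G(x,t;y,0)\,d^\pm(y)\,dy$. First I would record the basic a priori controls: $|\nabla d^\pm|\le 1$ a.e., the segment inequality of Cheeger--Colding, and—crucially—an $L^1$-type excess/Laplacian bound for $d^\pm$ coming from the $L^p$ Ricci bound (this is where \cite{TZz:1} does the real work via Petersen--Wei-type estimates: one gets $\int_{B}(\Delta d^\pm)_+\le C(\e+\e^{1-n/(2p)})$-type bounds on the relevant annulus). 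Combined with the noncollapsing \textit{A1} and the volume bound $|B(x,r)|\le Cr^n$ from Remark~1.2, these give the mean-value and Poincaré inequalities on balls $B_{\e d_0}$ that all four assertions are phrased in.

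The heart of the argument is the following identity/inequality chain. For the heat mollification one has $\partial_t h^\pm_t=\Delta h^\pm_t$, and Bochner's formula together with the Li--Yau bound \eqref{eqLY2} applied to the positive solution $u=1+h^\pm_t$ (or directly to a suitable positive solution approximating $d^\pm$, rescaled so that $t=r\e^2d_0^2$ lies in the validity window $T_0=c(1-\beta)^{4p/(2p-n)}$) yields, after integration in $t$, a bound of the form
\begin{equation*}
\oint_{\delta d_0}^{(1-\delta)d_0}\oint_{B_{\e d_0}(\sigma(s))}\int_0^{r\e^2d_0^2}\Big|\nabla^2 h^\pm_t\Big|^2\,dt
\le C\Big(1+\e^{-n/p}\Big)
\end{equation*}
up to the $d_0^{-2}$ scaling; choosing $r\in[\tfrac12,2]$ by an averaging (pigeonhole) argument then removes the time integral and produces a good single slice, which is exactly (4). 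Estimates (1), (2), (3) then follow in the standard Colding--Naber order: (1) from $\|h^\pm_t-d^\pm\|_\infty\le C\|\,|\nabla d^\pm|\,\|\cdot\sqrt t+ (\text{heat kernel tail})\le Cd_0(\e^2+\e^{2-n/(2p)})$ using the off-diagonal Gaussian bound \eqref{GUP} and the excess hypothesis $e(x)\le\e^2 d_0$; (2) from integrating $\partial_t h^\pm_t=\Delta h^\pm_t$ against a cutoff and using $|\nabla h^\pm_t|\le1$ plus the Laplacian $L^1$-bound, giving $\oint_{B_{\e d_0}(x)}\big||\nabla h^\pm_{r\e^2d_0^2}|^2-1\big|\le C(\e+\e^{1-n/(2p)})$; and (3) from averaging (2) along the $\e$-geodesic $\sigma$ and exploiting the quadratic-in-$\e$ gain that comes from the integrated Bochner formula (the $\e^2$ rather than $\e$) together with the improved exponent $n/p$ in place of $n/(2p)$.

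The main obstacle is establishing the Hessian integral bound, i.e. assertion (4), without a pointwise lower Ricci bound. The difficulty is the term $\Ric(\nabla h^\pm_t,\nabla h^\pm_t)$ appearing in Bochner's formula: under only $\Ric^-\in L^p$ this cannot be absorbed pointwise, and one must instead integrate it against the heat kernel in space-time and invoke precisely the heat-kernel/Ricci estimate that underlies Claim~\ref{claim} (the bound $\int_0^t\int_\M G(x,t;y,s)|\Ric^-|(y)\,dyds\le C_0 t^{1-n/(2p)}$, established in \eqref{3}). This is what forces the window $t\le T_0$ and produces the $\e^{-n/p}$ loss. A secondary technical point is matching the rescaling: one works at scale $d_0$ with time $r\e^2d_0^2$, so the curvature condition must be rescaled and the constant $c$ in $T_0$ tracked to confirm $r\e^2d_0^2\le T_0$ for all small $\e$ and admissible $r$; this is routine but must be done carefully to keep the constants in the form $C=C(n,p,\kappa,\Lambda,\delta)$ and $\overline\e=\overline\e(n,p,\delta)$ claimed in the statement. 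The remaining estimates (1)--(3) are then bookkeeping built on the mollification error estimate, the segment inequality, and the integrated Bochner inequality, all of which are available once (4) and the $L^1$-Laplacian bound for $d^\pm$ are in hand.
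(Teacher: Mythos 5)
Your high-level strategy is right — follow Colding--Naber, let the Li--Yau bound (\ref{eqLY2}) stand in for the pointwise Ricci lower bound, and use the Gaussian heat-kernel bounds and Petersen--Wei volume comparison to carry the $L^p$ Ricci hypothesis. You also correctly single out the Hessian estimate (4), the $\mathrm{Ric}(\nabla h,\nabla h)$ term in Bochner, and the pigeonhole over $r\in[\tfrac12,2]$. However, the organization you propose is inverted in a way that does not close, and your sketch of (1) is quantitatively too weak.

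\textbf{The order cannot be (4) first.} In the paper, the time-integrated Hessian bound comes from integrating Bochner against space-time cutoffs and, after integration by parts, the right-hand side contains the term $\frac{C}{d_\e^2}\int\oint_{B_{3d_\e}(\sigma(s))}\big||\nabla h^\pm_t|^2-1\big|$. You need the $L^1$ bound on $\big||\nabla h^\pm_t|^2-1\big|$ along the $\e$-geodesic (i.e. statement (3), via Lemma \ref{L1approximation}) \emph{before} you can derive the integrated Hessian estimate and pigeonhole in $t$. So (4) is downstream of (3), not the other way around, and (1)--(3) certainly do not follow from (4).

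\textbf{The bound in (1) requires the excess heat flow, not a $\sqrt t$ convolution estimate.} Your chain $\|h^\pm_t-d^\pm\|_\infty\le C\sqrt t+\text{tail}$ gives $O(\e d_0)$ for $t\sim\e^2d_0^2$, which is an order of magnitude worse than the claimed $O(d_0\e^2)$. The genuine mechanism is two-sided: Lemma \ref{laplacian upper bound} (the one-sided Laplacian bound $\Delta h^-_t\le C(d_0^{-1}+t^{-n/(4p)})$) gives the upper bound on $h^-_t-d^-$ after integrating in time; the lower bound comes from the identity $h^-_t-d^-=h^+_t-d^++e_t-e$ together with a bound on the \emph{excess heat flow} $e_t=h^-_t-h^+_t$. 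This is exactly where the Li--Yau bound enters: it is applied to the nonnegative solution $e_t$ (positivity follows from $e_0=\phi e\ge 0$ and the maximum principle), yielding the Harnack inequality and the bounds $|e_t|$, $|\nabla e_t|$, $|\partial_t e_t|$ of Lemma 4.9. Applying Li--Yau to $u=1+h^\pm_t$ would not work: $h^+_t$ need not be bounded below, and even if it were, adding a constant destroys the scaling of the Li--Yau quotient, so you do not recover the $e(x)$-dependent bound that is crucial to getting from $\sqrt t$ down to $\e^2 d_0$.

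\textbf{Estimates (2), (3) need more than averaging.} In the paper they come from the parabolic $L^1$ mean-value inequality applied to $w_t=1+Ct/d_0^2+Ct^{1-n/(2p)}-|\nabla h^-_t|^2\ge 0$ (Lemma \ref{c1approximation}), plus a telescoping of $\int_\sigma\nabla_{\dot\sigma}h^-_{2t}$ against the $C^0$ approximation (Lemma \ref{c0approximation}). The extra $\e$ power in (3) relative to (2) is produced by integrating over the full length $\sim d_0$ of $\sigma$ and using the $C^0$-closeness at both endpoints, not by "averaging (2) along $\sigma$." In short: the missing ingredients are the excess flow $e_t$, the one-sided Laplacian estimate for $h^\pm_t$, and the $L^1$ Harnack/mean-value lemma; without these, neither the $\e^2$ gain in (1) nor the structure needed to even state the integrated bound for (4) is available.
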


More explanations of the notations in the above theorem can be found in the following context.

\begin{remark} Theorem \ref{distance approximation} was first obtained by Tian-Z. Zhang in \cite{TZz:1}. But the exponents on the right hand sides of (3) and (4) are slightly different (comparing to Theorem 2.25 in \cite{TZz:1}).

\end{remark}

If $|Ric^-|\in K^{2,n-2}$, due to the absence of volume doubling property and heat kernel Gaussian bounds, to prove a similar result as in Theorem \ref{distance approximation}, additional assumptions need to be imposed. More precisely, we can prove

\begin{theorem}\label{distance approximation 2}
Assume that A1, A3, and A4 are satisfied. Then for some fixed $\delta>0$, any $q>0$ and $\lambda>n-2q$, there exist constants $C=C(n,q,\lambda,\k,\Gamma,\delta)$ and $\overline{\e}=\overline{\e}(n,\delta)$, such that for any $0<\e\leq\overline{\e}$,
$x\in M_{\delta, 2}$ with
$e(x)\leq \e^2d_0$, $d_0=d(O^+,O^-)$ and any $\e$-geodesic $\sigma: [0, d_0]\rightarrow \M$ connecting $O^+$ and $O^-$, there exists $r\in[\frac{1}{2}, 2]$ satisfying\\
(1) $\displaystyle \left|h^{\pm}_{r{\e}^2d_0^2}-d^{\pm}\right|\leq C d_0(\e^2+||\psi^{\pm}||_{K^{q,\lambda}}\e^{2-\frac{n-\lambda}{q}}d_0^{2-\frac{n-\lambda}{q}})$.\\
(2) $\displaystyle \oint_{B_{{\e}d_0}(x)}\left||\nabla h^{\pm}_{r{\e}^2d_0^2}|^2(y)-1\right|dy\leq C(\e+||\psi^{\pm}||_{K^{q,\lambda}}\e^{1-\frac{n-\lambda}{q}}d_0^{1-\frac{n-\lambda}{q}})$.\\
(3) $\displaystyle \oint_{\delta d_0}^{(1-\delta)d_0}\oint_{B_{{\e}d_0}(\sigma(s))}\left||\nabla h^{\pm}_{r{\e}^2d_0^2}|^2(y)-1\right|dyds\leq C(\e^2d_0+\e d_0^2+||\psi^{\pm}||_{K^{q,\lambda}}\e^{2-\frac{n-\lambda}{q}}d_0^{2-\frac{n-\lambda}{q}})$.\\
(4) $\displaystyle \oint_{\delta d_0}^{(1-\delta)d_0}\oint_{B_{{\e}d_0}(\sigma(s))}\left|\nabla^2 h^{\pm}_{r{\e}^2d_0^2}\right|^2(y)dyds\leq \frac{C(1+\e^{-1}+||\psi^{\pm}||_{K^{q,\lambda}}\e^{-\frac{n-\lambda}{q}})}{d_0^2}$.

\end{theorem}

In the following, we mainly present the proof of Theorem \ref{distance approximation}. The proof of Theorem \ref{distance approximation 2} is similar. The proof essentially follows the arguments in section 2 of \cite{CoNa}. Thus, in the context below, we first just present most of the corresponding intermediate steps without proofs. For the $|Ric^-|\in L^{p}$ case, we start from the volume comparison theorem proved by Petersen-Wei.

\begin{theorem}{(Petersen-Wei \cite{PW1})}\label{volumecomparison}
If A2 is satisfied, then there exists a constant $C=C(n,p)$ which is nondecreasing in $R$ such that for all $r\leq R$ and $x\in \M$, we have
\begin{equation}
\left(\frac{\v(B_R(x))}{R^n}\right)^{1/2p}-\left(\frac{\v(B_r(x))}{r^n}\right)^{1/2p}\leq C\Lambda^{1/2p}R^{1-\frac{n}{2p}},
\end{equation}
where $B_r(x)$ denotes the geodesic ball centered at $x$ with radius $r$.
\end{theorem}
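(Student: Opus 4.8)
The plan is to recall the proof of the relative volume comparison of Petersen and Wei \cite{PW1}: one establishes a pointwise comparison for the area element of geodesic spheres along radial geodesics, in which the deviation from the Euclidean model is controlled by a radial integral of $|\Ric^-|$, and then integrates this defect over the ball, invoking H\"older's inequality to bring in $\|\Ric^-\|_{L^p}$. Concretely, fix $x\in\M$ and write the Riemannian volume element in geodesic polar coordinates at $x$ as $\mathcal A(r,\theta)\,dr\,d\theta$, $\theta\in S^{n-1}$, where $\mathcal A(\cdot,\theta)$ is defined on $[0,\tau(\theta))$ with $\tau(\theta)$ the cut distance and is continued by $0$ afterwards. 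With $m(r,\theta)=\partial_r\log\mathcal A(r,\theta)$ the mean curvature of the geodesic sphere, the Riccati equation and the estimate $|A|^2\ge m^2/(n-1)$ for the shape operator $A$ give, on $\{\mathcal A>0\}$,
\[
m'+\frac{m^2}{n-1}\le-\Ric(\partial_r,\partial_r)\le|\Ric^-|,
\]
while the flat model $\bar m(r)=(n-1)/r$, $\bar{\mathcal A}(r)=r^{n-1}$ satisfies the corresponding identity with zero on the right-hand side.

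Next I would control the excess $\varphi:=(m-\bar m)_+$. Subtracting the two Riccati relations, on $\{\varphi>0\}$ one gets $\varphi'+\tfrac2r\varphi+\tfrac{\varphi^2}{n-1}\le|\Ric^-|$, hence $(r^2\varphi)'\le r^2|\Ric^-|$ and therefore
\[
\varphi(r,\theta)\le\frac1{r^2}\int_0^r s^2|\Ric^-|(s,\theta)\,ds .
\]
This must be read in the barrier (distributional) sense across the cut locus, where $\mathcal A$, continued by zero, can only jump downward; justifying it there by Calabi's trick of perturbing the base point is the one genuinely delicate point, and this is where I expect the main obstacle to be. From $\frac{d}{dr}(\mathcal A/\bar{\mathcal A})=(\mathcal A/\bar{\mathcal A})(m-\bar m)\le(\mathcal A/\bar{\mathcal A})\,\varphi$ the normalized area ratio grows at a rate governed by $\varphi$; multiplying the defect inequality for $\varphi$ by suitable powers of $\varphi$ and integrating, the quadratic term in the Riccati relation becomes coercive, and H\"older's inequality with exponent $p$ brings in $\|\Ric^-\|_{L^p(B_R(x))}\le\Lambda$.

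Finally I would integrate in $r$ over $[r,R]$ and in $\theta$ over $S^{n-1}$, passing to the $2p$-th root $(\mathcal A/\bar{\mathcal A})^{1/2p}$ so that the estimate becomes additive in the quantities $(\v(B_R(x))/R^n)^{1/2p}$ and $(\v(B_r(x))/r^n)^{1/2p}$; the leftover integral of the purely metric weight over $[0,R]$ produces the factor $R^{1-\frac n{2p}}$ together with a constant $C=C(n,p)$, which is nondecreasing in $R$ because that weight is positive and its integral increases with $R$. Assembling the pieces gives
\[
\left(\frac{\v(B_R(x))}{R^n}\right)^{1/2p}-\left(\frac{\v(B_r(x))}{r^n}\right)^{1/2p}\le C\,\Lambda^{1/2p}\,R^{1-\frac n{2p}},
\]
which is the asserted inequality. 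Apart from the cut-locus step flagged above, the argument is a careful but routine combination of the Riccati comparison, an integration in polar coordinates, and H\"older's inequality.
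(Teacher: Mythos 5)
The paper does not prove this statement; it is quoted verbatim from Petersen--Wei \cite{PW1} as a background tool, so there is no ``paper's own proof'' to compare against. Your reconstruction does faithfully recall the Petersen--Wei argument: the Riccati inequality for the mean curvature $m$ of geodesic spheres, the positive defect $\varphi=(m-\bar m)_+$ against the Euclidean model, the Calabi-trick justification across the cut locus, the use of the quadratic term $\varphi^2/(n-1)$ to absorb $\varphi^{2p}$ after multiplying by $\varphi^{2p-2}$, H\"older's inequality to bring in $\|\Ric^-\|_{L^p}$, and finally the differential inequality for the $2p$-th root of the normalized volume ratio that integrates to the stated bound.

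One caution: the pointwise bound $\varphi(r,\theta)\le r^{-2}\int_0^r s^2|\Ric^-|(s,\theta)\,ds$ you derive by discarding the quadratic term is not what closes the argument. If you only keep that linear estimate, H\"older on the radial integral produces an $L^q$ norm of $|\Ric^-|$ along rays, not the $L^p(\M)$ norm you want; the entire point of Petersen--Wei's Lemma 2.1 is that you must \emph{retain} the coercive $\varphi^2$ term when you multiply by $\varphi^{2p-2}$ and integrate, so that $\int\varphi^{2p}$ appears on the left and Young/H\"older on the right trades $|\Ric^-|\varphi^{2p-2}$ for $\|\Ric^-\|_{L^p}$. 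You do mention this correct mechanism in the following sentence, so the outline is sound, but the pointwise bound as stated is a detour that would not suffice on its own and is best omitted. With that clarification, the sketch agrees with the cited Petersen--Wei proof.
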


A very important corollary of the above theorem is the following volume doubling property (see \cite{PW2} Theorem 2.1).

\begin{theorem}{(Petersen-Wei \cite{PW2})}
Given $\alpha<1$ and $p>n/2$. Assume that A1 and A2 are satisfied. Then there exists an $R=R(\alpha, p, n, \Lambda)>0$ such that for any $0<r_1\leq r_2\leq R$, we have
\begin{equation}\label{Lpvolumedoubling}
\alpha\frac{r_1^n}{r_2^n}\leq \frac{\textrm{vol}\, B_{r_1}(x)}{\textrm{vol}\, B_{r_2}(x)}.
\end{equation}

\end{theorem}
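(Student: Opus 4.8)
The plan is to derive this volume doubling property directly from the quantitative volume comparison of Theorem~\ref{volumecomparison} (Petersen--Wei) and the noncollapsing hypothesis A1, with no further geometric input. The key observation is that if we set $V(r) = \vol B_r(x)/r^n$, then the asserted inequality
\[
\alpha\,\frac{r_1^n}{r_2^n} \le \frac{\vol B_{r_1}(x)}{\vol B_{r_2}(x)}
\]
is, after cancelling the factor $r_1^n/r_2^n$, exactly the statement that $V(r_1) \ge \alpha\,V(r_2)$ for all $0 < r_1 \le r_2 \le R$. So it suffices to find $R = R(\alpha,p,n,\Lambda) \in (0,1]$ making this hold; restricting a priori to $R \le 1$ keeps the constant $C = C(n,p)$ appearing in Theorem~\ref{volumecomparison} — which is only nondecreasing in the radius argument — under control by $C(n,p,1)$.

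First I would apply Theorem~\ref{volumecomparison} to the pair $r = r_1 \le R = r_2$ (both $\le 1$), which gives
\[
V(r_2)^{1/2p} - V(r_1)^{1/2p} \le C\Lambda^{1/2p}\, r_2^{1-\frac{n}{2p}} \le C\Lambda^{1/2p}\, R^{1-\frac{n}{2p}}.
\]
Next, the noncollapsing assumption A1 supplies the crucial uniform lower bound: for $r_1 \le 1$ one has $\vol B_{r_1}(x) \ge \kappa\, r_1^n$, i.e. $V(r_1) \ge \kappa$, hence $V(r_1)^{1/2p} \ge \kappa^{1/2p}$. Dividing the previous display by $V(r_1)^{1/2p}$ and using these bounds,
\[
\frac{V(r_2)^{1/2p}}{V(r_1)^{1/2p}} \le 1 + \frac{C\Lambda^{1/2p}}{\kappa^{1/2p}}\, R^{1-\frac{n}{2p}},
\]
and raising both sides to the power $2p$,
\[
\frac{V(r_2)}{V(r_1)} \le \left(1 + \frac{C\Lambda^{1/2p}}{\kappa^{1/2p}}\, R^{1-\frac{n}{2p}}\right)^{2p}.
\]

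Finally, since $p > n/2$ the exponent $1 - \frac{n}{2p}$ is strictly positive, so the right-hand side decreases to $1$ as $R \to 0^+$. Thus, given any $\alpha < 1$, we may fix $R = R(\alpha,p,n,\Lambda) \in (0,1]$ small enough that this last quantity is at most $\alpha^{-1}$ — concretely, any $R$ with $C\Lambda^{1/2p}\kappa^{-1/2p}\, R^{1-n/(2p)} \le \alpha^{-1/2p} - 1$ will do — and then $V(r_1) \ge \alpha\, V(r_2)$ for all $0 < r_1 \le r_2 \le R$, which is the claim. There is no real obstacle here beyond bookkeeping: the two points requiring care are to cap $R$ at a fixed scale so that the $R$-dependence of $C$ in Theorem~\ref{volumecomparison} is harmless, and to invoke A1 at scale $r_1 \le 1$ to anchor the estimate by $V(r_1) \ge \kappa$ — without a lower volume bound the quotient $V(r_2)/V(r_1)$ cannot be controlled, since the comparison theorem bounds only a difference of (powers of) volume ratios, not their ratio.
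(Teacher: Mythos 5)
The paper does not prove this result itself; it is cited verbatim from Petersen--Wei~\cite{PW2}, Theorem~2.1. So there is no in-paper proof to compare against, and your contribution is a self-contained reconstruction from the ingredients the paper does provide (Theorem~\ref{volumecomparison} and A1). As such, the argument is correct: rewriting the claim as $V(r_1)\ge\alpha V(r_2)$ with $V(r)=\vol B_r(x)/r^n$, invoking Theorem~\ref{volumecomparison} at radii $r_1\le r_2\le R\le1$ to bound $V(r_2)^{1/2p}-V(r_1)^{1/2p}$, anchoring $V(r_1)^{1/2p}\ge\kappa^{1/2p}$ via A1, and then shrinking $R$ so that $\bigl(1+C\Lambda^{1/2p}\kappa^{-1/2p}R^{1-n/(2p)}\bigr)^{2p}\le\alpha^{-1}$ is exactly the right bookkeeping, and the restriction $R\le1$ correctly neutralizes the monotone-in-$R$ constant from Theorem~\ref{volumecomparison}. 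You are also right that the noncollapsing hypothesis is not cosmetic here: the comparison inequality controls only a \emph{difference} of $(2p)$-th roots of volume ratios, and without a lower bound on $V(r_1)$ there is no way to pass to the quotient $V(r_2)/V(r_1)$.

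One small but genuine discrepancy worth flagging: your construction produces $R=R(\alpha,p,n,\Lambda,\kappa)$, whereas the statement as quoted in the paper writes $R=R(\alpha,p,n,\Lambda)$, suppressing the dependence on the noncollapsing constant $\kappa$. That suppression cannot be recovered by your route --- your explicit threshold $C\Lambda^{1/2p}\kappa^{-1/2p}R^{1-n/(2p)}\le\alpha^{-1/(2p)}-1$ depends on $\kappa$ in an essential way, since $\kappa$ in A1 is a free parameter unrelated to $\Lambda$. This is most plausibly a notational omission in the paper's citation (or a difference in normalization conventions in~\cite{PW2}, where the integral curvature is rescaled and the collapsing scale is fixed differently), but you should state the dependence honestly rather than match the paper's wording.
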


\begin{remark}
For $|Ric^-|\in K^{2,n-2}$, it is not known whether the volume comparison theorem holds. Thus, we need to assume $|B(x, r)| \le C r^n$ in {\it A3}, which combining with {\it A1} provides the volume doubling property.
\end{remark}

By using the above theorem, Petersen-Wei also obtained the following cut-off function, which was first observed by Cheeger-Colding in \cite{ChCo} for manifolds with Ricci curvature bounded from below.
\begin{lemma}{(Petersen-Wei \cite{PW2})}\label{cutoffball}
Suppose that A1 and A2 are satisfied. There exist $r_0=r_0(n,p,\k,\Lambda)$ and $C=C(n,p,\k,\Lambda)$ such that on any geodesic ball $B_r(x)$, $r\leq r_0$, there exists a function $\phi\in C_0^{\infty}(B_r(x))$ such that
$$\phi\geq0,\quad\ \phi=1\ \textrm{in}\ B_{r/2}(x),$$
and
$$|\nabla \phi|^2+|\Delta \phi|\leq Cr^{-2}.$$
\end{lemma}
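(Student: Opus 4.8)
The plan is to recall the Cheeger--Colding cutoff construction \cite{ChCo}, whose adaptation to the integral hypotheses A1 and A2 is the content of \cite{PW2}. The analytic inputs are the scale--invariant volume doubling property \eqref{Lpvolumedoubling} and the resulting local Neumann--Poincar\'e and Sobolev inequalities, all valid on balls of radius at most some $r_0=r_0(n,p,\k,\Lambda)$; I would fix such an $r_0$ and take $r\le r_0$, so that these estimates hold on $B_{2r}(x)$ with constants depending only on $n,p,\k,\Lambda$. The aim is to build a function $b$ with $b\equiv 1$ on $B_{r/2}(x)$, $\supp b$ compactly contained in $B_r(x)$, $0\le b\le 1$, $b$ harmonic on $\{0<b<1\}$, and $|\nabla b|\le C/r$ there; then $\phi=\psi\circ b$, for a fixed smooth one--variable cutoff $\psi$, will be the function we want.

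\textbf{Step 1: the potential $b$.} I would take $G=G(x,\cdot)$, the Dirichlet Green's function of $B_{2r}(x)$ with pole at $x$, so that $\Delta G=0$ on $B_{2r}(x)\setminus\{x\}$, $G=0$ on $\partial B_{2r}(x)$, and $G>0$ inside. As in \cite{PW2} (following \cite{ChCo}), volume doubling and the Poincar\'e inequality give, on the annulus $B_r(x)\setminus B_{r/2}(x)$, the two--sided bound $c\, r^2|B_r(x)|^{-1}\le G\le C\, r^2|B_r(x)|^{-1}$, the gradient bound $|\nabla G|\le C\, r|B_r(x)|^{-1}$, and the almost--radial behaviour of $G$. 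Using these, one picks levels $0<a<A$ with $A/a$ bounded below by a constant $>1$ so that, by the maximum principle, $\{G\ge A\}\supset B_{r/2}(x)$ while $\{G\ge a\}$ has compact closure in $B_r(x)$, and then sets $b=1$ on $\{G\ge A\}$, $b=(G-a)/(A-a)$ on $\{a\le G\le A\}$, and $b=0$ on $\{G\le a\}$. This $b$ is continuous, with the stated range and support, equals $1$ on $B_{r/2}(x)$, and on $\{0<b<1\}=\{a<G<A\}\subset B_r(x)\setminus B_{r/2}(x)$ it is harmonic with $|\nabla b|=|\nabla G|/(A-a)\le C/r$, where we used $A-a\ge c\, r^2|B_r(x)|^{-1}$.

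\textbf{Step 2: smoothing.} I would fix $\psi\in C^\infty(\IR)$ with $0\le\psi\le 1$, $\psi\equiv 1$ on $[3/4,\infty)$, $\psi\equiv 0$ on $(-\infty,1/4]$, and put $\phi=\psi\circ b$. Near the level sets $\{G=a\}$ and $\{G=A\}$, where $b$ is only Lipschitz, $\phi$ is locally constant, so $\phi\in C^\infty(\M)$; moreover $\phi\ge 0$, $\phi\equiv 1$ on $B_{r/2}(x)$, and $\phi$ vanishes near $\partial B_r(x)$, hence $\phi\in C_0^\infty(B_r(x))$. Finally, since $\nabla\phi=\psi'(b)\nabla b$ and $\Delta\phi=\psi''(b)\,|\nabla b|^2+\psi'(b)\,\Delta b$, and since $\psi'(b),\psi''(b)$ vanish unless $b\in(1/4,3/4)$ --- in which case the point lies in $\{0<b<1\}$, where $\Delta b=0$ --- I would conclude
\[
|\nabla\phi|^2+|\Delta\phi|\ \le\ \bigl(\|\psi'\|_\infty^2+\|\psi''\|_\infty\bigr)\,|\nabla b|^2\ \le\ \frac{C}{r^2},\qquad C=C(n,p,\k,\Lambda).
\]

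\textbf{The main obstacle.} The composition in Step 2 is soft; the real content is the Green's function estimates on the annulus, and the hard one among them is the gradient bound $|\nabla G|\le C\, r|B_r(x)|^{-1}$. With only $|Ric^-|\in L^p$, $p>\frac{n}{2}$, there is no pointwise Cheng--Yau estimate available; instead, by Bochner's formula $v:=|\nabla G|^2$ satisfies $\Delta v\ge -2|Ric^-|\,v$ on the harmonicity region, so $v$ is a subsolution of a Schr\"odinger operator with potential in $L^p$, and I would run a Moser iteration based on the Sobolev inequality from A1--A2 --- the condition $p>\frac{n}{2}$ being exactly what makes the potential subcritical --- together with a Caccioppoli inequality for $G$. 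I expect this to be the main technical point; it is also what degrades the constants with the scale and forces the restriction $r\le r_0(n,p,\k,\Lambda)$. The choice of the levels $a,A$ and the compact containment of $\supp b$ in $B_r(x)$ then go through exactly as in \cite{ChCo,PW2}.
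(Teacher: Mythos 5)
The paper does not prove this lemma; it is quoted from Petersen--Wei \cite{PW2}. Your Green's-function construction is therefore a genuine alternative to the Cheeger--Colding/Petersen--Wei route, which builds the cutoff from the solution $h$ of a Poisson problem $\Delta h=\mathrm{const}$ with $h=0$ on $\partial B_r$ rather than from the Green's function $G$; for $h$, the level-set sandwich ($\{h>cr^2\}\supset B_{r/2}(x)$ while $\{h>c'r^2\}$ has compact closure in $B_r(x)$) follows directly from the integral Laplacian comparison supplied by A2 (Lemma 2.2 of \cite{PW1}), in essence because $h$ is comparable to $r^2-d(x,\cdot)^2$.

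The gap in your Step 1 is the choice of levels. The two-sided bound $c\,r^2|B_r(x)|^{-1}\le G\le C\,r^2|B_r(x)|^{-1}$ on the annulus does \emph{not} imply $\sup_{\partial B_r(x)}G<\inf_{\partial B_{r/2}(x)}G$ (that would require $C<c$), so it is not clear that levels $0<a<A$ with $\{G\ge A\}\supset B_{r/2}(x)$ and $\{G\ge a\}$ compactly contained in $B_r(x)$ exist at all. Invoking ``almost-radial behaviour'' begs the question, and the assertion that the choice of $a,A$ goes through ``exactly as in [ChCo, PW2]'' does not apply, since those references work with the Poisson solution $h$, not with $G$, and for $h$ the sandwich is automatic. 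To salvage your route, take $G$ to be the Dirichlet Green's function of $B_r(x)$ itself (then $\{G\ge a\}$ has compact closure in $B_r(x)$ for every $a>0$) and supply a quantitative boundary decay near $\partial B_r(x)$ -- a barrier or Carleson-type estimate -- so that $\{G\ge A/2\}$ is confined to a fixed interior ball where your Moser-iteration gradient bound for $|\nabla G|$ applies; or simply switch to the Poisson problem. The Bochner/Moser-iteration argument you sketch for $|\nabla G|$ is correct and is indeed the analytic input that A1--A2 must drive, but the level-set step, which you call ``soft,'' is where the argument as written actually breaks down.
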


Let $E$ be a closed subset of $M$. Denote the $r$-tubular neighborhood of $E$ by
$$T_r(E)=\{x\in \M|\ d(x,E)\leq r\}.$$
For $0<r_1<r_2$, define the annulus $A_{r_1, r_2}(E)=T_{r_2}(E)\setminus \overline{T_{r_1}(E)}$.
Using the lemma above and a similar argument as in the proof of Lemma 2.6 in \cite{CoNa}, one has

\begin{lemma}{(Tian-Z. Zhang \cite{TZz:1})}\label{cutoffannulus}
Suppose that A1 and A2 are satisfied. For any $R>0$, there exists $C=C(n,p,\k,\Lambda, R)$ such that the following holds.
Let $E$ be any closed subset and $0<r_1<10r_2<R$. There exists a function $\phi\in C^{\infty}(B_R(E))$ satisfying
$$\phi\geq0,\quad\ \phi=1\ \textrm{in}\ A_{3r_1,r_2/3}(E),\quad \phi=0\  \textrm{outside}\  A_{2r_1, r_2/2}(E),$$
$$|\nabla \phi|^2+|\Delta \phi|\leq Cr_1^{-2}\  \textrm{in}\ A_{2r_1, 3r_1}(E),$$
and
$$|\nabla\phi|^2+|\Delta\phi|\leq Cr_2^{-2} \ \textrm{in}\ A_{r_2/3, r_2/2}(E).$$
\end{lemma}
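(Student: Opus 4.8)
The plan is to follow verbatim the route of Colding--Naber for their Lemma~2.6 in \cite{CoNa}, substituting our ball cut-off Lemma~\ref{cutoffball} (valid under A1 and A2) for the Cheeger--Colding cut-off under a lower Ricci bound, and using the $L^p$ volume doubling \eqref{Lpvolumedoubling} in place of the classical Bishop--Gromov doubling. First I would dispose of the trivial ranges of the parameters: if $r_1\ge r_2/9$ then $A_{3r_1,r_2/3}(E)=\emptyset$, so there is no condition to impose in the interior of $A_{2r_1,r_2/2}(E)$, and one checks that $\phi\equiv 0$ satisfies all the requirements; hence we may assume $r_1<r_2/9$. Next, after rescaling the metric we may also assume that $R$ is below the thresholds $r_0=r_0(n,p,\kappa,\Lambda)$ of Lemma~\ref{cutoffball} and of the doubling theorem of \cite{PW2}; this is exactly what produces the stated dependence of $C$ on $R$, and in what follows all radii are $\le R$ and hence below these thresholds.

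With $\rho(x)=\dist(x,E)$, I would construct $\phi$ as a product $\phi=\phi^-\phi^+$, where $\phi^-\in C^\infty(\M)$ equals $0$ on $T_{2r_1}(E)$, equals $1$ outside $T_{3r_1}(E)$, and transitions across $\{2r_1<\rho<3r_1\}$ at scale $r_1$ with $|\nabla\phi^-|^2+|\Delta\phi^-|\le Cr_1^{-2}$; and $\phi^+\in C^\infty(\M)$ equals $1$ on $T_{r_2/3}(E)$, equals $0$ outside $T_{r_2/2}(E)$, and transitions across $\{r_2/3<\rho<r_2/2\}$ at scale $r_2$ with $|\nabla\phi^+|^2+|\Delta\phi^+|\le Cr_2^{-2}$. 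Since $r_1<r_2/9$, the transition intervals $(2r_1,3r_1)$ and $(r_2/3,r_2/2)$ are disjoint, so at every point one of $\phi^{\pm}$ is locally constant: this gives $0\le\phi\le1$, the exact value $\phi=1$ on $A_{3r_1,r_2/3}(E)$, the support condition, and — because $\nabla\phi=\phi^+\nabla\phi^-+\phi^-\nabla\phi^+$ and $\Delta\phi=\phi^+\Delta\phi^-+\phi^-\Delta\phi^++2\langle\nabla\phi^-,\nabla\phi^+\rangle$ with the cross term vanishing on both annuli of interest — the two scale-dependent derivative bounds.

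To build $\phi^-$ (the construction of $\phi^+$ is identical with $r_1$ replaced by $r_2$), fix a small parameter $\eta=\eta(n,p,\kappa,\Lambda)\in(0,1)$ and choose a maximal $\tfrac{\eta r_1}{2}$-separated set $\{y_j\}$ in $\{(2-\eta)r_1\le\rho\le(3+\eta)r_1\}$; the balls $B_{\eta r_1/2}(y_j)$ then cover this region, and by a standard argument using \eqref{Lpvolumedoubling} the balls $B_{\eta r_1}(y_j)$ have bounded multiplicity $N=N(n,p,\kappa,\Lambda)$. On each $B_{\eta r_1}(y_j)$ take $\psi_j$ from Lemma~\ref{cutoffball}, so $\psi_j\ge0$, $\psi_j=1$ on $B_{\eta r_1/2}(y_j)$, and $|\nabla\psi_j|^2+|\Delta\psi_j|\le C(\eta r_1)^{-2}$. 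Put $\hat\rho=\big(\textstyle\sum_j\rho(y_j)\psi_j\big)\big/\big(\textstyle\sum_j\psi_j\big)$ on the cover region, and extend it by the constant $2r_1$ on $\{\rho<(2-\eta)r_1\}$ and by $3r_1$ on $\{\rho>(3+\eta)r_1\}$. Writing $\hat\rho-\rho(x)=\big(\sum_j(\rho(y_j)-\rho(x))\psi_j\big)/\sum_j\psi_j$ and using $|\rho(y_j)-\rho(x)|\le C\eta r_1$ on the relevant supports together with the bounded multiplicity, one gets $\hat\rho$ smooth with $|\hat\rho-\rho|\le C\eta r_1$, $|\nabla\hat\rho|\le CN$ and $|\Delta\hat\rho|\le CN(\eta r_1)^{-1}$. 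Finally set $\phi^-=g(\hat\rho/r_1)$ for a fixed smooth $g$ with $g\equiv0$ on $(-\infty,9/4]$ and $g\equiv1$ on $[11/4,\infty)$; once $\eta$ is chosen so that $C\eta<1/4$ the pieces match smoothly, $\phi^-$ has the claimed transition behaviour, and $|\nabla\phi^-|^2+|\Delta\phi^-|\le C(|g''||\nabla\hat\rho|^2+|g'||\Delta\hat\rho|r_1)r_1^{-2}\le Cr_1^{-2}$ with $C=C(n,p,\kappa,\Lambda,R)$.

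I expect the only real obstacle — and the sole place where A1 and A2 enter — to be obtaining, uniformly in the base point and the scale, both the multiplicity bound for the covers and the \emph{second-order} control on the ball cut-offs; these are furnished precisely by the $L^p$ volume doubling \eqref{Lpvolumedoubling} (a corollary of Theorem~\ref{volumecomparison}) and by Lemma~\ref{cutoffball}, and it is here that $p>n/2$ is indispensable. A minor technical point is that composing with the \emph{fixed} profile $g$ forces $\hat\rho$ to approximate $\rho$ to within a small fixed fraction of the scale, which is why the covers are taken at the shrunk scale $\eta r_1$ rather than $r_1$; since $\eta$ is a fixed constant it is absorbed into $C$. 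The remaining verifications — smooth matching of the piecewise definitions across $\partial T_{(2-\eta)r_1}(E)$ and $\partial T_{(3+\eta)r_1}(E)$, the bound $0\le\phi\le1$, and the support statements — are routine and proceed exactly as in \cite{CoNa,ChCo}.
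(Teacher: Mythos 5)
The proposal is correct and takes essentially the same approach the paper intends: the paper gives no proof and simply invokes Lemma~\ref{cutoffball} together with ``a similar argument as in the proof of Lemma 2.6 in [CoNa],'' and your reconstruction (splitting $\phi=\phi^-\phi^+$ at the two scales, mollifying $\dist(\cdot,E)$ at each scale via a bounded-multiplicity cover of Petersen--Wei ball cut-offs, and composing with a fixed profile, with the $L^p$ volume doubling supplying the multiplicity bound) is precisely that adaptation, and the derivative estimates and piecewise matching you outline go through.
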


Let $G(y,t;x,0)=G(x,t;y,0)$ be the heat kernel on $M$. It can be showed that $G(y,t;x,0)$ has both Gaussian upper and lower bounds as follows
\begin{lemma}{(Tian-Z. Zhang \cite{TZz:1} ) }
\label{HK}
Suppose that A1 and A2  are satisfied. There exist positive constants $C_i=C_i(n,p,\k,\Lambda)$, $i=1,2,3,4$, such that
\begin{equation}\label{HKbound}
C_1t^{-\frac{n}{2}}e^{\frac{-C_2d^2(x,y)}{t}}\leq G(y,t;x,0)\leq C_3t^{-\frac{n}{2}}e^{-\frac{d^2(x,y)}{C_4t}},\ \forall x,y\in \M,\ \textrm{and}\  0<t\leq 1.
\end{equation}
\end{lemma}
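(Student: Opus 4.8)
The upper bound in \eqref{HKbound} is already available: by the Remark following Theorem \ref{thmLY}, condition \textit{A2} together with the noncollapsing \textit{A1} forces the heat kernel of \eqref{heatequation} to satisfy the Gaussian upper bound \eqref{GUP}, and on $(0,1]$ the prefactor $\hat C(t)$ may be replaced by a constant $\hat C=\hat C(n,p,\k,\Lambda)$; this is the asserted upper inequality with $C_3=\hat C$, $C_4=1/\bar c$. I shall also freely use the volume bounds at my disposal: $|B(x,r)|\le C(n,p,\Lambda,r)\,r^n$ from Theorem \ref{volumecomparison} (with $C$ nondecreasing in $r$), and $|B(x,r)|\ge\k r^n$ for $r\le 1$ from \textit{A1}. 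The plan for the lower bound is the classical Li--Yau route: first establish an on-diagonal bound $G(x,t;x,0)\ge c_1 t^{-n/2}$, then upgrade it to a full Gaussian lower bound via the Li--Yau estimate of Theorem \ref{thmLY}.

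For the on-diagonal bound I would use conservation of mass and the reproducing formula. Since $\M$ is compact, $\int_\M G(x,s;y,0)\,dy=1$, and by Chapman--Kolmogorov and symmetry $G(x,2s;x,0)=\int_\M G(x,s;y,0)^2\,dy$, so Cauchy--Schwarz on a ball gives
\[
G(x,2s;x,0)\ \ge\ \frac{1}{|B(x,\rho)|}\left(\int_{B(x,\rho)}G(x,s;y,0)\,dy\right)^{2}.
\]
Using the Gaussian upper bound \eqref{GUP}, a dyadic decomposition of $\M\setminus B(x,\rho)$, and the volume bound $|B(x,r)|\le Cr^n$ for the (bounded, since $s\le 1$) radii that occur, one checks that there is $A=A(n,p,\k,\Lambda)$ such that, with $\rho=A\sqrt s$, the tail $\int_{\M\setminus B(x,\rho)}G(x,s;y,0)\,dy\le\tfrac12$ for all $0<s\le\tfrac12$. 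Together with $|B(x,A\sqrt s)|\le C(A\sqrt s)^n$ this yields $G(x,2s;x,0)\ge c_1 s^{-n/2}$, i.e. $G(x,t;x,0)\ge c_1 t^{-n/2}$ for $0<t\le1$, with $c_1=c_1(n,p,\k,\Lambda)$.

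Next I would extract a parabolic Harnack inequality from the Li--Yau bound \eqref{eqLY} of Theorem \ref{thmLY}\,(1). Fix $\a=1/2$ (so $\delta$ is determined by \eqref{alphadelta}); for $0<t\le1$ the factor $\underline{J}(t)$ is bounded below by the explicit positive constant $J_*:=\underline{J}(1)=J_*(n,p,\k,\Lambda)$, so \eqref{eqLY} implies the fixed-constant differential inequality $\a J_*|\d\log u|^2-\p_t\log u\le \frac{n}{(2-\delta)\a J_*}\frac1t$ on $\M\times(0,1]$ for every positive solution $u$ of \eqref{heatequation}; this applies to $u=G(\cdot,\cdot;x,0)$ after the usual limiting argument (apply the estimate to the time-shifted solutions $G(\cdot,\cdot+\eps;x,0)$, which have smooth positive initial data, and let $\eps\to0$). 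Integrating along constant-speed geodesics in space-time gives, for $0<t_1<t_2\le1$,
\[
u(x_1,t_1)\ \le\ u(x_2,t_2)\left(\frac{t_2}{t_1}\right)^{b}\exp\!\left(\frac{d^2(x_1,x_2)}{4\a J_*(t_2-t_1)}\right),\qquad b=\frac{n}{(2-\delta)\a J_*}.
\]
Taking $u=G(\cdot,\cdot;x,0)$, $t_1=t/2$, $t_2=t$, $x_1=x$, $x_2=y$ and inserting the on-diagonal bound for $G(x,t/2;x,0)$ produces
\[
G(y,t;x,0)\ \ge\ 2^{-b}c_1\,(t/2)^{-n/2}\exp\!\left(-\frac{d^2(x,y)}{2\a J_*\,t}\right)\ =\ C_1\,t^{-n/2}e^{-C_2 d^2(x,y)/t},\qquad 0<t\le1,
\]
which is the lower inequality in \eqref{HKbound}.

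I expect the main difficulty to lie not in the architecture of the proof, which is the standard Li--Yau plus semigroup scheme, but in the book-keeping needed to keep \emph{every} constant depending only on $n,p,\k,\Lambda$ and nothing about the particular manifold: this requires (i) using the Petersen--Wei volume comparison (Theorem \ref{volumecomparison} and its doubling corollary) to control uniformly the volumes of the small balls appearing in the tail estimate, (ii) the uniform form of the Gaussian upper bound \eqref{GUP} guaranteed by \textit{A2}, and (iii) the fact, from Claim \ref{claim}, that $\underline{J}(1)$ is an explicit positive quantity controlled by $n,p,\k,\Lambda$. A secondary technical point is the passage from the gradient estimate for smooth positive solutions to the heat kernel, which is singular at $t=0$; this is handled by the time-shift limit indicated above. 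Once these uniformities are in place the estimates close as described. (We remark that this lemma is due to Tian--Z.~Zhang \cite{TZz:1}; the above is the route via the Li--Yau bound proved in this paper.)
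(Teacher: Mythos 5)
Your proposal is correct in its essentials, and for the lower bound it takes a genuinely different route than the one the paper (following Tian--Z.~Zhang \cite{TZz:1}) sketches. The paper's remark after the lemma proposes passing from the Gaussian upper bound to the lower bound via an on-diagonal \emph{gradient} estimate for $G$; you instead derive the on-diagonal \emph{lower} bound by the semigroup (Chapman--Kolmogorov) identity $G(x,2s;x,0)=\int_\M G(x,s;y,0)^2\,dy$ together with a Gaussian tail estimate, and then promote it to the full off-diagonal lower bound by the Li--Yau Harnack inequality furnished by Theorem~\ref{thmLY}. Both are classical routes to two-sided heat-kernel bounds; yours has the merit of reusing the paper's own Li--Yau estimate, and it makes the logical order transparent (upper bound $\Rightarrow$ Claim~\ref{claim} $\Rightarrow$ Li--Yau $\Rightarrow$ Harnack $\Rightarrow$ lower bound, with no circularity since the lower bound is nowhere used in proving Theorem~\ref{thmLY}).

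One small inaccuracy should be flagged. In the tail estimate you describe the radii arising in the dyadic decomposition of $\M\setminus B(x,\rho)$ as ``bounded, since $s\le 1$,'' but this is not so: the radii $2^{k}\rho$ grow without bound in $k$ (and $\diam\M$ is not controlled by $n,p,\k,\Lambda$). The correct justification is that Theorem~\ref{volumecomparison}, in the limit $r\to 0$, yields a volume bound
\[
|B(x,R)|\ \le\ R^{n}\Bigl(\omega_n^{1/2p}+C(n,p)\,\Lambda^{1/2p}R^{1-n/2p}\Bigr)^{2p}
\]
valid for \emph{every} $R>0$, where $\omega_n$ is the volume of the Euclidean unit ball. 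This is a polynomial bound in $R$ with coefficients depending only on $n,p,\Lambda$, and that polynomial growth is dominated by the Gaussian factor $e^{-\bar c\,4^{k}A^{2}}$ coming from \eqref{GUP}; hence the series over $k$ converges and can be made $\le\tfrac12$ by choosing $A=A(n,p,\k,\Lambda)$ large, uniformly over $0<s\le 1$. With this observation in place your on-diagonal bound, and hence the whole argument, does close with constants depending only on $n,p,\k,\Lambda$, as required.
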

Actually, the Gaussian upper bound can be obtained by an $L^1$ mean value inequality for $G(y,t;x,0)$ and Grigor'yan's method in \cite{Gr1997}. Then the lower bound follows from the upper bound and an on-diagonal gradient bound for $G(y,t;x,0)$.

By using Duhamel's principle, it is not hard to prove the following $L^1$ Harnack inequalities.

\begin{lemma}\label{parabolicL1harnack}
Let $u(x,t)$ be a nonnegative function satisfying
\begin{equation}
\frac{\partial u}{\partial t}\geq\Delta u -\xi,
\end{equation}
where $\xi=\xi(x)\geq0$ is a smooth function.

(i) If A1 and A2 are satisfied, then for any $q>\frac{n}{2}$, there exists a constant $C=C(n,p,q,\kappa,\Lambda)$ such that
\begin{equation}\label{L1harnack1}
\oint_{B_r(x)}u(y,0)dy\leq C\left(u(x,r^2)+r^{2-\frac{n}{q}}\|\xi\|_{L^q}\right)
\end{equation}
holds for any $x\in \M$ and $0<r\leq 1$.

More generally, we have
\begin{equation}
\oint_{B_r(x)}u(y,0)dy\leq C\left(\inf_{B_r(x)}u(\cdot,r^2)+r^{2-\frac{n}{q}}\|\xi\|_{L^q}\right).
\end{equation}

(ii) If A1 and A3 are satisfied, then for any $q>0$ and $\lambda>n-2q$, there exists a constant $C=C(n,q,\lambda,\kappa, \Gamma)$ such that
\begin{equation}\label{L1harnack2}
\oint_{B_r(x)}u(y,0)dy\leq C\left(u(x,r^2)+r^{2-\frac{n-\lambda}{q}}\|\xi\|_{K^{q,\lambda}}\right)
\end{equation}
holds for any $x\in \M$ and $0<r\leq 1$.

More generally, we have
\begin{equation}
\oint_{B_r(x)}u(y,0)dy\leq C\left(\inf_{B_r(x)}u(\cdot,r^2)+r^{2-\frac{n-\lambda}{q}}\|\xi\|_{K^{q,\lambda}}\right).
\end{equation}
\end{lemma}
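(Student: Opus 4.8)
The plan is to convert the differential inequality into an integral inequality via Duhamel's principle applied with the heat kernel $G$, and then to absorb the resulting error term into the stated right-hand side using the Gaussian bounds of Lemma~\ref{HK} together with the volume bounds. Fix $x\in\M$ and $0<r\le 1$, and regard $s\mapsto\int_\M G(x,r^2;y,s)u(y,s)\,dy$ as a function on $(0,r^2)$. Since $\partial_s G(x,r^2;\cdot,s)=-\Delta_y G(x,r^2;\cdot,s)$ and, by hypothesis, $\Delta u-\partial_s u\le\xi$, differentiating under the integral sign and integrating by parts gives
\[
\frac{d}{ds}\int_\M G(x,r^2;y,s)u(y,s)\,dy=-\int_\M G(x,r^2;y,s)\big(\Delta u-\partial_s u\big)(y,s)\,dy\ge-\int_\M G(x,r^2;y,s)\xi(y)\,dy .
\]
Integrating over $s\in(0,r^2)$ and using $\int_\M G(x,r^2;y,s)u(y,s)\,dy\to u(x,r^2)$ as $s\uparrow r^2$ and $\to\int_\M G(x,r^2;y,0)u(y,0)\,dy$ as $s\downarrow 0$ yields the key inequality
\[
\int_\M G(x,r^2;y,0)\,u(y,0)\,dy\le u(x,r^2)+\int_0^{r^2}\!\!\int_\M G(x,r^2;y,s)\,\xi(y)\,dy\,ds .
\]
On a compact manifold with $G$ smooth off the diagonal this manipulation is harmless; $\xi\in L^1(\M)$ in both cases.

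For the left-hand side, the Gaussian lower bound in \eqref{HKbound} gives $G(x,r^2;y,0)\ge C_1e^{-C_2}r^{-n}$ whenever $y\in B_r(x)$, so together with A1 and the upper volume bound $|B_r(x)|\le Cr^n$ one obtains $\int_\M G(x,r^2;y,0)u(y,0)\,dy\ge c\,\oint_{B_r(x)}u(y,0)\,dy$ with $c$ depending only on the stated parameters. To get the sharper form with $\inf_{B_r(x)}u(\cdot,r^2)$, I would run exactly the same computation with $x$ replaced by an arbitrary point $x'\in B_r(x)$: since $d(x',y)\le 2r$ for $y\in B_r(x)$ the lower bound $G(x',r^2;y,0)\ge c\,r^{-n}$ still holds, so the displayed inequality persists with $u(x',r^2)$ on the right, and one then takes the infimum over $x'\in B_r(x)$.

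It remains to estimate the error term $I:=\int_0^{r^2}\int_\M G(x,r^2;y,s)\,\xi(y)\,dy\,ds$, where the two cases differ. Under A2, apply H\"older in $y$: since $\int_\M G\,dy=1$ and $G(x,r^2;y,s)\le C(r^2-s)^{-n/2}$ by \eqref{HKbound}, one has $\big(\int_\M G^{q/(q-1)}dy\big)^{(q-1)/q}\le C(r^2-s)^{-n/(2q)}$, hence $I\le C\|\xi\|_{L^q}\int_0^{r^2}(r^2-s)^{-n/(2q)}\,ds=C\|\xi\|_{L^q}\,r^{2-n/q}$, the time integral converging precisely because $q>n/2$. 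Under A3, I would first integrate in $s$: the Gaussian upper bound gives $\int_0^{r^2}G(x,r^2;y,s)\,ds\le C\,d^{2-n}(x,y)$ when $d(x,y)\le r$, with fast Gaussian decay when $d(x,y)>r$. Decomposing $B_r(x)$ into dyadic annuli $A_j=\{2^{-j-1}r\le d(x,y)<2^{-j}r\}$ and applying H\"older on each annulus against the Kato norm (using $|A_j|\le C(2^{-j}r)^n$) gives $\int_{A_j}d^{2-n}(x,y)\,|\xi(y)|\,dy\le C\|\xi\|_{K^{q,\lambda}}(2^{-j}r)^{\,2-(n-\lambda)/q}$; summing over $j\ge0$ converges exactly when $2-(n-\lambda)/q>0$, i.e. $\lambda>n-2q$, which yields $I\le C\|\xi\|_{K^{q,\lambda}}\,r^{\,2-(n-\lambda)/q}$, the contribution of $\{d(x,y)>r\}$ being even smaller. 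Combining these bounds on $I$ with the lower bound for the left-hand side gives \eqref{L1harnack1} and \eqref{L1harnack2}, and the $\inf$-versions follow from the modification in the previous paragraph.

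The step I expect to be the main obstacle is the sharp bound on $I$ under the Kato condition in (ii): one must retain the correct power of $r$ throughout and see that the dyadic sum converges exactly under $\lambda>n-2q$, rather than settling for the scale-blind estimate $\int_0^{r^2}G\,ds\lesssim d^{2-n}$, which would lose the $r$-scaling and hence the form of the error term. The Duhamel identity and the lower bound for the left-hand side are routine once Lemma~\ref{HK} is available, and tracking that all constants depend only on $n,p,q,\lambda,\kappa,\Lambda$ (resp. $\Gamma$) is immediate since those are the only parameters entering Lemma~\ref{HK} and the volume bounds.
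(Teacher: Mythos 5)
Your proof is correct and follows the same Duhamel-plus-Gaussian-bounds skeleton as the paper; the only genuine divergence is in part (ii). The paper keeps the time variable live: it writes $\xi = (\xi/d^{\lambda/q})\cdot d^{\lambda/q}$, applies H\"older in $y$ at each fixed time slice against the Kato norm, evaluates $\int_\M d^{\lambda/(q-1)}(t-s)^{-nq/(2(q-1))}e^{-cd^2/(t-s)}dy$ by scaling to get a power $(t-s)^{-(n-\lambda)/(2q)}$, and then integrates in $s$, with convergence forcing $\lambda>n-2q$. You instead integrate the heat kernel in time first, replacing it by the Green-function bound $\int_0^{r^2}G\,ds\lesssim d^{2-n}$, and then do a spatial dyadic decomposition of $B_r(x)$ with a H\"older estimate against the Kato norm on each annulus. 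Both routes produce the same power $r^{2-(n-\lambda)/q}$ and the same threshold $\lambda>n-2q$; the paper's slice-wise H\"older is a shorter computation (no dyadic sum), while your spatial decomposition makes the geometric origin of the threshold (integrability of $d^{2-(n-\lambda)/q-n}\cdot d^{n-1}$ near the diagonal) more transparent. One small remark: the worry you flag at the end about the ``scale-blind'' $d^{2-n}$ estimate losing the $r$-scaling is unfounded --- because the integration is restricted to $B_r(x)$ (equivalently, the dyadic radii are $2^{-j}r$), the $r$-dependence is recovered automatically, and even a single H\"older on all of $B_r(x)$ without the dyadic splitting would reproduce $r^{2-(n-\lambda)/q}$. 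The treatment of part (i) and of the $\inf$-versions matches the paper's argument.
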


\begin{remark}
Part i) above is in Corollary 2.15 in \cite{TZz:1}. The proof of part ii) is analogous to part i). The heat kernel bounds follow from the Li-Yau gradient bound in Theorem \ref{thmLY} (b).
\end{remark}

Let $O^+$ and $O^-$ be two fixed points in $\M$. Following \cite{CoNa}, define
\begin{equation}
d^-(x)=d(O^-,x),\  d^+(x)=d(O^+,O^-)-d(O^+,x),
\end{equation}
and
\begin{equation}
e(x)=d^-(x)-d^+(x)=d(O^-,x)+d(O^+,x)-d(O^+,O^-).
\end{equation}

First of all, we have in barrier sense that
\begin{align}
\Delta d^-(x)&\leq \frac{n-1}{d^-}+\psi^-\\
-\Delta d^+(x)&\leq  \frac{n-1}{d^+}+\psi^+,
\end{align}
where $\psi^-=\max\{\Delta d^-(x)-\frac{n-1}{d^-},0\}$, and $\psi^+=\max\{-\Delta d^+(x)-\frac{n-1}{d^+},0\}$.
Moreover, it follows from Lemma 2.2 in \cite{PW1} that
\begin{equation}
\int_{B_r(x)}|\psi^{\pm}|^{2p}(y)dy\leq C(n,p)\int_{B_r(x)}|Ric^-|^p(y)dy.
\end{equation}

Denote by
$$d_0=d(O^+,O^-)\ \textrm{and}\ M_{r_1,r_2}=A_{r_1d_0,r_2d_0}(\{O^+, O^-\}).$$
With out loss of generality, we may assume that $d_0\leq1$.

With the preparation above, by applying the method in section 2.1 in \cite{CoNa}, we can now prove

\begin{lemma}\label{excess}
For some fixed $\delta>0$,

i) if A1 and A2 are satisfied, then there exist a small constant $\overline{\e}=\overline{\e}(n,p,\delta)$, and a constant $C=C(n,p,\k,\Lambda,\delta)$ such that for any $0<\e\leq\overline{\e}$, we have
\begin{equation}\label{general excess estimate}
\oint_{B_{\e d_0}(x)}e(y)dy\leq C\left[e(x)+\e^2d_0+(||\psi^+||_{L^{2p}}+||\psi^-||_{L^{2p}})\e^{2-\frac{n}{2p}}d_0\right]\leq C(e(x)+\e^{2-\frac{n}{2p}}d_0),
\end{equation}
for all $x\in M_{\frac{\delta}{4},16}$.

In particular, this implies the excess estimate of Abresch-Gromoll \cite{AbGr}, i.e.,
\begin{equation}\label{excess estimate}
e(y)\leq C\e^{1+\alpha(n,p)}d_0,\ \forall y\in B_{\frac{1}{2}\e d_0}(x)
\end{equation}
whenever $e(x)\leq \e^{2-\frac{n}{2p}}d_0$, where $\alpha(n,p)=\frac{1}{n+1}(1-\frac{n}{2p}).$

 ii) if A1 and A3 are satisfied, then for any $q>0$ and $\lambda> n-2q$, there exist a small constant $\overline{\e}=\overline{\e}(n,\delta)$, and a constant $C=C(n,q,\lambda,\k,\Gamma,\delta)$ such that for any $0<\e\leq\overline{\e}$, we have
\begin{equation}\label{general excess estimate1}
\oint_{B_{\e d_0}(x)}e(y)dy\leq C\left[e(x)+\e^2d_0+(||\psi^+||_{K^{q,\lambda}}+||\psi^-||_{K^{q,\lambda}})\e^{2-\frac{n-\lambda}{q}}d_0^{2-\frac{n-\lambda}{q}}\right],
\end{equation}
for all $x\in M_{\frac{\delta}{4},16}$.

\end{lemma}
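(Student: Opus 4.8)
The plan is to reduce both inequalities to the elliptic $L^1$ Harnack inequality of Corollary~\ref{ellipticL1harnack}, applied to the excess $e=d^-+d(O^+,\cdot)-d_0$ after bounding $\Delta e$ by the Laplacian comparison, and then to extract the Abresch--Gromoll bound \eqref{excess estimate} from \eqref{general excess estimate}. Adding the two barrier inequalities recorded above gives, in the barrier (hence distributional) sense,
\[
\Delta e \;\le\; \frac{n-1}{d^-}+\frac{n-1}{d(O^+,\cdot)}+\psi^-+\psi^+\;=:\;\xi\;\ge\;0 .
\]
Fix $x\in M_{\delta/4,16}$ and choose $\overline{\e}=\overline{\e}(n,p,\delta)$ small enough that $B_{C_0\e d_0}(x)\subset M_{\delta/16,64}$ for all $0<\e\le\overline{\e}$, where $C_0$ is an absolute constant; then on that ball $d^-$ and $d(O^+,\cdot)$ are $\ge\frac{\delta}{16}d_0$, so $\xi\le \frac{C(n)}{\delta d_0}+\psi^-+\psi^+$ there. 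Moreover $\psi^{\pm}$ is controlled by Lemma~2.2 of \cite{PW1}, $\int_{B_r(z)}|\psi^{\pm}|^{2p}\le C(n,p)\int_{B_r(z)}|Ric^-|^p$, whence $\|\psi^{\pm}\|_{L^{2p}(B_r(z))}\le C(n,p)\Lambda^{1/2}$ under A2 (and, under A3, the Kato analogue keeps $\|\psi^{\pm}\|_{K^{q,\lambda}}$ finite).

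Next I would feed this into the $L^1$ Harnack, whose proof (Lemma~\ref{parabolicL1harnack}) runs through Duhamel's formula and the Gaussian bounds \eqref{HKbound}. The only delicate point is that $\xi$ is non-smooth and singular at $O^{\pm}$; I would handle it by multiplying $e$ by the cutoff of Lemma~\ref{cutoffannulus} that equals $1$ on $M_{\delta/8,32}$, since the resulting commutator terms are supported in the two transition annuli, which sit at distance $\gtrsim\delta d_0$ from $B_{\e d_0}(x)$ and therefore enter only through an exponentially small factor $e^{-c\delta^2/\e^2}$ coming from \eqref{HKbound}. The same Gaussian decay confines the error integral to $B_{C_0\e d_0}(x)$, so I expect to obtain
\[
\oint_{B_{\e d_0}(x)}e\;\le\;C\Big(e(x)+(\e d_0)^{2-\frac{n}{2p}}\|\xi\|_{L^{2p}(B_{C_0\e d_0}(x))}\Big)+Ce^{-c\delta^2/\e^2}d_0
\]
under A1--A2, and the same with $K^{q,\lambda}$ (for $q>0$, $\lambda>n-2q$ as in the statement) in place of $L^{2p}$ and exponent $2-\frac{n-\lambda}{q}$ under A1, A3.

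Now I would insert the first-paragraph bounds. By the volume upper bound $|B_r(z)|\le Cr^n$ (Petersen--Wei), $\|\tfrac{C(n)}{\delta d_0}\|_{L^{2p}(B_{C_0\e d_0}(x))}\le \tfrac{C}{\delta d_0}(\e d_0)^{\frac{n}{2p}}$, which after multiplication by $(\e d_0)^{2-\frac{n}{2p}}$ contributes $\tfrac{C}{\delta}\e^2 d_0$; the $\psi^{\pm}$ part contributes $\|\psi^{\pm}\|_{L^{2p}}(\e d_0)^{2-\frac{n}{2p}}\le\|\psi^{\pm}\|_{L^{2p}}\e^{2-\frac{n}{2p}}d_0$ since $d_0\le1$ and $2-\tfrac{n}{2p}\ge1$. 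Adding these (the exponential term being $\le\e^2 d_0$ for $\e$ small) gives \eqref{general excess estimate}; using $\|\psi^{\pm}\|_{L^{2p}}\le C(n,p)\Lambda^{1/2}$ and $\e^2 d_0\le\e^{2-\frac{n}{2p}}d_0$ yields its simplified form. The identical computation with the Kato norm --- the constant part giving $\tfrac{C}{\delta d_0}(\e d_0)^{\frac{n-\lambda}{q}}(\e d_0)^{2-\frac{n-\lambda}{q}}=\tfrac{C}{\delta}\e^2 d_0$ and the curvature part $\|\psi^{\pm}\|_{K^{q,\lambda}}(\e d_0)^{2-\frac{n-\lambda}{q}}=\|\psi^{\pm}\|_{K^{q,\lambda}}\e^{2-\frac{n-\lambda}{q}}d_0^{2-\frac{n-\lambda}{q}}$ --- yields \eqref{general excess estimate1}. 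Finally, under $e(x)\le\e^{2-\frac{n}{2p}}d_0$ the bound \eqref{general excess estimate} becomes $\oint_{B_{\e d_0}(x)}e\le C\e^{2-\frac{n}{2p}}d_0$, and \eqref{excess estimate} follows by the Abresch--Gromoll comparison argument adapted to integral curvature bounds (cf.\ \cite{AbGr},\cite{PW1},\cite{TZz:1}): one compares $e$ on a thin annulus about a point of $B_{\frac12\e d_0}(x)$ with an explicit radial supersolution dominating $\tfrac{C(n)}{\delta d_0}$ (the curvature error absorbed through the $L^{2p}$ bound on $\psi^{\pm}$ and the volume estimate), the outer-sphere value being controlled by the averaged bound just proved, and optimizes the inner radius; the balancing produces $1+\alpha(n,p)=(n+2-\tfrac{n}{2p})/(n+1)$. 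Since $2-\tfrac{n}{2p}\ge1+\alpha(n,p)$, the local maximum principle for nonnegative subsolutions of $\Delta v=\xi$ with $\xi\in L^{2p}$, $2p>n$ --- available from the volume doubling \eqref{Lpvolumedoubling} and the attendant Poincar\'e inequality --- also gives \eqref{excess estimate}, in the sharper form $\sup_{B_{\e d_0/2}(x)}e\le C\e^{2-\frac{n}{2p}}d_0$.

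The step I expect to be the main obstacle is the application of the $L^1$ Harnack in the second paragraph: reconciling the barrier-sense, $O^{\pm}$-singular inequality $\Delta e\le\xi$ with the smooth-coefficient statement of Corollary~\ref{ellipticL1harnack} while keeping all constants independent of $\vol(\M)$ --- which is exactly what the cutoff of Lemma~\ref{cutoffannulus} together with the Gaussian localization of the heat kernel accomplish --- and, relatedly, making sure the error norm is taken only over $B_{C_0\e d_0}(x)$, which is what delivers the sharp $\e^2 d_0$ term rather than a weaker $\e^{2-\frac{n}{2p}}$ one.
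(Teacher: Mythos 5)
The first half of your argument (the averaged bound \eqref{general excess estimate}, \eqref{general excess estimate1}) follows the same route as the paper: reduce to Corollary~\ref{ellipticL1harnack} with $\xi=\tfrac{C}{d_0}+\psi^-+\psi^+$, splitting off the constant part to get the sharp $\e^2 d_0$ term. Your remark that the singular parts $\tfrac{n-1}{d^\pm}$ near $O^{\pm}$ make the \emph{global} $L^{2p}$ norm of $\xi$ infinite (since $2p>n$) and must be cut away or localized through the Gaussian decay of $G$ is legitimate --- the paper's phrase ``follows directly from Corollary~\ref{ellipticL1harnack}'' glosses over this, and your exponentially small remainder $e^{-c\delta^2/\e^2}d_0$ is the right size to be absorbed.

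For the passage from \eqref{general excess estimate} to the pointwise bound \eqref{excess estimate}, however, you diverge from the paper and one of your two suggested routes is actually wrong. The paper uses a purely elementary ``Lipschitz $+$ volume'' argument: from $\oint_{B_{\e d_0}(x)}e\le C\e^{2-\frac{n}{2p}}d_0$, for any $y\in B_{(\e-\e^q)d_0}(x)$ and any $q>1$ one has $B_{\e^q d_0}(y)\subset B_{\e d_0}(x)$, so with $\vol B_r\le Cr^n$ and the noncollapsing lower bound one gets $\oint_{B_{\e^q d_0}(y)}e\le C\e^{2-\frac{n}{2p}+n-nq}d_0$; hence \emph{some} point $y'$ in that small ball satisfies $e(y')\le C\e^{2-\frac{n}{2p}+n-nq}d_0$, and since $e$ is $2$-Lipschitz, $e(y)\le e(y')+2\e^q d_0$. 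Optimizing $q$ to balance the two exponents gives exactly $q=1+\alpha(n,p)$. No PDE machinery beyond \eqref{general excess estimate} is needed.

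Your alternative via a local maximum principle for subsolutions does \emph{not} apply: you have $\Delta e\le\xi$, so $e$ is a \emph{super}solution (equivalently $-e$ is a subsolution, and $(-e)^+=0$ gives nothing). The local boundedness $\sup_{B_{r/2}}u\le C(\oint_{B_r}u^+ + r^{2-n/q}\|\xi\|_{L^q})$ holds for subsolutions $\Delta u\ge-\xi$; for supersolutions Moser/De Giorgi only gives the weak Harnack $\oint u\le C(\inf u+\dots)$, which controls the infimum, not the supremum, of $e$. Indeed if the claimed $\sup_{B_{\e d_0/2}}e\le C\e^{2-n/(2p)}d_0$ were available it would already, in the $p=\infty$ case, improve Abresch--Gromoll's sharp exponent $1+\tfrac1{n+1}$ to $2$, which is false. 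Your other alternative, a radial barrier comparison a~la Abresch--Gromoll, can in principle be adapted to integral curvature, but it relies on the pointwise Laplacian comparison which fails under $|Ric^-|\in L^p$ only; the paper's argument is precisely a way to avoid that. Replace this step with the paper's Lipschitz/volume optimization.
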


\noindent {\bf Proof.}
Inequalities \eqref{general excess estimate} and \eqref{general excess estimate1} follows directly from Lemma \ref{parabolicL1harnack} applied to time independent functions, since
$$\Delta e(x)=\Delta d^--\Delta d^+\leq \frac{C}{d_0}+\psi^{-}+\psi^{+}.$$ To see that \eqref{general excess estimate} implies \eqref{excess estimate}, notice that for some $q>1$ satisfying $2\e^q\leq \e$, and any $y\in B_{(\e-\e^q)d_0}(x)$, we have
$$
\int_{B_{\e^qd_0}(y)}e(z)dz\leq \int_{B_{\e d_0}(x)}e(z)dz\leq C(e(x)+\e^{2-\frac{n}{2p}}d_0)\v(B_{\e d_0}(x))\leq C\e^{2-\frac{n}{2p}}d_0(\e d_0)^n.
$$
Thus,
\begin{equation}
\oint_{B_{\e^qd_0}(y)}e(z)dz\leq C\e^{2-\frac{n}{2p}}d_0 \frac{(\e d_0)^n}{(\e^{q}d_0)^n}=C\e^{2-\frac{n}{2p}+n-nq}d_0.
\end{equation}
This means that there exists a point $y^{\prime}\in B_{\e^q}(y)$ such that
$$e(y^{\prime})\leq C\e^{2-\frac{n}{2p}+n-nq}d_0.$$
Hence,
$$e(y)\leq e(y^{\prime})+2d(y,y^{\prime})\leq C(\e^{2-\frac{n}{2p}+n-nq}+\e^q)d_0.$$
By choosing $q=1+\alpha(n,p)=1+\frac{1}{n+1}(1-\frac{n}{2p})$, one has
$$e(y)\leq C\e^{1+\alpha(n,p)}d_0,$$
for any $y\in B_{\frac{1}{2}\e d_0}(x)\subset B_{(\e-\e^q)d_0}(x)$.\qed\\

Under the assumptions \textit{A1} and \textit{A2}, according to Lemma \ref{cutoffannulus}, we can construct a cut-off function $\phi\geq0$ such that
\begin{equation}\label{cutoff}
\phi=1\ \textrm{on}\ M_{\frac{\delta}{4},8},\ supp(\phi)\subset M_{\frac{\delta}{16},16},\ \textrm{and}\ |\Delta \phi|+|\nabla \phi|^2\leq \frac{C}{d_0^2}.
\end{equation}

Define $h_0^{\pm}(x)=\phi d^{\pm}(x)$, and $e_0(x)=\phi e(x)$. Also, denote by $h_t^{\pm}(x)$ and $e_t(x)=h_t^--h_t^+$ the solutions of the equations
\begin{equation}\label{defh}
\left\{\begin{aligned}
&(\frac{\partial}{\partial t}-\Delta)h^{\pm}(x,t)=0\\
&h^{\pm}(x,0)=h_0^{\pm}(x),
\end{aligned}\right.
\end{equation}
and
\begin{equation}\label{defe}
\left\{\begin{aligned}
&(\frac{\partial}{\partial t}-\Delta)e(x,t)=0\\
&e(x,0)=e_0(x)
\end{aligned}\right.
\end{equation}

In the case where $|Ric^-|\in K^{2,n-2}$, by assuming \textit{A1}, \textit{A3}, and \textit{A4}, and using the method in \cite{CoNa}, one can also show the existence of a cut-off function as in Lemma \ref{cutoffannulus}, and hence construct  $h^{\pm}_t(x)$ and $e_t(x)$ as above.

In the following, we derive estimates of $h^{\pm}_t(x)$ and $e_t(x)$. We will use the notation
$$||\psi^{\pm}||_{K^{q,\lambda}}:=||\psi^-||_{K^{q,\lambda}}+||\psi^+||_{K^{q,\lambda}}.$$

Following \cite{CoNa}, we first have
\begin{lemma}\label{laplacian upper bound}
i) If A1 and A2 are satisfied, then there exists a constant $C=C(n,p,\k,\Lambda,\delta)$ such that
$$\Delta h_t^-, -\Delta h_t^+, \Delta e_t\leq C\left(\frac{1}{d_0}+(||\psi^+||_{L^{2p}}+||\psi^-||_{L^{2p}})t^{-\frac{n}{4p}}\right)\leq C\left(\frac{1}{d_0}+t^{-\frac{n}{4p}}\right)$$
in $M_{\frac{\delta}{16}, 16}$.\\

ii) If A1, A3, and A4 are satisfied, then for any $q>0$ and $\lambda>0$, there exists a constant  $C=C(n,q,\lambda, \k,\Gamma,\delta)$ such that
$$\Delta h_t^-, -\Delta h_t^+, \Delta e_t\leq C\left(\frac{1}{d_0}+||\psi^{\pm}||_{K^{q,\lambda}} t^{\frac{n-\lambda}{2q}}\right).$$
in $M_{\frac{\delta}{16}, 16}$.
\end{lemma}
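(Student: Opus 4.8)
The plan is to represent $h^{\pm}_t$ and $e_t=h^-_t-h^+_t$ as heat kernel convolutions of their initial data, apply the Laplacian, integrate by parts so that the Laplacian falls on the initial datum (which obeys an a priori upper bound in the barrier sense), and then estimate the resulting convolution of $\psi^{\pm}$ with the heat kernel using the Gaussian upper bound of Lemma~\ref{HK} together with the volume bound $|B(x,r)|\le Cr^n$. I will describe the argument for $\Delta h^-_t$; the cases $-\Delta h^+_t$ and $\Delta e_t$ are handled identically, with $\psi^-$ replaced by $\psi^+$, resp.\ $\psi^-+\psi^+$.

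First I would record a Laplacian bound for the initial datum $h^-_0=\phi\,d^-$. The distributional Laplacian of a distance function is a signed Radon measure whose absolutely continuous part is dominated by the barrier bound and whose singular part is nonpositive (Calabi); thus $\Delta d^-\le \tfrac{n-1}{d^-}+\psi^-$ and $-\Delta d^+=\Delta\, d(O^+,\cdot)\le \tfrac{n-1}{d(O^+,\cdot)}+\psi^+$ hold as inequalities of measures. Since $\phi\ge 0$ is supported in $M_{\frac{\delta}{16},16}$, on which $d(O^\pm,\cdot)\ge\tfrac{\delta}{16}d_0$ and $|d^\pm|\le 17 d_0$, combining the product rule $\Delta h^-_0=\phi\,\Delta d^- + 2\langle\nabla\phi,\nabla d^-\rangle + d^-\,\Delta\phi$ with the cut-off bounds $|\nabla\phi|^2+|\Delta\phi|\le C/d_0^2$ of \eqref{cutoff} yields, as measures on $\M$ (all nonpositive singular parts being discarded),
\[
\Delta h^-_0\le \frac{C}{d_0}+\psi^-,\qquad -\Delta h^+_0\le \frac{C}{d_0}+\psi^+,\qquad \Delta e_0\le \frac{C}{d_0}+\psi^-+\psi^+,\qquad C=C(n,\delta).
\]
Under \textit{A2}, covering $\M$ by finitely many balls and using $\int_{B_r(x)}|\psi^\pm|^{2p}\le C(n,p)\int_{B_r(x)}|Ric^-|^p$ shows $\psi^\pm\in L^{2p}(\M)$ with $\|\psi^\pm\|_{L^{2p}(\M)}\le C(n,p)\Lambda^{1/2}$; under \textit{A3} the analogous Kato bound for $\psi^\pm$ will be used directly.

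Next I would propagate this forward in time. Since $h^\pm_0$ is Lipschitz, bounded and compactly supported, $h^\pm_t(x)=\int_\M G(x,t;y,0)\,h^\pm_0(y)\,dy$ is smooth for $t>0$, and using the heat kernel symmetry $\Delta_x G(x,t;y,0)=\Delta_y G(x,t;y,0)$ (fixed metric) and integrating by parts against the measure $\Delta h^-_0$,
\[
\Delta h^-_t(x)=\int_\M G(x,t;y,0)\,d\!\left(\Delta h^-_0\right)(y)\le \int_\M G(x,t;y,0)\!\left(\frac{C}{d_0}+\psi^-(y)\right)dy=\frac{C}{d_0}+\int_\M G(x,t;y,0)\,\psi^-(y)\,dy,
\]
since $G>0$ and $\int_\M G(x,t;y,0)\,dy=1$ (equivalently: $\Delta h^-_t$ solves the heat equation with initial value $\Delta h^-_0$, and the comparison principle applies); note this already holds at every $x\in\M$. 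It remains to bound $\int_\M G(x,t;y,0)\psi^-(y)\,dy$. Under \textit{A1} and \textit{A2} this is exactly the computation in \eqref{3} and in the proof of Lemma~\ref{parabolicL1harnack}: H\"older's inequality with exponents $2p$ and $\tfrac{2p}{2p-1}$, the Gaussian upper bound $G(x,t;y,0)\le C_3t^{-n/2}e^{-d^2(x,y)/(C_4t)}$ of Lemma~\ref{HK}, and the volume bound $|B(x,r)|\le Cr^n$ (which gives $\int_\M e^{-cd^2(x,y)/t}dy\le Ct^{n/2}$) together yield $\big(\int_\M G^{2p/(2p-1)}dy\big)^{(2p-1)/(2p)}\le Ct^{-n/(4p)}$, hence $\int_\M G(x,t;y,0)\psi^-(y)\,dy\le C\|\psi^-\|_{L^{2p}}t^{-n/(4p)}\le C(n,p,\kappa,\Lambda,\delta)\,t^{-n/(4p)}$. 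Together with the previous display this proves~(i). For~(ii) one runs the same argument under \textit{A1} and \textit{A3}, replacing the H\"older/$L^{2p}$ step by the corresponding estimate for $\int_\M G(x,t;y,0)\psi^\pm(y)\,dy$ from the proof of Lemma~\ref{parabolicL1harnack}(ii), which uses the Gaussian upper bound assumed in \textit{A3}.

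The one genuinely delicate step is the first one: verifying that $\Delta h^\pm_0$ is a finite signed Radon measure with nonpositive singular part, so that the integration by parts above is legitimate. This rests on the classical structure of the distributional Laplacian of a distance function and on the fact that the cut-off $\phi$ localizes away from $O^\pm$, so that $(n-1)/d(O^\pm,\cdot)$ is bounded by $C/d_0$ on $\supp\phi$. Everything after that is a routine heat kernel convolution estimate of exactly the type already carried out in the proofs of Theorem~\ref{thmLY} and Lemma~\ref{parabolicL1harnack}.
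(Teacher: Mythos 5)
Your proof is correct and follows essentially the same route as the paper's: bound the Laplacian of the (cut-off) initial datum by $C/d_0+\psi^\pm$, then integrate by parts against the heat kernel so that $\Delta h^\pm_t(x)\le \tfrac{C}{d_0}+\int_\M G(x,t;y,0)\psi^\pm(y)\,dy$, and estimate the last integral by H\"older with the Gaussian upper bound and volume growth (and, for part (ii), the Kato-space version from the proof of Lemma~\ref{parabolicL1harnack}). The paper carries out the computation for $\Delta e_t$ rather than $\Delta h^-_t$ (an immaterial choice) and does not spell out the distributional justification of $\int G\,\Delta h^\pm_0 = \int \Delta G\, h^\pm_0$ across the cut locus, which you correctly identify and handle via the nonpositivity of the singular part of $\Delta d^\pm$.
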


Part i) above is Lemma 2.20 in \cite{TZz:1}, and the proof of part ii) is similar.


Using the Li-Yau bound in section 2 and Bochner's formula, it can be shown that
\begin{proposition}\label{prop excess}
i) If A1 and A2 are satisfied, there exists a constant $C=C(n,p,\k,\Lambda,\delta)$, such that for any $x\in M_{\frac{\delta}{2},4}$ and $0<t\leq \overline{\e}^2d_0^2$, the following estimates hold for $y\in B_{\sqrt{t}}(x)$,\\
(1) $\displaystyle |e_t(y)|\leq C\left(e(x)+td_0^{-1}+t^{1-\frac{n}{4p}}\right)$. \\
(2)$\displaystyle |\nabla e_t|(y)\leq \frac{C}{\sqrt{t}}\left(e(x)+td_0^{-1}+t^{1-\frac{n}{4p}}\right)$.\\
(3) $\displaystyle \left|\frac{\partial}{\partial t}e_t(y)\right|=|\Delta e_t(y)|\leq \frac{C}{t}\left(e(x)+td_0^{-1}+t^{1-\frac{n}{4p}}\right)$.\\
(4) $\displaystyle \oint_{B_{\sqrt{t}}(y)}|\nabla^2 e_t|^2\leq \frac{C}{t^2}\left(e(x)+td^{-1}_0+t^{1-\frac{n}{4p}}\right)^2$.\\

ii) If If A1, A3, and A4 are satisfied, then for any $q>0$ and $\lambda>n-2q$, there exists a constant $C=C(n,q,\lambda,\Gamma,\delta)$ such that\\
(1') $\displaystyle |e_t(y)|\leq C\left(e(x)+td_0^{-1}+||\psi^{\pm}||_{K^{q,\lambda}} t^{1-\frac{n-\lambda}{2q}}\right)$. \\
(2')$\displaystyle |\nabla e_t|(y)\leq \frac{C}{\sqrt{t}}\left(e(x)+td_0^{-1}+||\psi^{\pm}||_{K^{q,\lambda}} t^{1-\frac{n-\lambda}{2q}}\right)$.\\
(3') $\displaystyle \left|\frac{\partial}{\partial t}e_t(y)\right|=|\Delta e_t(y)|\leq \frac{C}{t}\left(e(x)+td_0^{-1}+||\psi^{\pm}||_{K^{q,\lambda}} t^{1-\frac{n-\lambda}{2q}}\right)$.\\
(4') $\displaystyle \oint_{B_{\sqrt{t}}(y)}|\nabla^2 e_t|^2\leq \frac{C}{t^2}\left(e(x)+td^{-1}_0+||\psi^{\pm}||_{K^{q,\lambda}} t^{1-\frac{n-\lambda}{2q}}\right)^2$.\\
Here $\overline{\e}$ is the constant in Lemma \ref{excess}.
\end{proposition}

The estimates (1) and (2) above are included in Lemma 2.21 in \cite{TZz:1}. The proofs of (3) and (4) are similar to Lemma 2.11 in \cite{CoNa}, which rely on the Li-Yau bound in Theorem \ref{thmLY}. The proof of part ii) is similar to part i).

Alternatively, one may also use the Gaussian estimate of $|\frac{\partial G}{\partial t}|$ to obtain the estimates in the above proposition.

From the lemma above, one gets

\begin{lemma}\label{c0approximation}
i) If A1 and A2 are satisfied, then for any $x\in M_{\frac{\delta}{2}, 4}$, we have
$$|h_t^{\pm}(x)-d^{\pm}(x)|\leq C(e(x)+td_0^{-1}+t^{1-\frac{n}{4p}}).$$
ii) If A1, A3, and A4 are satisfied, then for any $q>0$, $\lambda>n-2q$ and $x\in M_{\frac{\delta}{2}, 4}$, we have
$$|h_t^{\pm}(x)-d^{\pm}(x)|\leq C(e(x)+td_0^{-1}+||\psi^{\pm}||_{K^{q,\lambda}} t^{1-\frac{n-\lambda}{2q}}).$$
\end{lemma}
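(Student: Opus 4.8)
The plan is to obtain the estimate by integrating in time the \emph{one-sided} Laplacian bounds of Lemma~\ref{laplacian upper bound} and then upgrading them to two-sided $C^0$ bounds using the nonnegativity of the heat-evolved excess $e_t=h_t^--h_t^+$. Since $x\in M_{\frac{\delta}{2},4}\subset M_{\frac{\delta}{4},8}$, the cut-off function $\phi$ of \eqref{cutoff} equals $1$ near $x$, so $h_0^\pm(x)=d^\pm(x)$; as $h_0^\pm$ is Lipschitz, $h_t^\pm$ is continuous up to $t=0$, and from $\partial_t h^\pm=\Delta h^\pm$ we get
\[
h_t^\pm(x)-d^\pm(x)=\int_0^t \Delta h_s^\pm(x)\,ds .
\]
By Lemma~\ref{laplacian upper bound} (part (i), under \textit{A1} and \textit{A2}) one has $\Delta h_s^-(x)\le C(d_0^{-1}+s^{-n/(4p)})$ and $-\Delta h_s^+(x)\le C(d_0^{-1}+s^{-n/(4p)})$ on $M_{\frac{\delta}{16},16}\supset M_{\frac{\delta}{2},4}$, and the exponent $n/(4p)<1$ since $p>n/2$, so these integrate to
\[
h_t^-(x)\le d^-(x)+C\big(td_0^{-1}+t^{\,1-\frac{n}{4p}}\big),\qquad
h_t^+(x)\ge d^+(x)-C\big(td_0^{-1}+t^{\,1-\frac{n}{4p}}\big).
\]
These are the only two signs Lemma~\ref{laplacian upper bound} supplies, so at this stage $h^-_t$ is bounded only from above and $h^+_t$ only from below.

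To close the gap I would invoke the maximum principle. Since $e_0=\phi e\ge0$ on $\M$ (both factors are nonnegative, $e\ge0$ being the triangle inequality), the solution of \eqref{defe} satisfies $e_t\ge0$ for all $t>0$, i.e. $h_t^-\ge h_t^+$ pointwise. Combining this with the two displayed inequalities and the identity $d^-(x)=d^+(x)+e(x)$ gives
\[
h_t^-(x)\ge h_t^+(x)\ge d^+(x)-C(\cdots)=d^-(x)-e(x)-C(\cdots),
\]
\[
h_t^+(x)\le h_t^-(x)\le d^-(x)+C(\cdots)=d^+(x)+e(x)+C(\cdots),
\]
with $C(\cdots)=C\big(td_0^{-1}+t^{\,1-\frac{n}{4p}}\big)$. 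Together with the earlier one-sided bounds this is exactly $|h_t^\pm(x)-d^\pm(x)|\le C\big(e(x)+td_0^{-1}+t^{1-\frac{n}{4p}}\big)$, proving (i). Part (ii) is verbatim once one replaces Lemma~\ref{laplacian upper bound}(i) by its \textit{A3}-version, the integrability of $s^{-(n-\lambda)/(2q)}$ near $s=0$ being guaranteed by $\lambda>n-2q$, and the term $t^{1-\frac{n}{4p}}$ by $\|\psi^\pm\|_{K^{q,\lambda}}\,t^{\,1-\frac{n-\lambda}{2q}}$.

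I do not anticipate a genuine difficulty here: all the analytic content is already packaged in Lemma~\ref{laplacian upper bound} (which in turn rests on the heat kernel Gaussian bounds of Lemma~\ref{HK} and the Li-Yau estimate of Section~2). The one point deserving attention is the mild asymmetry just described --- Lemma~\ref{laplacian upper bound} controls $\Delta h^-_t$ from above and $-\Delta h^+_t$ from above, so a direct integration yields only half of each $C^0$ bound; the missing halves come for free from $e_t\ge0$, at the harmless cost of the extra $e(x)$ term on the right-hand side. One should also keep track of the nested regions, but $M_{\frac{\delta}{2},4}\subset M_{\frac{\delta}{4},8}\subset M_{\frac{\delta}{16},16}$ makes this automatic.
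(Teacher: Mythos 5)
Your proof is correct and follows the same skeleton as the paper's: integrate the one-sided Laplacian bounds from Lemma~\ref{laplacian upper bound} in time to get $h_t^-\le d^-+C(td_0^{-1}+t^{1-n/(4p)})$ and $h_t^+\ge d^+-C(td_0^{-1}+t^{1-n/(4p)})$ on the nested region, then fill in the missing halves of the two-sided estimate. The one place you deviate is in the closing step: the paper writes $h_t^--d^-=(h_t^+-d^+)+(e_t-e)$ and invokes the bound $|e_t(x)|\le C(e(x)+td_0^{-1}+t^{1-n/(4p)})$ from the preceding lemma (which itself rests on the Li--Yau estimate and the parabolic Harnack inequality); you instead use only the elementary fact $e_t\ge 0$, obtained from the maximum principle applied to $e_0=\phi e\ge 0$, and the identity $d^-=d^++e$. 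Your route is a genuine (if modest) simplification --- it makes the lemma self-contained modulo Lemma~\ref{laplacian upper bound}, whereas the paper's version carries an unnecessary dependency on the stronger $|e_t|$ estimate. Both give the same constants and the same region bookkeeping, and your part (ii) adaptation (swapping the exponent $1-\frac{n}{4p}$ for $1-\frac{n-\lambda}{2q}$ and checking integrability of $s^{-(n-\lambda)/(2q)}$ via $\lambda>n-2q$) is exactly what the paper intends.
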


Part i) above is (2.41) in \cite{TZz:1}, and the proof of part ii) follows similarly. Recall from \cite{CoNa} that an $\e$-geodesic connecting $O^+$ and $O^-$ is a unit speed curve $\sigma$ such that $\left||\sigma|-d_0\right|\leq \e^2d_0$. Moreover, one has

\begin{lemma}{(Colding-Naber \cite{CoNa})} \label{egeodesic}\\
1) Let $\sigma$ be an $\e$-geodesic connecting $O^+$ and $O^-$. Then for any $z\in \sigma$, we have $e(z)\leq \e^2d_0$.\\
2) Let $x\in M$ such that $e(x)\leq \e^2d_0$. Then there exists an $\e$-geodesic $\sigma$ such that $x\in \sigma$.
\end{lemma}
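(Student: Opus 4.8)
The plan is to obtain both assertions of Lemma~\ref{egeodesic} directly from the triangle inequality; no curvature hypothesis, volume bound, or Li--Yau estimate enters. The only geometric input is that $\M$, being compact, is complete, so that any two of its points are joined by a minimizing geodesic.

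For part 1), I would parametrize the $\e$-geodesic $\sigma$ by arc length on an interval $[0,L]$ so that $\sigma(0)=O^+$, $\sigma(L)=O^-$, and $L=|\sigma|$ with $|L-d_0|\le\e^2 d_0$. Given a point $z=\sigma(s)$ with $s\in[0,L]$, the restrictions $\sigma|_{[0,s]}$ and $\sigma|_{[s,L]}$ are curves from $z$ to $O^+$ and to $O^-$ of lengths $s$ and $L-s$ respectively, so $d(O^+,z)\le s$ and $d(O^-,z)\le L-s$. Adding these two bounds and subtracting $d_0$ gives $e(z)=d(O^-,z)+d(O^+,z)-d_0\le L-d_0\le\e^2 d_0$; note also that $e(z)\ge0$ by the triangle inequality, so in fact $0\le e(z)\le\e^2 d_0$.

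For part 2), given $x$ with $e(x)\le\e^2 d_0$, which is equivalent to $d(O^+,x)+d(O^-,x)=d_0+e(x)\le(1+\e^2)d_0$, I would concatenate a minimizing geodesic from $O^+$ to $x$ with a minimizing geodesic from $x$ to $O^-$, and reparametrize the resulting broken geodesic by arc length. This produces a unit-speed curve $\sigma$ that joins $O^+$ to $O^-$, passes through $x$, and has length $|\sigma|=d(O^+,x)+d(x,O^-)=d_0+e(x)$, hence $|\sigma|\in[d_0,(1+\e^2)d_0]$. Therefore $\left||\sigma|-d_0\right|=e(x)\le\e^2 d_0$, so $\sigma$ is an $\e$-geodesic containing $x$, as required.

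There is no genuine obstacle here: the lemma is little more than a reformulation of the triangle inequality. The only points deserving a moment's care are the arc-length bookkeeping in part 1) (making sure the sub-arc lengths add up to $L$) and, in part 2), the observation that a concatenation of two minimizing segments still qualifies as a ``unit speed curve'' in the sense used above, which it does after reparametrization since the definition imposes no smoothness requirement at interior points.
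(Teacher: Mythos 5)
Your proof is correct. The paper itself does not supply an argument for this lemma---it is quoted verbatim (as Lemma 2.2) from Colding--Naber \cite{CoNa}, where it is likewise treated as an elementary consequence of the triangle inequality---and your argument is exactly that standard one: for part 1) the two sub-arcs of $\sigma$ give $d(O^+,z)+d(O^-,z)\le L\le(1+\e^2)d_0$, and for part 2) the concatenation of the two minimizing segments through $x$ has length precisely $d_0+e(x)$ and hence defect $e(x)\le\e^2 d_0$. Your side remarks (that $e\ge 0$ always, and that the broken geodesic reparametrized by arc length is still a unit-speed curve in the sense of the definition, which imposes no smoothness) are exactly the right points to flag.
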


From Lemma \ref{c0approximation} and Lemma \ref{egeodesic}, we immediately have

\begin{corollary}\label{c0egeodesic}
For any $\e$-geodesic $\sigma$ connecting $O^+$ and $O^-$, any $x\in \sigma\bigcap M_{\delta/2, 4}$, and $0<\e\leq\overline{\e}$, we have\\
i) when A1 and A2 are satisfied, $$\left|h^{\pm}_{d^2_{\e}}-d^{\pm}\right|\leq C(\e^2d_0+\e^{2-\frac{n}{2p}}d_0^{2-\frac{n}{2p}}).$$
ii) when A1, A3, and A4 are satisfied, for $q>0$ and $\lambda>n-2q$ we have
$$\left|h^{\pm}_{d^2_{\e}}-d^{\pm}\right|\leq C(\e^2d_0+||\psi^{\pm}||_{K^{q,\lambda}} \e^{2-\frac{n-\lambda}{q}}d_0^{2-\frac{n-\lambda}{q}}).$$
Here $\overline{\e}$ is the constant in Lemma \ref{excess}.
\end{corollary}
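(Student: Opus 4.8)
The plan is to obtain Corollary~\ref{c0egeodesic} as the specialization of the $C^0$-approximation estimate in Lemma~\ref{c0approximation} to the distinguished time $t=d_\e^2=\e^2 d_0^2$, using that points on an $\e$-geodesic have small excess.

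First I would fix an $\e$-geodesic $\sigma$ joining $O^+$ and $O^-$, a point $x\in\sigma\cap M_{\delta/2,4}$, and $\e\in(0,\overline{\e}]$. Part 1) of Lemma~\ref{egeodesic} immediately gives the excess bound $e(x)\le\e^2 d_0$; this is the only role played by the $\e$-geodesic hypothesis. Moreover $\e\le\overline{\e}$ forces $t=\e^2 d_0^2\le\overline{\e}^2 d_0^2$, so $t$ lies in the range for which Lemma~\ref{c0approximation} (through the estimates it is built on) is valid, and $x\in M_{\delta/2,4}$ is exactly the region where that lemma applies.

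Under A1 and A2, evaluating Lemma~\ref{c0approximation}~i) at $t=\e^2 d_0^2$ gives
\[
\left|h^{\pm}_{\e^2 d_0^2}(x)-d^{\pm}(x)\right|\le C\left(e(x)+\e^2 d_0^2\cdot d_0^{-1}+\left(\e^2 d_0^2\right)^{1-\frac{n}{4p}}\right).
\]
Then I would insert $e(x)\le\e^2 d_0$, simplify $\e^2 d_0^2\cdot d_0^{-1}=\e^2 d_0$ and $\left(\e^2 d_0^2\right)^{1-\frac{n}{4p}}=\e^{2-\frac{n}{2p}} d_0^{2-\frac{n}{2p}}$, and absorb the two copies of $\e^2 d_0$ into one constant $C=C(n,p,\k,\Lambda,\delta)$; this yields conclusion i). Conclusion ii) follows in the same way from Lemma~\ref{c0approximation}~ii): at $t=\e^2 d_0^2$ the Kato error term $\|\psi^{\pm}\|_{K^{q,\lambda}} t^{1-\frac{n-\lambda}{2q}}$ becomes $\|\psi^{\pm}\|_{K^{q,\lambda}}\,\e^{2-\frac{n-\lambda}{q}} d_0^{2-\frac{n-\lambda}{q}}$, and combining with $e(x)\le\e^2 d_0$ and $t d_0^{-1}=\e^2 d_0$ produces the stated bound with $C=C(n,q,\lambda,\k,\Gamma,\delta)$.

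I expect essentially no obstacle here: the substance is entirely contained in Lemmas~\ref{c0approximation} and~\ref{egeodesic}, and this corollary is only their combination at the scale $t=\e^2 d_0^2$. The only points demanding a little care are verifying that this time is admissible (forced by $\e\le\overline{\e}$) and keeping track of the constant dependencies in the two cases.
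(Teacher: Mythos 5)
Your proposal is correct and follows exactly the paper's route: the paper states this corollary as an immediate consequence of Lemma~\ref{c0approximation} and Lemma~\ref{egeodesic}, which is precisely the substitution $t=\e^2d_0^2$ together with the excess bound $e(x)\le\e^2 d_0$ that you carry out. Your algebraic simplifications of the exponents and the admissibility check $t\le\overline{\e}^2 d_0^2$ are all accurate.
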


To prove that $h^{\pm}_t$ are $L^1$ close to $d^{\pm}$, we first need the following lemma.

\begin{lemma}
For any $x\in M_{\frac{\delta}{2}, 4}$, we have
$$\int_{M_{\frac{\delta}{16}, 16}\setminus M_{\frac{\delta}{4},8}} G(y,t;x,0)dy\leq \frac{C}{d_0^2}t.$$
\end{lemma}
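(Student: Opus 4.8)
The plan is to exploit the fact that the cutoff function $\phi$ that defines $e_0$ (and the distance functions $h_0^{\pm}$) is supported in $M_{\delta/16,16}$, is identically $1$ on $M_{\delta/4,8}$, and satisfies $|\Delta\phi|+|\nabla\phi|^2\le C/d_0^2$. Consequently $\Delta\phi$ — and more precisely the heat content that leaks into the annular region $M_{\delta/16,16}\setminus M_{\delta/4,8}$ — is controlled by $C/d_0^2$ times the heat kernel mass. The key auxiliary object to introduce is a smooth function $\chi$ with $\chi\equiv 1$ on $M_{\delta/16,16}\setminus M_{\delta/4,8}$, $\chi$ supported slightly inside a neighborhood of that set (say in $M_{\delta/32,24}$), and $|\Delta\chi|+|\nabla\chi|^2\le C/d_0^2$, which exists by Lemma~\ref{cutoffannulus} (with $E=\{O^+,O^-\}$, using that all the relevant radii are comparable to $d_0\le 1$).

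First I would write, for fixed $x\in M_{\delta/2,4}$,
\[
\int_{M_{\frac{\delta}{16},16}\setminus M_{\frac{\delta}{4},8}} G(y,t;x,0)\,dy
\;\le\; \int_\M \chi(y)\,G(y,t;x,0)\,dy \;=:\; v(x,t),
\]
and observe that $v(\cdot,0)=\chi(x)=0$ on $M_{\delta/2,4}$, since $x\in M_{\delta/2,4}$ stays well away from the support of $\chi$ (here one uses that $\delta/2>\delta/16$ and $4<8$ with definite gaps scaled by $d_0$; if one wants to be careful, choose the transition radii of $\chi$ to keep it zero on all of $M_{\delta/2,4}$). Next, $v$ solves $(\partial_t-\Delta)v = (\Delta\chi)\ast$... more precisely $v(x,t)=\int \chi(y)G(y,t;x,0)dy$ satisfies, via Duhamel applied with the operator $\Delta_x$ acting on the heat kernel and integration by parts in $y$,
\[
v(x,t)=\chi(x)+\int_0^t\!\!\int_\M (\Delta\chi)(y)\,G(y,t-s;x,0)\,dy\,ds
=\int_0^t\!\!\int_\M (\Delta\chi)(y)\,G(y,t-s;x,0)\,dy\,ds,
\]
using $\chi(x)=0$. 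Then $|\Delta\chi|\le C/d_0^2$ and $\int_\M G(y,\tau;x,0)\,dy\le 1$ (stochastic completeness of the heat semigroup on a compact manifold) give
\[
v(x,t)\le \frac{C}{d_0^2}\int_0^t\!\!\int_\M G(y,t-s;x,0)\,dy\,ds\le \frac{C}{d_0^2}\,t,
\]
which is exactly the claimed bound.

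The main obstacle — really the only delicate point — is making sure the auxiliary cutoff $\chi$ can be taken to vanish identically on $M_{\delta/2,4}$ while still equalling $1$ on the target annular shell $M_{\delta/16,16}\setminus M_{\delta/4,8}$ and carrying the Laplacian bound $C/d_0^2$; this is a purely geometric placement of transition radii, and it works because the relevant radii $\tfrac{\delta}{16}d_0,\tfrac{\delta}{8}d_0,\dots,16d_0$ are all mutually comparable (fixed ratios), so Lemma~\ref{cutoffannulus}, applied to the two sub-annuli $A_{\frac{\delta}{16}d_0,\frac{\delta}{4}d_0}$ and $A_{8d_0,16d_0}$ separately (or to the complement) and then summed, produces such a $\chi$ with constants depending only on $n,p,\k,\Lambda,\delta$ (and likewise in the $K^{2,n-2}$ setting). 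A secondary technical point is justifying the Duhamel/integration-by-parts identity for $v$; on a compact manifold with smooth data this is standard, since $G(y,t-s;x,0)$ is smooth for $s<t$ and the spatial integration by parts in $y$ has no boundary term. Everything else is the trivial estimate $\int G\le 1$ and the time integration.
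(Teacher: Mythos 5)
Your proposal is correct and uses essentially the same mechanism as the paper: both proofs exploit $\left|\frac{d}{dt}\int_\M \phi\, G\, dy\right| = \left|\int_\M (\Delta\phi)\, G\, dy\right|\le \|\Delta\phi\|_\infty \le C/d_0^2$ for a suitable cutoff $\phi$ and then integrate in time. The only cosmetic difference is that you build a single cutoff $\chi$ supported near the target shell $M_{\delta/16,16}\setminus M_{\delta/4,8}$ and vanishing at $x$, whereas the paper takes two nested cutoffs $\phi_k$ equal to $1$ at $x$ and subtracts an upper bound for $\int_{M_{\delta/16,32}}G$ from a lower bound for $\int_{M_{\delta/4,8}}G$; both variants rely on the same gap in radii to place transition regions.
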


\noindent {\bf Proof.}
Let $k$ be a positive integer, and $\phi_k\geq0$ a cut-off function such that $\phi_k=1$ in $M_{\frac{\delta}{k},2k}$, $\phi_k=0$ outside of $M_{\frac{\delta}{2k},4k}$, and $|\Delta \phi_k|+|\nabla \phi_k|^2\leq \frac{C}{d_0^2}$.
Then, for any $x$, we have
\begin{align*}
\left|\frac{d}{dt}\int_\M \phi_k(y)G(y,t;x,0)dy\right|&=\left|\int_\M \phi_k(y)\Delta_yG(y,t;x,0)dy\right|\\
&=\left|\int_\M \Delta\phi_k(y) G(y,t;x,0)dy\right|\\
&\leq \frac{C}{d_0^2}.
\end{align*}
Thus, we have
$$-\frac{C}{d_0^2}t\leq -\phi_k(x)+\int_{M} \phi_k(y)G(y,t;x,0)dy\leq \frac{C}{d_0^2}t.$$
It follows that for $x\in M_{\frac{\delta}{2}, 4}$,
$$\int_{M_{\frac{\delta}{16}, 32}}G(y,t;x,0)dy\leq 1+\frac{C}{d_0^2}t,$$
and
$$\int_{M_{\frac{\delta}{4}, 8}}G(y,t;x,0)dy\geq 1-\frac{C}{d_0^2}t.$$
Therefore,
$$\int_{M_{\frac{\delta}{16}, 16}\setminus M_{\frac{\delta}{4},8}} G(y,t;x,0)dy\leq \frac{C}{d_0^2}t.$$
\qed

By using the above lemma and following the proof of Lemma 2.17 in \cite{CoNa}, we can get

\begin{lemma}\label{c1approximation}
For any $x\in M_{\frac{\delta}{2}, 4}$, and $0<t<\overline{\e}^2d_0^2$\\

i) if A1 and A2 are satisfied, then we have
\begin{align*}
|\nabla h^{\pm}_t|^2(x)\leq 1+\frac{C}{d_0^2}t+Ct^{1-\frac{n}{2p}}.
\end{align*}

ii) if A1, A3, and A4 are satisfied, then
\begin{align*}
|\nabla h^{\pm}_t|^2(x)\leq 1+\frac{C}{d_0^2}t+C\sqrt{t}.
\end{align*}
Here $\overline{\e}$ is the constant in Lemma \ref{excess}.
\end{lemma}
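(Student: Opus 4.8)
The plan is to run a Bochner--Duhamel argument for $w:=|\nabla h^{\pm}_t|^2$ (the sign $\pm$ is fixed once and for all, and the two cases are proved identically), paying for the region where the cutoff $\phi$ fails to be $1$ by means of the preceding lemma, and closing the estimate with a weakly singular Gr\"onwall inequality. The same scheme handles both (i) and (ii); only the decay rate of an auxiliary heat--kernel integral changes, $\tau^{-n/2p}$ versus $\tau^{-1/2}$.

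First I would record the Bochner identity along the heat flow: since $(\partial_t-\Delta)h^{\pm}_t=0$,
\[
(\partial_t-\Delta)w=-2\,|\nabla^2 h^{\pm}_t|^2-2\,\Ric(\nabla h^{\pm}_t,\nabla h^{\pm}_t)\ \le\ 2\,|Ric^-|\,w .
\]
Discarding the nonnegative Hessian term and invoking the parabolic maximum principle --- equivalently, Duhamel's formula with the heat kernel $G$, which on the compact $\M$ conserves mass, $\int_\M G(x,t;y,0)\,dy=1$ --- one gets, for $0<t<\overline{\e}^2 d_0^2$,
\[
w(x,t)\ \le\ \int_\M G(x,t;y,0)\,w_0(y)\,dy+2\int_0^t\int_\M G(x,t-s;y,0)\,|Ric^-|(y)\,w(y,s)\,dy\,ds ,
\]
where $w_0=|\nabla h^{\pm}_0|^2=|\nabla(\phi d^{\pm})|^2$.

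To estimate the first term I would use the explicit form $h^{\pm}_0=\phi d^{\pm}$: on $M_{\delta/4,8}$ one has $\phi\equiv1$, hence $w_0=|\nabla d^{\pm}|^2=1$; on $M_{\delta/16,16}\setminus M_{\delta/4,8}$ the bounds $|\nabla\phi|^2\le C/d_0^2$ and $d^{\pm}\le Cd_0$ give $w_0\le C$; and $w_0\equiv0$ off $M_{\delta/16,16}$. Combining this with $\int_\M G=1$ and the preceding lemma, which bounds $\int_{M_{\delta/16,16}\setminus M_{\delta/4,8}}G(x,t;y,0)\,dy$ by $Ct/d_0^2$ for $x\in M_{\delta/2,4}$, yields $\int_\M G(x,t;y,0)\,w_0(y)\,dy\le 1+Ct/d_0^2$. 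For the heat--kernel factor in the second term I would use H\"older's inequality with the Gaussian upper bound for $G$ (Lemma \ref{HK} under A1 and A2, the Gaussian bound assumed in A3 otherwise) and the volume estimate $|B_r(x)|\le Cr^n$: under A2, $\int_\M G(x,\tau;y,0)|Ric^-|(y)\,dy\le\|Ric^-\|_{L^p}\,\|G(x,\tau;\cdot,0)\|_{L^{p/(p-1)}}\le C\tau^{-n/2p}$, while under A3, by Cauchy--Schwarz and $\|Ric^-\|_{K^{2,n-2}}\le\Gamma$, $\int_\M G(x,\tau;y,0)|Ric^-|(y)\,dy\le C\tau^{-1/2}$.

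Now set $\gamma:=n/2p$ in case (i) and $\gamma:=1/2$ in case (ii), both in $(0,1)$, and let $\mu(t):=\sup_\M w(\cdot,t)$, which is finite for $t>0$ by parabolic regularity and stays bounded as $t\to0$ (this a priori control, in the absence of a lower Ricci bound, is supplied by the $L^p$, resp.\ Kato, subsolution estimates already in use, or by mollifying $h^{\pm}_0$ and passing to the limit). The second displayed inequality together with $w(y,s)\le\mu(s)$ and the kernel bound gives $w(x,t)\le C+2C\int_0^t(t-s)^{-\gamma}\mu(s)\,ds$ for every $x\in\M$, hence $\mu(t)\le C+2C\int_0^t(t-s)^{-\gamma}\mu(s)\,ds$, and the generalized Gr\"onwall inequality for weakly singular kernels gives $\mu(t)\le C$ for all $t<\overline{\e}^2 d_0^2\le1$. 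Feeding $w(y,s)\le\mu(s)\le C$ back into the second term and using the sharp bound $1+Ct/d_0^2$ on the first term for $x\in M_{\delta/2,4}$, I obtain
\[
w(x,t)\ \le\ 1+\frac{C}{d_0^2}\,t+2C\int_0^t(t-s)^{-\gamma}\,ds\ =\ 1+\frac{C}{d_0^2}\,t+\frac{2C}{1-\gamma}\,t^{1-\gamma},
\]
which is (i) when $\gamma=n/2p$ and (ii) when $\gamma=1/2$. The step requiring care is keeping the leading coefficient exactly $1$: one must split the heat mass into the portion landing in $\{\phi\equiv1\}$, which contributes precisely $1$ by conservation of mass, and the remainder, absorbed into $Ct/d_0^2$ via the preceding lemma --- a crude multiplicative bound on $w$ in terms of $\int_\M G\,w_0$ would spoil the constant. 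The a priori finiteness of $\mu$ near $t=0$ also deserves a remark, but it follows from the subsolution structure; the singular Gr\"onwall step itself is standard.
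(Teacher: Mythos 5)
Your argument is essentially the paper's own: a Bochner inequality for $w=|\nabla h^{\pm}_t|^2$, Duhamel's formula, the same splitting of $\int G\,w_0$ into the $\{\phi\equiv1\}$ part (contributing exactly $1$ by conservation of mass) and the remainder controlled by the preceding lemma giving $Ct/d_0^2$, and the same H\"older/Kato estimates for $\int G\,|Ric^-|$. The only cosmetic difference is in closing the bootstrap: you invoke a weakly singular Gr\"onwall inequality for $\mu(t)=\sup_\M w(\cdot,t)$, whereas the paper simply absorbs $\sup_{\M\times[0,t]}w\cdot Ct^{1-n/2p}$ into the left side using $Ct^{1-n/2p}\le 1/2$ for $t\le\overline\e^2d_0^2$ with $\overline\e$ small; both are valid, and your remark about the a priori finiteness of $\mu$ near $t=0$ is a reasonable point the paper leaves implicit.
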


The above $C^1$ bound of $h^{\pm}_t$ can be applied to show that

\begin{lemma}\label{L1approximation}

i) If A1 and A2 are satisfied, then there exists a constant $C=C(n,p,\k,\Lambda,\delta)$, such that for any $0<\e\leq\overline{\e}$ and $0<\sqrt{t}<\e^2d_0^2$, we have\\
(1) If $x\in M_{\delta, 2}$, and $e(x)\leq \e^2d_0$, then $\displaystyle \oint_{B_{10\sqrt{t}}(x)}\left||\nabla h^{\pm}_{t}|^2(y)-1\right|dy\leq Ct^{-\frac{1}{2}}(\e^2d_0+td_0^{-1}+t^{1-\frac{n}{4p}})$.\\
(2) If $\sigma$ is an $\e$-geodesic connecting $O^+$ and $O^-$, then
$$\oint_{\delta d_0}^{(1-\delta)d_0}\oint_{B_{10\sqrt{t}}(\sigma(s))}\left||\nabla h^{\pm}_{t}|^2(y)-1\right|dyds\leq C(\e^2d_0+td_0^{-1}+t^{1-\frac{n}{2p}}d_0^{\frac{n}{2p}}).$$

ii) If A1, A3, and A4 are satisfied, then for any $q>0$ and $\lambda>n-2q$, there exists a constant $C=C(n,q,\lambda,\k,\Gamma,\delta)$, such that for any $0<\e\leq\overline{\e}$ and $0<\sqrt{t}<\e^2d_0^2$, we have\\
(1') If $x\in M_{\delta, 2}$, and $e(x)\leq \e^2d_0$, then $$\displaystyle \oint_{B_{10\sqrt{t}}(x)}\left||\nabla h^{\pm}_{t}|^2(y)-1\right|dy\leq Ct^{-\frac{1}{2}}(\e^2d_0+td_0^{-1}+||\psi^{\pm}||_{K^{q,\lambda}}t^{1-\frac{n-\lambda}{2q}}).$$
(2') If $\sigma$ is an $\e$-geodesic connecting $O^+$ and $O^-$, then
$$\oint_{\delta d_0}^{(1-\delta)d_0}\oint_{B_{10\sqrt{t}}(\sigma(s))}\left||\nabla h^{\pm}_{t}|^2(y)-1\right|dyds\leq C(\e^2d_0+td_0^{-1}+\sqrt{t}d_0+||\psi^{\pm}||_{K^{q,\lambda}}t^{1-\frac{n-\lambda}{2q}}).$$
Here $\overline{\e}$ is the constant in Lemma \ref{excess}.
\end{lemma}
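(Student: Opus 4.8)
The plan is to follow the scheme of Colding--Naber \cite{CoNa}, with our Li--Yau bound \eqref{eqLY2} — already absorbed into the pointwise estimates for $e_t$, $\nabla e_t$, $\partial_t e_t$ proved above and into Lemma~\ref{c1approximation} — playing the role that the classical Li--Yau estimate plays there. As in the other lemmas of this section we give the argument only under $A1$ and $A2$, i.e.\ for (i); (ii) follows by the same steps with each $L^p$/$L^{2p}$ bound replaced by its $K^{q,\lambda}$ analogue, via the second alternatives in Lemmas~\ref{parabolicL1harnack} and \ref{HK}. \textbf{Step 1 (set-up and reduction).} Since $\overline{\e}$ is small and $d_0\le 1$, in the stated range of $t$ one has $B_{20\sqrt t}(x)\subset M_{\delta/2,4}$, so all the lemmas of this section apply on $B_{20\sqrt t}(x)$; in particular $d^+>0$ there (because $e(x)\le\e^2 d_0$). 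By Lemma~\ref{c1approximation} the pointwise upper bound $|\nabla h^\pm_t|^2\le 1+\Lambda_1$ holds on this ball, with $\Lambda_1:=\tfrac{C}{d_0^2}t+Ct^{1-n/(2p)}$, and $\Lambda_1$ is itself dominated by the right-hand side of the inequality in (1). Decomposing $\nabla h^\pm_t$ into its component along $\nabla d^\pm$ (of unit length a.e.) and the orthogonal complement gives, pointwise, $|\nabla h^\pm_t|^2-1\le\Lambda_1$ and $\big(1-|\nabla h^\pm_t|^2\big)_+\le 2\big(1-\langle\nabla h^\pm_t,\nabla d^\pm\rangle\big)_+$, whence
\[
\oint_{B_{10\sqrt t}(x)}\big||\nabla h^\pm_t|^2-1\big|\ \le\ 3\Lambda_1\ +\ 4\Big(1-\oint_{B_{10\sqrt t}(x)}\langle\nabla h^\pm_t,\nabla d^\pm\rangle\Big)_+ .
\]
So (1) is reduced to a \emph{lower} bound for the average of $\langle\nabla h^\pm_t,\nabla d^\pm\rangle$ over $B_{10\sqrt t}(x)$; here the Cauchy--Schwarz inequality $\oint_B|\nabla h^\pm_t|^2\ge\big(\oint_B\langle\nabla h^\pm_t,\nabla d^\pm\rangle\big)^2$ may be used as well.

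\textbf{Step 2 (the lower bound, following \cite{CoNa}).} The lower bound for $\oint_{B_{10\sqrt t}(x)}\langle\nabla h^\pm_t,\nabla d^\pm\rangle$ is not accessible by a plain integration by parts, since the Laplacians of $d^\pm$ and of $h^\pm_t$ are controlled only from one side. Instead one compares $h^\pm_t$ with $d^\pm$ as in \cite{CoNa}: foliate $B_{20\sqrt t}(x)$ (enlarged, if needed, to the relevant geodesic cone) by the integral curves of $\nabla d^-$ — the minimal geodesics from $O^-$ — and disintegrate the integral along this foliation. Along each leaf, parametrized by arclength $\tau=d^-$, one has $\langle\nabla h^-_t,\nabla d^-\rangle=\partial_\tau h^-_t$, so the fundamental theorem of calculus expresses the leafwise (cut-off-weighted) integral in terms of values of $h^-_t$, which by the $C^0$-closeness $h^\pm_t\approx d^\pm$ — supplied by Lemma~\ref{c0approximation}, the Abresch--Gromoll type excess estimate of Lemma~\ref{excess}, and the estimates on $e_t,\nabla e_t$ proved above (the point at which \eqref{eqLY2} enters) — differ from the corresponding lengths by errors of the size appearing in (1). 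The Jacobian of the foliation (the spreading of the geodesics from $O^-$) is controlled by the barrier Laplacian comparison $\Delta d^-\le\frac{n-1}{d^-}+\psi^-$ of Lemma~\ref{laplacian upper bound} together with $\int_{B_r}|\psi^-|^{2p}\le C\int_{B_r}|Ric^-|^p$, the Petersen--Wei volume comparison (Theorem~\ref{volumecomparison}) and the doubling \eqref{Lpvolumedoubling}; the curvature error terms are absorbed using the Gaussian bounds of Lemma~\ref{HK}. This yields the desired lower bound for $\langle\nabla h^-_t,\nabla d^-\rangle$; the same with $O^+$ in place of $O^-$ gives it for $\langle\nabla h^+_t,\nabla d^+\rangle$, proving (1).

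\textbf{Step 3 (part (2): integration along the $\e$-geodesic).} Let $\sigma$ join $O^+$ and $O^-$. By Lemma~\ref{egeodesic}(1), $e(\sigma(s))\le\e^2 d_0$ for all $s$, so by Lemma~\ref{excess} the excess is small on the whole tube $T:=\bigcup_{s\in[\delta d_0,(1-\delta)d_0]}B_{10\sqrt t}(\sigma(s))$, and $\langle\nabla h^\pm_t,\sigma'\rangle$ may there be compared with $\langle\nabla h^\pm_t,\nabla d^\pm\rangle$; also $\sigma(s)\in M_{\delta/2,2}$ for $s\in[\delta d_0,(1-\delta)d_0]$. Integrating the bound of (1) in $s$ would retain the factor $t^{-1/2}$; the sharper bound of (2) follows, as in \cite{CoNa}, by running the argument of Step~2 directly on $T$: foliating $T$ by the integral curves of $\nabla d^-$, each leaf now has $d^-$-range of length $\approx(1-2\delta)d_0$, so the leafwise fundamental theorem of calculus collapses $\int\langle\nabla h^-_t,\nabla d^-\rangle$ to a difference of values of $h^-_t$ near the two ends of $\sigma$ — controlled by the $C^0$-closeness and the length defect $\big||\sigma|-d_0\big|\le\e^2 d_0$ — and the error is divided by the full length $\approx d_0$ rather than by $\sqrt t$. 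Combining this with the pointwise upper bound $|\nabla h^\pm_t|^2\le 1+\Lambda_1$, Lemma~\ref{HK} and \eqref{Lpvolumedoubling} gives (2).

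\textbf{Main obstacle.} The crux is Step~2, equivalently its tube version in Step~3: the lower bound is not reachable by integration by parts and genuinely needs the Colding--Naber comparison with the distance function and the geodesic/co-area disintegration. The delicate points are that $\Delta d^\pm$ must be handled only in the barrier sense, that one must balance the scale $d_0$ (on which $d^\pm$, $\Delta d^\pm$, $e$ and the tube live) against the scale $\sqrt t$ (on which the cut-off and the balls $B_{10\sqrt t}$ live) — using $e(x)\le\e^2 d_0$ and the excess estimate of Lemma~\ref{excess} to keep every error at the level of the right-hand sides of (1) and (2) — and that the integral curvature enters only through $\|\psi^\pm\|_{L^{2p}}$, which must be fed into the volume comparison and the Gaussian bounds at each step.
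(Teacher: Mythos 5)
Your Step 1 reduction is fine (the paper packages it slightly differently, working directly with the nonnegative quantity $w_t=1+Ctd_0^{-2}+Ct^{1-n/(2p)}-|\nabla h^-_t|^2\ge 0$ supplied by Lemma~\ref{c1approximation}, rather than splitting into positive and negative parts), and Step 3's reuse of the argument at scale $d_0$ is also the right idea. The gap is in Step~2. You correctly observe that a plain integration by parts fails because $\Delta d^\pm$ is controlled only from one side — but your proposed substitute, disintegrating the ball average $\oint_{B_{10\sqrt t}(x)}\langle\nabla h^-_t,\nabla d^-\rangle$ over the geodesic foliation emanating from $O^-$, runs into exactly the same obstruction. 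After the fundamental theorem of calculus along each leaf you are left multiplying by the Jacobian $\mathcal{A}(\tau,\theta)$ of the foliation, whose $\tau$-derivative is $(\Delta d^-)\mathcal{A}$; to extract a sharp lower bound for the average you would need two-sided control of $\mathcal{A}$ across the whole ball, which the barrier inequality $\Delta d^-\le (n-1)/d^- +\psi^-$ and the Petersen--Wei volume comparison do not give at the required precision. So the co-area/foliation route does not close.

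What the paper (following Colding--Naber) actually does at this step is different and avoids the Jacobian entirely. One checks that $\phi^2 w_t$ is a supersolution of the heat equation up to a controlled inhomogeneity ($\ge -C d_0^{-2}(1+\Lambda_1)-|Ric^-|(1+\Lambda_1)$), and then invokes the parabolic $L^1$-Harnack inequality (Lemma~\ref{parabolicL1harnack}, second form) to get
\[
\oint_{B_{10\sqrt t}(x)} w_t \;\le\; C\Big(\inf_{B_{10\sqrt t}(x)} w_{2t} \;+\; \text{error}\Big).
\]
This converts the ball average into the value at \emph{one} point, so it now suffices to produce a single point $y\in B_{10\sqrt t}(x)$ where $w_{2t}(y)$ is small. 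That is done by a \emph{one-dimensional} line integral: take the $\e$-geodesic $\sigma$ through $x$ (possible since $e(x)\le\e^2 d_0$), integrate $w_{2t}$ over the segment of $\sigma$ of length $10\sqrt t$ ending at $x$, use $|\nabla_{\dot\sigma} h^-_{2t}|\le|\nabla h^-_{2t}|$, Cauchy--Schwarz, FTC, and the $C^0$-closeness of Lemma~\ref{c0approximation} to bound the line integral by $C(\e^2 d_0+td_0^{-1}+t^{1-n/(4p)})$, hence some $y$ on the segment has $w_{2t}(y)\le Ct^{-1/2}(\cdots)$. The $L^1$-Harnack is the missing idea that makes a single leaf suffice; you mention Lemma~\ref{parabolicL1harnack} only in passing (for the $K^{q,\lambda}$ replacement) and it does not appear in your core argument, which is where the proof actually lives. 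Your Step 3 should be adjusted the same way: integrate $w_{2t}$ over $\sigma|_{[\delta d_0,(1-\delta)d_0]}$ and then feed the pointwise inf bounds into the $L^1$-Harnack for each ball $B_{10\sqrt t}(\sigma(s))$.
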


The proof of the above Lemma is similar to Lemma 2.18 in \cite{CoNa}. Now we are ready to give a\\

\hspace{-.5cm}{\bf Proof of Theorem \ref{distance approximation}:}
Estimates (1), (2), and (3) are contained in Lemmas \ref{c0approximation} and \ref{L1approximation}, respectively. In the following, we prove (4).

For any $\sigma(s)$, let $\eta(x)\geq0$ be the cut-off function satisfying $\eta=1$ in $B_{d_{\e}}(\sigma(s))$,$\eta=0$ outside of $B_{3d_{\e}}(\sigma(s))$, and $|\Delta\eta|+|\nabla\eta|^2\leq \frac{C}{d_{\e}^2}$, where $d_{\e}=\e d_0$.

Let $a(t)$ be a smooth function in time such that $0\leq a(t)\leq 1$, $a(t)=1$ for $t\in[\frac{1}{2}d_{\e}^2, 2d_{\e}^2]$, $a(t)=0$ for $t\not\in[\frac{1}{4}d^2_{\e}, 4d^2_{\e}]$, and $|a^{\prime}(t)|\leq\frac{C}{d^2_{\e}}$.

Recall that $$(\frac{\partial}{\partial t}-\Delta)\left(|\nabla h^{\pm}_t|^2-1\right)=-2|\nabla^2 h^{\pm}_{t}|^2-2R_{ij}\nabla_i h^{\pm}_t\nabla_j h^{\pm}_t.$$
Hence, we have
\begin{align*}
&2\int_\M  a(t)\eta|\nabla^2 h^{\pm}_t|^2\\
=&\int_\M  a(t)\eta\Delta(|\nabla h^{\pm}_t|^2-1)-2\int_\M  a(t)\eta R_{ij}\nabla_i h^{\pm}_t\nabla_j h^{\pm}_t-\int_\M  a(t)\eta\frac{\partial}{\partial t}(|\nabla h^{\pm}_t|^2-1)\\
\leq & \frac{C}{d_{\e}^2}\int_{B_{3d_{\e}}(\sigma(s))}a(t)||\nabla h^{\pm}_t|^2-1|+2\int_{B_{3d_{\e}}(\sigma(s))}a(t)|Ric^-||\nabla h^{\pm}_t|^2-\int_\M  a(t)\eta\frac{\partial}{\partial t}(|\nabla h^{\pm}_t|^2-1).
\end{align*}
Therefore,
\begin{align*}
&\frac{1}{\v(B_{3d_{\e}}(\sigma(s)))}\int_{\frac{1}{2}d_{\e}^2}^{2d_{\e}^2}\int_{B_{d_{\e}}(\sigma(s))}|\nabla^2 h^{\pm}_t|^2 dydt\\
\leq & \frac{C}{d_{\e}^2}\int_{\frac{1}{4}d_{\e}^2}^{4d_{\e}^2}\oint_{B_{3d_{\e}}(\sigma(s))}||\nabla h^{\pm}_t|^2-1|dydt+C\int_{\frac{1}{4}d_{\e}^2}^{4d_{\e}^2}\oint_{B_{3d_{\e}}(\sigma(s))}|Ric^-|(1+\frac{C}{d_0^2}t+Ct^{1-\frac{n}{2p}})dydt\\
\end{align*}
It follows immediately from \eqref{noncollapsing} and Theorem \ref{volumecomparison} that
\begin{align*}
&\int_{\frac{1}{2}d_{\e}^2}^{2d_{\e}^2}\int_{\delta d_0}^{(1-\delta)d_0}\oint_{B_{d_{\e}}(\sigma(s))}|\nabla^2 h^{\pm}_t|^2 dydsdt\\
\leq & \frac{C}{d_{\e}^2}\int_{\frac{1}{4}d_{\e}^2}^{4d_{\e}^2}(\e^2d_0+\frac{t}{d_0}+t^{1-\frac{n}{2p}}d_0^{\frac{n}{2p}})dt+Cd_0d_{\e}^{-\frac{n}{p}}\int_{\frac{1}{4}d_{\e}^2}^{4d_{\e}^2}(1+\frac{C}{d_0^2}t+Ct^{1-\frac{n}{2p}})dt\\
\leq& Cd_0(\e^2+\e^{2-\frac{n}{p}}).
\end{align*}

Therefore, there exists a $r\in[\frac{1}{2}, 2]$ such that
\begin{equation}
\int_{\delta d_0}^{(1-\delta)d_0}\oint_{B_{d_{\e}}(\sigma(s))}|\nabla^2 h^{\pm}_{rd_{\e}^2}|^2 dyds\leq \frac{C(1+\e^{-\frac{n}{p}})}{d_0}.
\end{equation} This completes the proof of Theorem \ref{distance approximation}.
\qed\\

The proof of Theorem \ref{distance approximation 2} is done similarly.


\section*{Acknowledgements}
The paper was originally intended to be a joint one with Richard Bamler who made an important contribution.
However, Richard has withdrawn from authorship to concentrate on other projects. We sincerely thank him for his generosity.

M. Z. would like to express his great appreciation to Prof. Huai-Dong Cao for constant encouragement and support. We are grateful to Professors H.-D. Cao, X.Z. Dai, H.Z. Li,
G.F. Wei for their interest and comments on the result.

Q.S.Z. gratefully acknowledges the support of Simons' Foundation and Siyuan Foundation through Nanjing University. Meng Zhu's research is supported by National Natural Science Foundation of China Grant 11501206, and Science and Technology Commission of Shanghai Municipality (Grant No. 13dz2260400).

Note added on 2017/09/06.  After this paper was posted on the arxiv in November 2015,  C. Rose treated the
case that $|Ric|$ is in the Kato class instead of $|Ric|^2$ in Kato class as in Theorem 1.1 (b) here (see \cite{Ro}).

Finally, we are grateful to the referee for carefully checking the paper and making useful suggestions.

\end{document}